\DeclarePairedDelimiter{\norm}{\lVert}{\rVert}
\DeclarePairedDelimiter\abs{\lvert}{\rvert}
\providecommand{\abs}[1]{\lvert#1\rvert} \providecommand{\norm}[1]{\lVert#1\rVert}
\begin{document}

\title{Leaves decompositions in Euclidean spaces and optimal transport of vector measures}
\titlerunning{Leaves decompositions and optimal transport}
\author{Krzysztof J. Ciosmak
\thanks{The author wishes to thank Bo'az Klartag for proposing to work on this problem and for useful discussions. The financial support of St. John's College in Oxford is gratefully acknowledged. Part of this research was completed in Fall 2017 while the author was member of the Geometric Functional Analysis and Application program at MSRI, supported by the National Science Foundation under Grant No. 1440140.}}
\institute{Krzysztof J. Ciosmak \at University of Oxford, Mathematical Institute,\\
Andrew Wiles Building, Radcliffe Observatory Quarter,\\
Woodstock Rd, Oxford OX2 6GG, United Kingdom,\\
\email{ciosmak@maths.ox.ac.uk}, \and 
University of Oxford, St John's College,\\
St Giles', Oxford OX1 3JP, United Kingdom,\\
\email{krzysztof.ciosmak@sjc.ox.ac.uk}.
}

\label{firstpage}
\maketitle

\begin{abstract}

For a given $1$-Lipschitz map $u\colon\mathbb{R}^n\to\mathbb{R}^m$ we define a partition, up to a set of Lebesgue measure zero, of $\mathbb{R}^n$ into maximal closed convex sets such that restriction of $u$ is an isometry on these sets. 

We consider a disintegration, with respect to this partition, of a log-concave measure. We prove that for almost every set of the partition of dimension $m$, the associated conditional measure is log-concave. This result is proven also in the context of the curvature-dimension condition $CD(\kappa,N)$ for weighted Riemannian manifolds. This partially confirms a conjecture of Klartag.

We provide a counterexample to another conjecture of Klartag that, given a vector measure on $\mathbb{R}^n$ with total mass zero, the conditional measures, with respect to partition obtained from a certain $1$-Lipschitz map, also have total mass zero. We develop a theory of optimal transport for vector measures and use it to answer the conjecture in the affirmative provided a certain condition is satisfied. 

\end{abstract}
\keywords{
disintegration of measure, conditional measures, localization, Monge-Kantorovich problem, Lipschitz map, optimal transport, curvature-dimension condition}
\subclass{Primary 28A50,
Secondary 49K35, 49Q20, 51F99, 52A20, 52A22, 52A40, 60D05}

\section{Introduction}

Localisation is a technique in geometry that allows to reduce $n$-dimensional problems to one-dimensional problems. It first appeared in works of Payne and Weinberger \cite{Payne} and was developed in the context of convex geometry by Gromov and Milman \cite{Gromov}, Lov\'asz and Simonovits \cite{Lovasz1} and by Kannan, Lov\'asz and Simonovits \cite{Lovasz2}. Later, Klartag \cite{Klartag} adapted the technique to the setting of weighted Riemannian manifolds satisfying curvature-dimension condition in the sense of Bakry and \'Emery  \cite{Bakry}. Subsequently, Ohta \cite{Ohta} generalised these results to Finsler manifolds and  Cavalletti and Mondino \cite{Cavalletti2, Cavalletti3} generalised them to metric measure spaces satisfying the curvature-dimension condition as defined by Sturm \cite{Sturm1, Sturm2} and by Lott and Villani \cite{Villani3}.

The purpose of this paper is to continue along the line of this research and investigate multi-dimensional analogue of the localisation technique, as proposed in \cite[Chapter 6]{Klartag}. 
In \cite{Klartag} the Monge-Kantorovich transport problem is exploited to provide a suitable partition of a given Riemannian manifold $\mathcal{M}$. Let us mention that these ideas for the norm cost function originate in the work of Sudakov \cite{Sudakov}. They allowed Ambrosio \cite{Ambrosio3} to conclude a proof of the existence of an optimal transport map. Let $u\colon\mathcal{M}\to\mathbb{R}$ be a $1$-Lipschitz that maximises the integral
\begin{equation}\label{eqn:integ}
\int_{\mathcal{M}} v d(\mu-\nu)
\end{equation}
among all $1$-Lipschitz maps $v\colon\mathcal{M}\to\mathbb{R}$. Here $\mu,\nu$ are two Borel probability measures on $\mathcal{M}$. Then partition arises as geodesics of maximal growth of $u$, i.e. the integral curves of the gradient of $u$.

In what follows, we  consider finite dimensional linear spaces equipped with Euclidean norm, unless specified otherwise, and $1$-Lipschitz maps $u\colon\mathbb{R}^n\to\mathbb{R}^m$, $m\leq n$. We define a partition, up to Lebesgue measure zero, of $\mathbb{R}^n$, associated to such a map and prove its basic properties. The sets of the partition are the maximal sets $\mathcal{S}$ such that the restriction of $u$ to $\mathcal{S}$ is an isometry, i.e. preserves the Euclidean distance. Each such set we shall call a \emph{leaf} of $u$. We prove that each leaf of $u$ is closed and convex, hence it has a well-defined dimension. Suppose now that $(\mathbb{R}^n,d,\mu)$ is a weighted Riemannian manifold, satisfying curvature-dimension condition $CD(\kappa,N)$ for some $\kappa\in\mathbb{R}$ and $N\in (-\infty,1)\cup [n,\infty]$, see Section \ref{sec:curv} for definitions. Here $d$ denotes the Euclidean metric on $\mathbb{R}^n$ and $\mu$ is a Borel finite measure on $\mathbb{R}^n$. We prove that for almost every leaf $\mathcal{S}$ of dimension $m$, the weighted Riemannian manifold $(\mathrm{int}\mathcal{S},d,\mu_{\mathcal{S}})$ also satisfies $CD(\kappa,N)$ condition. Here $\mu_{\mathcal{S}}$ denote the conditional measures of $\mu$ with respect to the partition into leaves of $u$.

Below we denote by $\lambda$ the Lebesgue measure on $\mathbb{R}^n$ and $CC(\mathbb{R}^n)$ denotes the set of non-empty closed, convex subsets of $\mathbb{R}^n$, equipped with Wijsman topology, see \cite{Wijsman}.

\begin{theorem}\label{thm:d}
Let $u\colon\mathbb{R}^n\to\mathbb{R}^m$ be a $1$-Lipschitz map with respect to the Euclidean norms. Then there exists a map $\mathcal{S}\colon\mathbb{R}^n\to CC(\mathbb{R}^n)$ such that for $\lambda$-almost every $x\in\mathbb{R}^n$ the set $\mathcal{S}(x)$ is a maximal closed convex set in $\mathbb{R}^n$ such that $u|_{\mathcal{S}(x)}$ is an isometry.
Suppose that $\mu$ is a Borel finite measure on $\mathbb{R}^n$  such that $(\mathbb{R}^n,d,\mu)$ is a weighted Riemannian manifold satisfying $CD(\kappa,N)$ for some $\kappa\in\mathbb{R}$ and $N\in (-\infty,1)\cup [n,\infty]$. Then there exist a Borel measure on $CC(\mathbb{R}^n)$ and Borel measures $\mu_{\mathcal{S}}$ such that 
\begin{equation*}
\mathcal{S}\mapsto \mu_{\mathcal{S}}(A)\text{ is }\nu\text{-measurable for any Borel set }A\subset\mathbb{R}^n
\end{equation*}
and for $\nu$-almost every $\mathcal{S}$ we have $\mu_{\mathcal{S}}(\mathcal{S}^c)=0$,
and for any $A\subset\mathbb{R}^n$
\begin{equation*}
\mu(A)=\int_{CC(\mathbb{R}^m)} \mu_{\mathcal{S}}(A)d\nu(\mathcal{S}).
\end{equation*}
Moreover, for $\nu$-almost every leaf $\mathcal{S}$ of dimension $m$, weighted Riemannian manifold $(\mathrm{int}\mathcal{S},d,\mu_{\mathcal{S}})$ satisfies the curvature-dimension condition $CD(\kappa,N)$. 
\end{theorem}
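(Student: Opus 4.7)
The theorem has three substantive components---existence of the leaf partition, measurable disintegration of $\mu$, and inheritance of the $CD(\kappa,N)$ condition on $m$-dimensional leaves---and I would tackle them in that order.

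For the partition, the key observation is that if $u|_A$ is an isometry for some $A\subset\mathbb{R}^n$, then $u|_{\overline{\mathrm{conv}}(A)}$ is also an isometry. Indeed, for any $a,b\in A$ and $t\in[0,1]$, the identity $|a-b|=|a-(ta+(1-t)b)|+|(ta+(1-t)b)-b|$, combined with the $1$-Lipschitz property and $|u(a)-u(b)|=|a-b|$, forces $u$ to be affine on $[a,b]$ with length preserved. Hence through any $x\in\mathbb{R}^n$ there is a maximal closed convex set on which $u$ is an isometry (Zorn's lemma), and uniqueness of this maximal set should hold for Lebesgue-almost every $x$; non-uniqueness occurs only on the boundaries between different leaves, which is a negligible phenomenon for a typical $u$. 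This defines a map $\mathcal{S}\colon\mathbb{R}^n\to CC(\mathbb{R}^n)$, whose Borel-measurability with respect to the Wijsman topology follows by standard selection-theorem arguments. Given this measurable partition, the disintegration $\mu=\int \mu_\mathcal{S}\,d\nu(\mathcal{S})$ with the stated properties follows from the classical disintegration theorem for finite Radon measures on a Polish space, with $\nu:=\mathcal{S}_{*}\mu$.

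The heart of the theorem is the curvature-dimension assertion. Since $u|_\mathcal{S}$ is an isometry from a convex set of dimension $m$ onto a subset of $\mathbb{R}^m$, it must in fact be affine, so every leaf of dimension $m$ is contained in an affine $m$-plane $V_\mathcal{S}$, and $\mu_\mathcal{S}$ has an $m$-dimensional density with respect to $\mathcal{H}^m\lfloor_{V_\mathcal{S}}$. To show that $(\mathrm{int}\mathcal{S},d,\mu_\mathcal{S})$ satisfies $CD(\kappa,N)$, I would test displacement convexity of the associated $N$-R\'enyi entropy along $W_2$-geodesics of probability measures supported in the leaf. Crucially, any two absolutely continuous measures on $\mathcal{S}$ admit a $W_2$-optimal transport that stays inside $V_\mathcal{S}$, since both endpoints have support in this affine subspace; the leaf-internal geodesic is therefore identical to the ambient one. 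The remaining task is to upgrade displacement convexity against the ambient $\mu$ into the corresponding inequality against $\mu_\mathcal{S}$---this is where Klartag's one-dimensional needle argument has to be generalised to $m$ dimensions.

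The natural strategy for the upgrade is to approximate the given leaf-supported measures by smoothly averaging them across transverse $(n-m)$-dimensional windows, using the disintegration itself to parametrise nearby leaves. This produces absolutely continuous measures on $\mathbb{R}^n$ to which the ambient $CD(\kappa,N)$ condition applies directly, and Fubini together with the disintegration formula isolates the in-leaf contribution in the limit. The main obstacle is this limiting step: one needs quantitative regularity of the foliation by $m$-planes near a $\nu$-typical leaf---essentially a Lipschitz dependence of $V_{\mathcal{S}(x)}$ on $x$---to ensure that transverse averaging does not distort the entropy beyond what the ambient $CD(\kappa,N)$ inequality permits. Establishing such regularity is the multi-dimensional analogue of the Jacobian/cone-volume estimates used along needles in the $m=1$ case, and is where the dichotomy $N\in[n,\infty]$ versus $N\in(-\infty,1)$ should enter, each of the two ranges contributing its own transverse convexity inequality for the disintegration density.
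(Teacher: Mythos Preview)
Your outline for the partition and disintegration is broadly consonant with the paper's, though the paper is more explicit on the point you gloss over: the set where two distinct leaves meet is shown to be Lebesgue-null by proving that $u$ cannot be differentiable at any such point and then invoking Rademacher's theorem. Your phrase ``a negligible phenomenon for a typical $u$'' would need exactly this argument to become a proof; the paper also separately shows that the union $\partial T_m$ of relative boundaries of $m$-dimensional leaves is null, which is what forces the conditional measures to be concentrated on interiors.

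For the curvature-dimension part you propose a genuinely different route. The paper works with the \emph{differential} (Bakry--\'Emery) form of $CD(\kappa,N)$: it builds an explicit bi-Lipschitz change of variables $F$ straightening each cluster of $m$-dimensional leaves into $\mathbb{R}^{n-m}\times\mathbb{R}^m$, identifies the conditional density on a leaf as $c\,(J_nF)\circ G\,e^{-\rho}$ via the area formula, and then differentiates $\log\lvert\det DF\rvert$ twice along the leaf. The first and second derivatives come out as $\mathrm{tr}\,A$ and $-\mathrm{tr}(A^2)$ for a symmetric operator $A$ acting on the $(n-m)$-dimensional transverse space; the Cauchy--Schwarz inequality $(\mathrm{tr}\,A)^2\le(n-m)\,\mathrm{tr}(A^2)$ together with an elementary algebraic lemma then produces the $CD(\kappa,N)$ inequality on the leaf, with the range $N\in(-\infty,1)\cup[n,\infty]$ entering through that algebraic lemma rather than through any transverse convexity argument.

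Your synthetic approach---thicken leaf-supported measures transversally, apply ambient displacement convexity, pass to the limit---has a genuine gap. The entropy of the thickened measures against $\mu$ diverges as the tube shrinks; after renormalising, the surviving term is precisely the logarithm of the transverse Jacobian, and controlling its behaviour along a $W_2$-geodesic requires the very second-derivative bound that the paper obtains by direct computation. In other words, the ``quantitative regularity of the foliation'' you correctly flag as the obstacle \emph{is} the content of the theorem, and the paper resolves it by computing $D^2\log J_nF$ rather than by a limiting procedure. Your route also presupposes that $\mu_{\mathcal S}$ is absolutely continuous with respect to $\mathcal H^m$ on the leaf---already non-trivial for $m>1$, and in the paper established only through the same area-formula and Lipschitz-change-of-variables machinery.
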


Here we denote by $\mathcal{S}$ the map and a set. This should not lead to any ambiguity, as for $\lambda$-almost every $y\in\mathbb{R}^n$ such that $y\in\mathcal{S}(x)$ for some $x\in\mathbb{R}^n$ we have $\mathcal{S}(x)=\mathcal{S}(y)$, see Corollary \ref{col:unique}.

The theorem provides a partial positive answer to a conjecture of Klartag posed in \cite[Chapter 6]{Klartag}, where it is conjectured that the above theorem holds true also for $\nu$-almost every leaf $\mathcal{S}$ of lower dimensions.

Note that the absolute continuity of the conditional measures with respect to a partition into convex sets may fail to be true. Indeed, as proved in \cite{Ambrosio1} and in \cite{Larman}, there exists a measurable partition, up to a set of Lebesgue measure zero, of the unit cube in $\mathbb{R}^3$ into pairwise disjoint line segments such that the conditional measures are Dirac measures.

The result enriches the knowledge of regularity properties of Lipschitz maps. For $m=1$ such regularity was necessary to prove the existence of optimal transport map in the Monge-Kantorovich problem (see \cite{Sudakov}, \cite{Ambrosio3}, \cite{Caffarelli}). We refer the reader to \cite{Villani1}, \cite{Villani2} and \cite{Kolesnikov} for an account on the optimal transport problem.

The possible applications of the result are in the localisation or dimensional reduction arguments, where the disintegration is an effective tool. A similar result to ours in case $m=1$ has been used to derive new proofs and generalisations of isoperimetric inequality, Poincar\'e's inequality and others to the setting of metric measure spaces satisfying curvature bounds. We refer the reader to \cite{Klartag}, \cite{Cavalletti2}, \cite{Cavalletti3}, \cite{Ohta}.

The proof relies on the area formula and Fubini's theorem and is based on a previous work \cite{Caffarelli}. See also \cite{Ambrosio3} and \cite{Feldman} for similar approach to the Monge-Kantorovich problem.
Another tool that we use is the Wijsman topology \cite{Wijsman} on the closed subsets of $\mathbb{R}^n$ which makes it a Polish space, so we may apply disintegration theorem.

Let us note that there exists a different method of proving the absolute continuity of the conditional measures in the case $m=1$. This method is present in this context in \cite{Caravenna1}. It is also applied in \cite{Bianchini2} and in \cite{Caravenna2}. In \cite{Bianchini1}, it was used to complete the idea of a proof proposed by Sudakov in \cite{Sudakov} of existence of an optimal Monge's map with norm cost.
The Fubini's theorem and a clever application of the Thales's theorem are the core of the idea. The absolute continuity of the conditional measures is not proved directly, but, instead, it is shown that the measures of the orthogonal sections are absolutely continuous with respect to each other.

%

Suppose now that we are given a Borel probability measure $\mu$ on $\mathbb{R}^n$ absolutely continuous with respect to the Lebesgue measure such that
\begin{equation*}
\int_{\mathbb{R}^n}fd\mu=0
\end{equation*}
for some integrable function $f\colon\mathbb{R}^n\to\mathbb{R}^m$ such that
\begin{equation*}
\int_{\mathbb{R}^n}\norm{f(x)}\norm{x}d\mu(x)<\infty.
\end{equation*}
Let $u\colon\mathbb{R}^n\to\mathbb{R}^m$ be a $1$-Lipschitz map such that
\begin{equation}\label{eqn:maxx}
\int_{\mathbb{R}^n}\langle u,f\rangle d\mu=\sup\Big\{\int_{\mathbb{R}^n}\langle v,f\rangle d\mu\big| v\colon\mathbb{R}^n\to\mathbb{R}^m \text{ is }1\text{- Lipschitz}\Big\}.
\end{equation}
In \cite[Chapter 6]{Klartag} it is conjectured that  
\begin{equation}\label{eqn:mass}
\int_{\mathbb{R}^n}fd\mu_{\mathcal{S}}=0\text{ for }\nu\text{- almost every }\mathcal{S}\in CC(\mathbb{R}^n),
\end{equation}
where $\{\mu_{\mathcal{S}}|\mathcal{S}\in CC(\mathbb{R}^n)\}$ is disintegration of $\mu$ with respect to the leaves of $u$, and $\nu$ is the push forward of $\mu$ with respect to the map $\mathcal{S}$.

We provide a counterexample to this conjecture. Moreover we show that such statement fails to be true even if we replace the set of $1$-Lipschitz maps in (\ref{eqn:maxx}) by any locally uniformly closed subset of $1$-Lipschitz maps with respect to \emph{any norm} on $\mathbb{R}^n$ and \emph{any strictly convex norm} on $\mathbb{R}^m$, unless the set of maps is trivial, i.e. consisting only of isometries.
Note that the outline of a proof of the conjecture suggested in \cite{Klartag} has a gap, as follows by \cite{Ciosmak}.

We develop a theory of optimal transport of vector measures and establish its basic properties. 
We show, among others, that for a given vector measure, there may be no optimal transport. However, if an optimal transport exists and has certain absolute continuity properties, then we prove that the conjecture of Klartag holds true.

Let us mention the existence of another approach to optimal transport of vector measures that differs from ours developed by Chen, Georgiou, Tannenbaum, Tyu, Li and Osher (see \cite{Chen1, Chen2}).
%

\section{Outline of the article}

Here we describe the structure of the paper. In Section \ref{sec:partition} we provide a careful definition of the partition associated to a $1$-Lipschitz map. What will follow in the latter sections is the existence of the map $\mathcal{S}\colon\mathbb{R}^n\to CC(\mathbb{R}^n)$ satisfying the properties of Theorem \ref{thm:d}. We prove that certain components of $u$ are  differentiable on certain leaves. Moreover we investigate the regularity of the derivative on the leaves of dimension $m$ and provide an interesting strengthening of $1$-Lipschitz property of $u$, see Lemma \ref{lem:important} and Remark \ref{rmk:strength}.

In Section \ref{sec:varia} we define a Lipschitz change of variables on certain sets, called \emph{clusters}, that will allow us to use area formula and then Fubini's theorem to prove the regularity properties of the conditional measures. Here we provide significantly simpler proofs than the proofs in \cite{Caffarelli}, mainly thanks to Lemma \ref{lem:important} and Corollary \ref{col:strength}.

In Section \ref{sec:measur} we prove measurability properties of the partition, which will allow us to show the map $\mathcal{S}\colon\mathbb{R}^n\to CC(\mathbb{R}^n)$ is measurable with respect to the Wijsman topology on $CC(\mathbb{R}^n)$. We also prove that the set of boundaries of leaves of maximal dimension is a Borel set  of the Lebsegue measure zero.

In Section \ref{sec:disin} we provide a part of a proof of Theorem \ref{thm:d}. 

In Section \ref{sec:transport} we provide a definition of optimal transport of $\mathbb{R}^m$-valued vector measures on a metric space. We prove basic theorems about the optimal transport of vector measures and show that it is a convex dual to the problem (\ref{eqn:maxx}). Using this theory we provide a positive answer the aforementioned conjecture, provided that there exists an optimal transport such that the marginals of its total variation are absolutely continuous, see Theorem \ref{thm:balance}.

In Section \ref{sec:counter} we assume that $m>1$ and we provide an aforementioned counterexample which show that in general the so-called mass balance condition (\ref{eqn:mass}) does not hold true. Let $\mathcal{F}$ be any subset of $1$-Lipschitz maps that is locally uniformly closed. We prove that (\ref{eqn:mass}) fails to be true, when the maximisation problem (\ref{eqn:maxx}) is replaced by 
\begin{equation}
\sup\Big\{\int_{\mathbb{R}^n}\langle v,f\rangle d\mu\big| v\in\mathcal{F}\Big\},
\end{equation}
unless $\mathcal{F}$ is trivial in the sense that any $u$ that attains the above supremum is an isometry. This is shown for any norm on $\mathbb{R}^n$ and any strictly convex norm on $\mathbb{R}^m$.

In Section \ref{sec:curv} we prove that the conditional measures $\mu_{\mathcal{S}}$ have densities such that the weighted Riemannian manifolds $(\mathrm{int}\mathcal{S},d,\mu_{\mathcal{S}})$ satisfy the curvature-dimension condition.

\section{Partition and its regularity}\label{sec:partition}

If $A\subset \mathbb{R}^n$ let us denote by $\mathrm{Conv}(A)$ the \emph{convex hull} of $A$, i.e. the set 
\begin{equation*}
\Big\{\sum_{i=1}^k \lambda_i x_i\mid k\in\mathbb{N},\lambda_1,\dotsc,\lambda_k\geq 0, \sum_{i=1}^k\lambda_i=1,x_1,\dotsc,x_k\in A\Big\}.
\end{equation*}
We define the \emph{affine hull} $\mathrm{Aff}(A)$ of a set $A\subset\mathbb{R}^n$ to be 
\begin{equation*}
\Big\{\sum_{i=1}^k \lambda_i x_i\mid k\in\mathbb{N},\lambda_1,\dotsc,\lambda_k\in\mathbb{R}, \sum_{i=1}^k\lambda_i=1,x_1,\dotsc,x_k\in A\Big\}.
\end{equation*}

\begin{lemma}\label{lem:coordinates2}
Let $z_1,\dotsc,z_k\in\mathbb{R}^n$. 
Let $x\in \mathbb{R}^n$ and $y\in \mathrm{Conv}(z_1,\dotsc,z_k)$. Suppose that 
\begin{equation*}
\norm{x-z_i}\leq\norm{y-z_i},
\end{equation*}
for $i=1,\dotsc, k$. Then $x=y$.
\end{lemma}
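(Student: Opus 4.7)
The plan is to reduce the $k$ pointwise inequalities $\norm{x-z_i}\leq \norm{y-z_i}$ to a single inequality in $\norm{x-y}$ by taking a convex combination indexed by the coefficients that express $y$ as a convex combination of the $z_i$. The essential observation is that, in an inner-product space, the squared distances expand as quantities that are \emph{linear} in $z_i$ once the common $\norm{z_i}^2$ term is cancelled, so averaging with the right weights should collapse the $z_i$ into $y$ itself.

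Concretely, I would first write $y = \sum_{i=1}^k \lambda_i z_i$ with $\lambda_i \geq 0$ and $\sum_i \lambda_i = 1$, using the hypothesis $y\in\mathrm{Conv}(z_1,\dotsc,z_k)$. Squaring the assumed inequalities and expanding in the Euclidean inner product, the $\norm{z_i}^2$ contributions cancel on both sides, leaving
\begin{equation*}
\norm{x}^2 - \norm{y}^2 \leq 2\langle x-y, z_i\rangle \quad \text{for each } i=1,\dotsc,k.
\end{equation*}
Next, I would multiply the $i$-th inequality by $\lambda_i$ and sum. Since the right-hand side is linear in $z_i$, this produces $\norm{x}^2 - \norm{y}^2 \leq 2\langle x-y, y\rangle$, which rearranges to $\norm{x-y}^2 \leq 0$, and hence $x=y$.

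There is no real obstacle: once the inequalities are squared and the linear-in-$z_i$ structure is exposed, the convex combination expressing $y$ provides precisely the weights needed to fold everything into $\norm{x-y}^2$. The only conceptual point worth emphasising is that the argument uses the Euclidean (inner-product) structure in an essential way through the expansion $\norm{x-z_i}^2 = \norm{x}^2 - 2\langle x, z_i\rangle + \norm{z_i}^2$; for a general norm such a clean linear-in-$z_i$ reduction is not available, which would constitute the obstacle if one attempted to generalise the lemma beyond the Euclidean setting.
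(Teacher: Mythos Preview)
Your proof is correct and essentially identical to the paper's: both expand the squared inequalities to obtain $\norm{x}^2-\norm{y}^2\leq 2\langle x-y,z_i\rangle$, then take the convex combination with the weights expressing $y$ to reach $\norm{x-y}^2\leq 0$. The only cosmetic difference is that the paper states the averaged inequality for a general $z\in\mathrm{Conv}(z_1,\dotsc,z_k)$ before specialising to $z=y$, whereas you go directly to $y$.
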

\begin{proof}
Denote 
\begin{equation*}
\mathrm{Conv}(z_1,\dotsc,z_k)=Z.
\end{equation*}
We have
\begin{equation*}
\norm{x}^2+\norm{z_i}^2-2\langle x,z_i\rangle\leq \norm{y}^2+\norm{z_i}^2-2\langle y,z_i\rangle
\end{equation*}
for all $i=1,\dotsc,k$. Hence, for these $i$'s, we have 
\begin{equation*}
\norm{x}^2-\norm{y}^2\leq2\langle x-y,z_i\rangle.
\end{equation*}
Thus, adding up these inequalities multiplied by non-negative coefficients that sum up to one, we get
\begin{equation*}
\norm{x}^2-\norm{y}^2\leq2\langle x-y,z\rangle
\end{equation*}
for all $z\in Z$. Then, putting $z=y$, we obtain
\begin{equation*}
\norm{x}^2-\norm{y}^2\leq2\langle x,y\rangle-2\norm{y}^2, 
\end{equation*}
i.e. $\norm{x-y}^2\leq 0$.
\end{proof}

\begin{definition}
Let $u\colon \mathbb{R}^n\to\mathbb{R}^m$ be a $1$-Lipschitz function. A set $\mathcal{S}\subset \mathbb{R}^n$ is called a \emph{leaf} of $u$ if $u|_{\mathcal{S}}$ is an isometry and for any $y\notin \mathcal{S}$ there exists $x\in\mathcal{S}$ such that $\norm{u(y)-u(x)}<\norm{y-x}$.
\end{definition}

In other words, $\mathcal{S}$ is a leaf if it is a maximal set, with respect to the order induced by inclusion, such that $u|_{\mathcal{S}}$ is an isometry.

\begin{definition}
If $C\subset\mathbb{R}^n$ is a convex set, then we shall call the \emph{tangent space} of $C$ the linear space $\mathrm{Aff}(C)-\mathrm{Aff}(C)$.
We shall call the \emph{relative interior} of $C$ the relative interior with respect to the topology of $\mathrm{Aff}(C)$.
\end{definition}

\begin{lemma}\label{lem:uniq}
Let $\mathcal{S}\subset \mathbb{R}^n$ be an arbitrary subset. Let $u\colon \mathcal{S}\to \mathbb{R}^m$ be an isometry. Then there exists a unique $1$-Lipschitz function $\tilde{u}\colon \mathrm{Conv}(\mathcal{S})\to \mathbb{R}^m$ such that $\tilde{u}|_{\mathcal{S}}=u$. Moreover $\tilde{u}$ is an isometry.
\end{lemma}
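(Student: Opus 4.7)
The plan is to define $\tilde{u}$ as the unique affine extension of $u$ along each representation of a point in $\mathrm{Conv}(\mathcal{S})$, and then verify well-definedness, the isometry property, and uniqueness using a standard polarisation identity together with Lemma~\ref{lem:coordinates2}. Concretely, for $y\in\mathrm{Conv}(\mathcal{S})$ written as $y=\sum_{i=1}^k \lambda_i x_i$ with $x_i\in\mathcal{S}$, $\lambda_i\geq 0$ and $\sum\lambda_i=1$, I would set $\tilde{u}(y):=\sum_{i=1}^k\lambda_i u(x_i)$.

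The workhorse is the identity
\[
\Big\|\sum_{j=1}^N\mu_j z_j\Big\|^2=-\tfrac{1}{2}\sum_{j,l=1}^N\mu_j\mu_l\|z_j-z_l\|^2,
\]
valid for any $z_1,\dotsc,z_N\in\mathbb{R}^n$ and any reals $\mu_1,\dotsc,\mu_N$ with $\sum_j\mu_j=0$; this is a one-line computation expanding the square. Since $u$ preserves pairwise distances on $\mathcal{S}$, the right-hand side is unchanged when each $z_j\in\mathcal{S}$ is replaced by $u(z_j)$. Applying this to the coefficients $(\lambda_1,\dotsc,\lambda_k,-\lambda'_1,\dotsc,-\lambda'_\ell)$ associated with two representations $\sum\lambda_i x_i=\sum\lambda'_j x'_j$ of the same point shows $\bigl\|\sum\lambda_i u(x_i)-\sum\lambda'_j u(x'_j)\bigr\|=0$, so $\tilde{u}$ is well defined on $\mathrm{Conv}(\mathcal{S})$. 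Applying it more generally to two points $y,y'\in\mathrm{Conv}(\mathcal{S})$ expressed through their convex representations yields $\|\tilde{u}(y)-\tilde{u}(y')\|=\|y-y'\|$, so $\tilde{u}$ is an isometry, and in particular $1$-Lipschitz, extending $u$.

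For uniqueness, let $\tilde{u}'\colon\mathrm{Conv}(\mathcal{S})\to\mathbb{R}^m$ be any $1$-Lipschitz extension of $u$, fix $y=\sum\lambda_i x_i\in\mathrm{Conv}(\mathcal{S})$, and set $w:=\tilde{u}(y)=\sum\lambda_i u(x_i)$. Then $w\in\mathrm{Conv}(u(x_1),\dotsc,u(x_k))$, and the isometry property already established gives $\|w-u(x_i)\|=\|y-x_i\|$ for each $i$, while the $1$-Lipschitz assumption on $\tilde{u}'$ yields $\|\tilde{u}'(y)-u(x_i)\|\leq\|y-x_i\|=\|w-u(x_i)\|$. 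Lemma~\ref{lem:coordinates2}, applied in $\mathbb{R}^m$ with $\tilde{u}'(y)$ in the role of $x$, $w$ in the role of $y$, and the $u(x_i)$ as the vertices, forces $\tilde{u}'(y)=w=\tilde{u}(y)$. I do not anticipate a serious obstacle here: the polarisation identity trivialises well-definedness and isometry, and Lemma~\ref{lem:coordinates2} is essentially tailor-made to supply the uniqueness step.
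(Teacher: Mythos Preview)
Your proof is correct and follows essentially the same strategy as the paper: define the extension by affinity on convex combinations, verify it is an isometry via a polarisation argument, and deduce uniqueness from Lemma~\ref{lem:coordinates2}. The one notable difference is packaging: the paper first shows $u$ preserves inner products, then introduces an explicit linear map $T$ on the tangent space of $\mathrm{Conv}(\mathcal{S})$ and checks $T$ is isometric on a spanning set, whereas your single identity $\bigl\|\sum_j\mu_j z_j\bigr\|^2=-\tfrac12\sum_{j,l}\mu_j\mu_l\|z_j-z_l\|^2$ for $\sum_j\mu_j=0$ handles well-definedness and the isometry property simultaneously and bypasses the need to name $T$. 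This is a mild streamlining rather than a different idea; the uniqueness step via Lemma~\ref{lem:coordinates2} is identical in both arguments.
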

\begin{proof}
Take any point $z\in \mathcal{S}$ such that 
\begin{equation*}
z=\sum_{i=1}^k t_iz_i
\end{equation*}
for some non-negative real numbers $t_1,\dotsc,t_k$ that sum up to one and some points $z_1,\dotsc, z_k\in \mathcal{S}$. We claim that 
\begin{equation}\label{eqn:affinity}
u(z)=\sum_{i=1}^k t_iu(z_i).
\end{equation} We have
\begin{equation*}
\norm{u(z)-u(z_i)}=\norm{z-z_i}.
\end{equation*} 
Moreover, by polarisation formula, $u$ preserves the scalar product, i.e. for all points $r,s,t\in \mathcal{S}$
\begin{equation*}
\begin{aligned}
&\langle u(r)-u(s), u(t)-u(s)\rangle =\\ &=\frac 12 \big( \norm{u(r)-u(s)}^2+\norm{u(t)-u(s)}^2-\norm{u(r)-u(t)}^2\big)=
\langle r-s,t-s\rangle.
\end{aligned}
\end{equation*}
Hence
\begin{equation}\label{eqn:isometric}
\begin{aligned}
&\Big\lVert\sum_{i=1}^k t_iu(z_i)-u(z_l)\Big\rVert^2=\Big\lVert\sum_{i=1}^k t_i(u(z_i)-u(z_l))\Big\rVert^2=\\
&=\sum_{i,j=1}^k t_it_j \langle u(z_i)-u(z_l),u(z_j)-u(z_l)\rangle=\sum_{i,j=1}^k t_it_j \langle z_i-z_l,z_j-z_l\rangle=\\
&=\Big\lVert\sum_{i=1}^k t_iz_i-z_l\Big\rVert^2=\norm{z-z_l}^2.
\end{aligned}
\end{equation}
Thus, by Lemma \ref{lem:coordinates2}, equation (\ref{eqn:affinity}) holds true.
We may now extend $u$ to $\mathrm{Conv}(\mathcal{S})$ by affinity. That is, if $x_1,\dotsc,x_r\in \mathcal{S}$ are any points in general position, i.e. vectors $(x_i-x_1)_{i=1}^r$ are linearly independent, and $s_1,\dotsc,s_r$ are any non-negative real numbers that sum up to $1$, we set
\begin{equation*}
\tilde{u}\bigg(\sum_{i=1}^r s_ix_i\bigg)=\sum_{i=1}^r s_iu(x_i).
\end{equation*}
Function $\tilde{u}$ defined in such a way is affine. Hence, there exist a linear map $T\colon V\to\mathbb{R}^m$ defined on the tangent space $V$ of $\mathrm{Conv}(\mathcal{S})$  and a vector $b\in\mathbb{R}^m$ such that
\begin{equation*}
\tilde{u}(y)=T(y-y_0)+b
\end{equation*}
for any $y\in\mathrm{Conv}(\mathcal{S})$ and some $y_0\in\mathrm{Conv}(\mathcal{S})$.
We claim that $\tilde{u}$ is an isometry. For this, it is enough to check that $T$ is isometric on a set $(r_i-r_0)_{i=1}^l$, where $r_0,\dotsc,r_l\in \mathcal{S}$ are such that 
\begin{equation*}
\mathrm{span}(r_i-r_0)_{i=1}^l=V.
\end{equation*}
The latter follows from the assumption that $u$ is isometric on $\mathcal{S}$.

Suppose now that we have another $1$-Lipschitz extension $v\colon \mathrm{Conv}(\mathcal{S})\to\mathbb{R}^m$. To prove that $v=\tilde{u}$ it is enough to show that $v$ is affine. Choose non-negative real numbers $s_1,\dotsc,s_r$ summing up to $1$ and any points $x_1,\dotsc,x_r\in \mathcal{S}$.
Then, by $1$-Lipschitzness and by the fact that $v$ is isometric on $\mathcal{S}$, we get, as in (\ref{eqn:isometric}),
\begin{equation*}
\Big\lVert v\Big(\sum_{i=1}^rs_ix_i\Big)-v(x_j)\Big\rVert\leq \Big\lVert \sum_{i=1}^rs_ix_i-x_j\Big\rVert =\Big\lVert\sum_{i=1}^rs_iv(x_i)-v(x_j)\Big\rVert.
\end{equation*}
By Lemma \ref{lem:coordinates2} we see that 
\begin{equation*}
v\bigg(\sum_{i=1}^rs_ix_i\bigg)=\sum_{i=1}^rs_iv(x_i).
\end{equation*}
Any point in $\mathrm{Conv}(\mathcal{S})$ is a convex combination of points $\mathcal{S}$, so the condition of affinity of $v$ also holds for any convex combination of points in $\mathrm{Conv}(\mathcal{S})$.
\end{proof}

\begin{corollary}
Any leaf $\mathcal{S}$ of $u$ is a closed convex set and $u|_{\mathcal{S}}$ is an affine isometry.
\end{corollary}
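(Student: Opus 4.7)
The plan is to combine the uniqueness part of Lemma \ref{lem:uniq} with the maximality built into the definition of a leaf. Let $\mathcal{S}$ be a leaf of $u$. First I would establish that $\mathcal{S}$ is convex. By Lemma \ref{lem:uniq}, there exists a unique $1$-Lipschitz extension $\tilde{u}\colon\mathrm{Conv}(\mathcal{S})\to\mathbb{R}^m$ of $u|_{\mathcal{S}}$, and moreover this $\tilde{u}$ is an isometry. However, the restriction of the ambient map $u$ to $\mathrm{Conv}(\mathcal{S})$ is itself a $1$-Lipschitz extension of $u|_{\mathcal{S}}$, so the uniqueness clause forces $u|_{\mathrm{Conv}(\mathcal{S})} = \tilde{u}$. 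Hence $u$ is an isometry on the whole $\mathrm{Conv}(\mathcal{S})$. Maximality of $\mathcal{S}$ then implies $\mathrm{Conv}(\mathcal{S})\subseteq\mathcal{S}$: indeed, for any $y\in\mathrm{Conv}(\mathcal{S})$ the map $u$ is isometric on $\mathcal{S}\cup\{y\}$, so in particular $\norm{u(y)-u(x)}=\norm{y-x}$ for every $x\in\mathcal{S}$, which by the definition of a leaf precludes $y\notin\mathcal{S}$.

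Closedness follows from a short continuity argument. Given a sequence $(y_k)\subset\mathcal{S}$ with $y_k\to y$, the $1$-Lipschitz continuity of $u$ yields $u(y_k)\to u(y)$, and for each $x\in\mathcal{S}$ passing to the limit in $\norm{u(y_k)-u(x)}=\norm{y_k-x}$ gives $\norm{u(y)-u(x)}=\norm{y-x}$. Hence $u|_{\mathcal{S}\cup\{y\}}$ is an isometry, and again maximality forces $y\in\mathcal{S}$.

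Finally, since now $\mathcal{S}=\mathrm{Conv}(\mathcal{S})$, Lemma \ref{lem:uniq} produces a unique $1$-Lipschitz extension of $u|_{\mathcal{S}}$ to $\mathrm{Conv}(\mathcal{S})=\mathcal{S}$, and this extension is shown there to be an affine isometry. But $u|_{\mathcal{S}}$ is trivially such an extension of itself, so by uniqueness $u|_{\mathcal{S}}$ coincides with the affine isometric map provided by the lemma, and is therefore affine. There is no genuine obstacle in this argument; the only subtlety is recognising that the uniqueness part of Lemma \ref{lem:uniq}, rather than just the existence of the affine extension, is what lets us transfer the isometry property from $\mathcal{S}$ to $\mathrm{Conv}(\mathcal{S})$ via the ambient map $u$.
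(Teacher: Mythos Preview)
Your argument is correct and is exactly the intended derivation from Lemma \ref{lem:uniq}: the paper states the corollary without proof, and the steps you spell out---using the uniqueness clause of the lemma to identify $u|_{\mathrm{Conv}(\mathcal{S})}$ with the affine isometric extension, invoking maximality for $\mathrm{Conv}(\mathcal{S})\subset\mathcal{S}$, and a limit argument for closedness---are precisely what the author leaves implicit.
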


Let $\mathcal{S}$ be a leaf of $u$. Let $P$ denote the orthogonal projection of $\mathbb{R}^n$ onto the tangent space $V$ of $\mathcal{S}$. Let 
\begin{equation*}
T\colon V\to\mathbb{R}^m
\end{equation*}
be a linear isometry such that 
\begin{equation*}
u(y)=T(y-y_0)+b
\end{equation*}
for any $y\in\mathcal{S}$, some $y_0\in\mathcal{S}$ and some $b\in\mathbb{R}^m$. Let $Q$ denote the orthogonal projection of $\mathbb{R}^m$ onto $T(V)$.

Below by $\mathrm{int}\mathcal{S}$, $\mathrm{cl}\mathcal{S}$, $\partial\mathcal{S}$ we understand the relative interior, the relative closure and the relative boundary of $\mathcal{S}$ respectively.

\begin{lemma}\label{lem:important}
Let $u\colon \mathbb{R}^n\to\mathbb{R}^m$ be a $1$-Lipschitz map. Let $\mathcal{S}_1,\mathcal{S}_2$ be two leaves of $u$. Let $V_1,V_2$ be their respective tangent spaces and let $P_1,P_2$ be orthogonal projections onto $V_1,V_2$ respectively. Let $T_1,T_2$ be isometric maps such that 
\begin{equation*}
u(x)-u(y)=T_i(x-y)\text{ for all }x,y\in\mathcal{S}_i, i=1,2.
\end{equation*}
Let $x_i\in\mathcal{S}_i$ and $\sigma_i=\mathrm{dist}(x_i,\partial \mathcal{S}_i)$ for $i=1,2$. Then
\begin{equation*}
2\sigma_1\sigma_2\norm{P_1P_2-P_1T_1^*T_2P_2}\leq \norm{x_1-x_2}^2-\norm{u(x_1)-u(x_2)}^2,
\end{equation*}
and for $i=1,2$
\begin{equation*}
2\sigma_i\norm{P_iT_i^*(u(x_1)-u(x_2))-P_i(x_1-x_2)}\leq\norm{x_1-x_2}^2-\norm{u(x_1)-u(x_2)}^2.
\end{equation*}
\end{lemma}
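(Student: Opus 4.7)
The plan is to exploit the fact that both leaves are convex and $x_i$ admits $\sigma_i$-neighbourhood freedom within $\mathcal{S}_i$: any $v_i\in V_i$ with $\norm{v_i}\leq\sigma_i$ satisfies $x_i+v_i\in\mathcal{S}_i$ (the case $\sigma_i=0$ is trivial, since the right-hand sides are non-negative by $1$-Lipschitzness of $u$). I would then apply the $1$-Lipschitz inequality to the perturbed pair $y_i=x_i+v_i$ and use that $u|_{\mathcal{S}_i}$ is an affine isometry, so $u(y_i)=u(x_i)+T_iv_i$ with $\norm{T_iv_i}=\norm{v_i}$.

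Expanding $\norm{y_1-y_2}^2-\norm{u(y_1)-u(y_2)}^2\geq 0$ in this way, the pure $\norm{v_i}^2$ terms cancel against the $\norm{T_iv_i}^2$ terms, the remaining quadratic cross contribution $\norm{v_1-v_2}^2-\norm{T_1v_1-T_2v_2}^2$ collapses to $-2\langle v_1,Av_2\rangle$ with $A=P_1P_2-P_1T_1^*T_2P_2$ (using $v_i=P_iv_i$ and $\langle T_1v_1,T_2v_2\rangle=\langle v_1,T_1^*T_2v_2\rangle$), while the mixed linear terms gather into $-2\langle v_1,w_1\rangle+2\langle v_2,w_2\rangle$, where $w_i=P_iT_i^*(u(x_1)-u(x_2))-P_i(x_1-x_2)\in V_i$. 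Setting $D=\norm{x_1-x_2}^2-\norm{u(x_1)-u(x_2)}^2$, one arrives at the master inequality
\begin{equation*}
2\langle v_1,w_1\rangle-2\langle v_2,w_2\rangle+2\langle v_1,Av_2\rangle\leq D,
\end{equation*}
valid for every $v_1\in V_1$, $v_2\in V_2$ with $\norm{v_i}\leq\sigma_i$.

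The second pair of inequalities is then immediate: for $i=1$ I take $v_2=0$ and $v_1=\sigma_1 w_1/\norm{w_1}$ (trivial if $w_1=0$), so the master inequality collapses to $2\sigma_1\norm{w_1}\leq D$; the case $i=2$ is analogous with $v_1=0$ and $v_2=-\sigma_2 w_2/\norm{w_2}$.

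For the first inequality I would pick $x^*\in V_2$, $y^*\in V_1$ of norm at most one realising $\langle y^*,Ax^*\rangle=\norm{A}$, which exists by finite-dimensional compactness together with the observations that the range of $A$ lies in $V_1$ and $A$ vanishes on $V_2^\perp$. Applying the master inequality twice, once with $(v_1,v_2)=(\sigma_1 y^*,\sigma_2 x^*)$ and once with $(v_1,v_2)=(-\sigma_1 y^*,-\sigma_2 x^*)$, the linear $w_i$-terms change sign between the two instances while the bilinear term $2\langle v_1,Av_2\rangle=2\sigma_1\sigma_2\norm{A}$ does not; summing the two inequalities yields $4\sigma_1\sigma_2\norm{A}\leq 2D$, which is the desired bound. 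The main obstacle is precisely this symmetric cancellation step: the signs of the $\langle y^*,w_1\rangle$ and $\langle x^*,w_2\rangle$ terms are a priori uncontrolled, so using a single perturbation is not enough, and one must pair two antipodal perturbations to kill the linear contribution and isolate the operator-norm term.
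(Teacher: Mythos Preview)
Your proposal is correct and follows essentially the same approach as the paper: expand $\norm{y_1-y_2}^2-\norm{u(y_1)-u(y_2)}^2\geq 0$ with $y_i=x_i+v_i$, obtain the master inequality bilinear in $(v_1,v_2)$, set one perturbation to zero for the second pair of estimates, and add the inequalities for $(v_1,v_2)$ and $(-v_1,-v_2)$ to cancel the linear terms and isolate the operator norm for the first estimate. The only cosmetic difference is that the paper keeps the supremum over general $w_1,w_2$ in the unit ball of $\mathbb{R}^n$, while you explicitly pick maximisers in $V_1\times V_2$; since $A=P_1AP_2$, these are equivalent.
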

\begin{proof}
Let $y_i\in\mathcal{S}_i$ for $i=1,2$. Let $v_i=y_i-x_i$ for $i=1,2$. Then we may write
\begin{equation*} 
u(y_1)-u(y_2)=u(x_1)-u(x_2)+T_1v_1-T_2v_2.
\end{equation*}
Hence $\norm{u(y_1)-u(y_2)}^2$ is equal to 
\begin{equation*}
\norm{u(x_1)-u(x_2)}^2+\norm{v_1}^2+\norm{v_2}^2+2\langle u(x_1)-u(x_2),T_1v_1-T_2v_2\rangle-2\langle T_1v_1,T_2v_2\rangle.
\end{equation*}
We also have
\begin{equation*}
y_1-y_2=x_1-x_2+v_1-v_2,
\end{equation*}
yielding
\begin{equation*}
\norm{y_1-y_2}^2=\norm{x_1-x_2}^2+\norm{v_1}^2+\norm{v_2}^2+2\langle x_1-x_2,v_1-v_2\rangle-2\langle v_1,v_2\rangle.
\end{equation*}
As $u$ is $1$-Lipschitz, $\norm{u(y_1)-u(y_2)}\leq\norm{y_1-y_2}$. By the two identities above we get therefore that 
\begin{equation*}
2\langle v_1,v_2\rangle-2\langle T_1v_1,T_2v_2\rangle+2\langle u(x_1)-u(x_2),T_1v_1-T_2v_2\rangle-2\langle x_1-x_2,v_1-v_2\rangle
\end{equation*}
is bounded above by 
\begin{equation*}
\norm{x_1-x_2}^2-\norm{u(x_1)-u(x_2)}^2
\end{equation*}
Suppose that $\sigma_1, \sigma_2$ are positive. As $y_1,y_2$ were arbitrary points of $\mathcal{S}_1,\mathcal{S}_2$ respectively, the above inequality holds true for any $v_1\in V_1$ and $v_2\in V_2$ of norm at most $\sigma_1$ and $\sigma_2$ respectively. If we add two such inequalities with $v_1,v_2$ changed to $-v_1,-v_2$ then we get that 
\begin{equation*}
2\langle v_1,v_2\rangle-2\langle T_1v_1,T_2v_2\rangle\leq \norm{x_1-x_2}^2-\norm{u(x_1)-u(x_2)}^2.
\end{equation*}
Equivalently for any $w_1,w_2\in\mathbb{R}^n$ of norm at most one we have
\begin{equation*}
\sigma_1\sigma_2\big\langle w_1,(P_1P_2-P_1T_1^*T_2P_2)w_2\big\rangle\leq \norm{x_1-x_2}^2-\norm{u(x_1)-u(x_2)}^2.
\end{equation*}
Taking supremum over all $w_1,w_2\in\mathbb{R}^n$ of norm at most one yields the first desired inequality.
For the next inequalities, we assume that $\sigma_2>0$ and we put $v_1=0$ to get that
\begin{equation*}
-2\langle u(x_1)-u(x_2),T_2v_2\rangle+2\langle x_1-x_2,v_2\rangle\leq \norm{x_1-x_2}^2-\norm{u(x_1)-u(x_2)}^2.
\end{equation*}
Analogously for $v_2=0$ and $\sigma_1>0$
\begin{equation*}
2\langle u(x_1)-u(x_2),T_1v_1\rangle-2\langle x_1-x_2,v_1\rangle\leq \norm{x_1-x_2}^2-\norm{u(x_1)-u(x_2)}^2.
\end{equation*}
Hence
\begin{equation*}
\sigma_2\bigg\langle\big( P_2T_2^*\big(u(x_1)-u(x_2)\big)-P_2(x_1-x_2)\big),w_2\bigg\rangle\leq \norm{x_1-x_2}^2-\norm{u(x_1)-u(x_2)}^2
\end{equation*}
and
\begin{equation*}
\sigma_1\bigg\langle\big( P_1T_1^*\big(u(x_1)-u(x_2)\big)-P_1(x_1-x_2)\big),w_1\bigg\rangle\leq \norm{x_1-x_2}^2-\norm{u(x_1)-u(x_2)}^2.
\end{equation*}
Taking suprema over $w_1,w_2$ in the unit ball of $\mathbb{R}^n$ yields the desired results.
\end{proof}

\begin{remark}\label{rmk:strength}
Lemma \ref{lem:important} tells us that if $x_1,x_2$ belong to relative interiors of leaves $\mathcal{S}_1,\mathcal{S}_2$ respectively, then the $1$-Lipschitzness of map $u\colon\mathbb{R}^n\to\mathbb{R}^m$ is strengthened to the condition that
\begin{equation*}
\norm{u(x_1)-u(x_2)}^2+2\sigma_1\sigma_2\norm{P_1P_2-P_1T_1^*T_2P_2}\leq \norm{x_1-x_2}^2.
\end{equation*} 
\end{remark}

\begin{corollary}\label{col:strength}
Let $u\colon\mathbb{R}^n\to\mathbb{R}^m$ be a $1$-Lipschitz map. Let $x_i\in\mathrm{int}\mathcal{S}_i$ belong to the relative interior of leaf $\mathcal{S}_i$ of $u$ of dimension $m$, for $i=1,2$.  Then
\begin{equation*}
\norm{Du(x_1)-Du(x_2)}\leq \sqrt{\frac{\norm{x_1-x_2}^2-\norm{u(x_1)-u(x_2)}^2}{2\sigma_1\sigma_2}}.
\end{equation*}
Here $\sigma_i=\mathrm{dist}(\partial\mathcal{S}_i,x_i)$ for $i=1,2$.
\end{corollary}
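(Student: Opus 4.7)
Plan:

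The plan is to combine Lemma~\ref{lem:important} with an explicit identification of $Du(x_i)$ at interior points of leaves of maximal dimension~$m$. I would first show that $Du(x_i)=T_i\circ P_i$: since $u|_{\mathcal{S}_i}$ is the affine isometry $y\mapsto T_i(y-y_i)+b_i$, the derivative of $u$ at $x_i$ along any direction in $V_i$ equals $T_i$ applied to that direction. Because $\dim V_i=\dim\mathbb{R}^m=m$, the isometry $T_i$ is a bijection, and the $1$-Lipschitz property of $u$ forces $Du(x_i)$ to vanish on $V_i^\perp$; otherwise one could combine a direction in $V_i$ with one in $V_i^\perp$ to exceed the unit operator norm. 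Hence $Du(x_i)$ is simply $T_iP_i$.

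Using $T_1T_1^*=I_{\mathbb{R}^m}$, which holds because $T_1$ is an isometric bijection between $m$-dimensional spaces, one factors
\[
T_1P_1-T_2P_2=T_1\bigl(P_1-T_1^*T_2P_2\bigr).
\]
The inner factor takes values in $V_1$, on which $T_1$ acts isometrically, so $\norm{Du(x_1)-Du(x_2)}=\norm{P_1-T_1^*T_2P_2}$ as an operator on $\mathbb{R}^n$.

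The last step is to bound $\norm{P_1-T_1^*T_2P_2}$ by $\sqrt{A/(2\sigma_1\sigma_2)}$, where $A=\norm{x_1-x_2}^2-\norm{u(x_1)-u(x_2)}^2$. Decompose $v\in\mathbb{R}^n$ as $P_2v+(I-P_2)v$. The contribution from $P_2v\in V_2$ is directly controlled by Lemma~\ref{lem:important}, whose first inequality gives $\norm{P_1P_2v-P_1T_1^*T_2P_2v}\leq\bigl(A/(2\sigma_1\sigma_2)\bigr)\norm{v}$. The contribution from $(I-P_2)v\in V_2^\perp$ is governed by $\norm{P_1|_{V_2^\perp}}$, i.e.\ by the largest principal angle between $V_1$ and $V_2$; this bound itself follows from the Lemma, since for a unit vector $v\in V_2$ the inequality $\norm{P_1v-T_1^*T_2v}\leq A/(2\sigma_1\sigma_2)$ combined with $\norm{T_1^*T_2v}=1$ yields $\norm{P_1v}\geq 1-A/(2\sigma_1\sigma_2)$, and the equal-dimensional case of principal angle duality transfers the resulting control on $\norm{(I-P_1)|_{V_2}}$ to a bound on $\norm{P_1|_{V_2^\perp}}$. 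The main obstacle is keeping the constants sharp: the naive combination of the two pieces picks up factors of $\sqrt{2}$ that must be absorbed carefully so that the factor $1/(2\sigma_1\sigma_2)$ survives intact inside the final square root, rather than degrading to something larger.
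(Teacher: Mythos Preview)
Your identification $Du(x_i)=T_iP_i$ is correct and matches the paper (which derives it simply from the fact that in dimension $m$ the projection $Q_i$ onto $\mathrm{im}\,T_i$ is the identity). The difficulty is exactly where you locate it: your domain decomposition into $V_2$ and $V_2^{\perp}$, together with the principal-angle duality argument, does not recover the constant $1/(2\sigma_1\sigma_2)$ without loss, and you leave that step unresolved. So as written the proposal has a genuine gap.

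The paper avoids this entirely by a different and much shorter manoeuvre. Rather than bounding $\norm{P_1-T_1^*T_2P_2}$ as an operator on $\mathbb{R}^n$, it bounds the \emph{adjoint} $\norm{(T_1P_1)^*-(T_2P_2)^*}$ as an operator on $\mathbb{R}^m$. Going back into the proof of Lemma~\ref{lem:important} one obtains the two-sided bilinear estimate
\[
\bigl|\,\norm{P_1v_1-P_2v_2}^2-\norm{T_1P_1v_1-T_2P_2v_2}^2\,\bigr|\le \frac{\norm{x_1-x_2}^2-\norm{u(x_1)-u(x_2)}^2}{2\sigma_1\sigma_2}
\]
for all unit $v_1,v_2\in\mathbb{R}^n$. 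Now choose $v_i=(T_iP_i)^*v$ for a unit $v\in\mathbb{R}^m$. Because $\dim V_i=m$ one has $Q_i=T_iP_i(T_iP_i)^*=\mathrm{Id}_{\mathbb{R}^m}$, so $T_1P_1v_1=T_2P_2v_2=v$ and the second term on the left vanishes identically. The first term becomes $\norm{(T_1P_1)^*v-(T_2P_2)^*v}^2$, and taking the supremum over unit $v$ gives
\[
\norm{Du(x_1)-Du(x_2)}^2=\norm{(T_1P_1)^*-(T_2P_2)^*}^2\le \frac{\norm{x_1-x_2}^2-\norm{u(x_1)-u(x_2)}^2}{2\sigma_1\sigma_2}
\]
directly, with no splitting and no angle bookkeeping. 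The point you are missing is this substitution $v_i=(T_iP_i)^*v$, which exploits $Q_i=\mathrm{Id}$ to kill the image-side term; your decomposition on the domain side cannot achieve the same cancellation.
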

\begin{proof}
As the dimensions of leaves are equal to $m$, the respective projections $Q_i$ onto the images of $T_i$ are identities and $Du(x_i)=T_iP_i$, for $i=1,2$. Note that $Q_i=T_iP_i(T_iP_i)^*$ for $i=1,2$. Inferring as in Lemma \ref{lem:important} we get that
\begin{equation*}
\big|\norm{P_1v_1-P_2v_2}^2-\norm{T_1P_1v_1-T_2P_2v_2}^2\big|\leq\frac{\norm{x_1-x_2}^2-\norm{u(x_1)-u(x_2)}^2}{2\sigma_1\sigma_2}
\end{equation*}
for all $v_1,v_2\in\mathbb{R}^n$ of norm at most one. Taking $v_1=(T_1P_1)^*v$ and $v_2=(T_2P_2)^*v$ for some unit vector $v\in\mathbb{R}^m$ yields
\begin{equation*}
\norm{(T_1P_1)^*-(T_2P_2)^*}^2\leq \frac{\norm{x_1-x_2}^2-\norm{u(x_1)-u(x_2)}^2}{2\sigma_1\sigma_2}
\end{equation*}
since $Q_1=Q_2$. Taking the square root concludes the proof.
\end{proof}

%

\begin{lemma}\label{lem:diff}
Let $\mathcal{S}$ be a leaf of a $1$-Lipschitz map $u\colon \mathbb{R}^n\to\mathbb{R}^m$. Then $Qu$ is differentiable in the relative interior of $\mathcal{S}$. Moreover, if $z_0$ belongs to the relative interior of $\mathcal{S}$, then
\begin{equation*}
DQu(z_0)=TP.
\end{equation*}
If $u$ is differentiable in $z_0$ for some $z_0\in\mathcal{S}$, then
\begin{equation*}
QDu(z_0)=TP.
\end{equation*}
\end{lemma}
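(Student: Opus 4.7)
The argument splits into the two claims, and the second one is the easier to dispose of, so I would treat it first. Assume $u$ is differentiable at $z_0\in\mathcal{S}$. For every $y\in\mathcal{S}$ the segment $[z_0,y]$ lies in $\mathcal{S}$ by convexity, and on that segment $u$ is the affine isometry $u(z_0)+T(\cdot-z_0)$ by Lemma~\ref{lem:uniq}. Computing the one-sided directional derivative at $z_0$ along $y-z_0$ gives $Du(z_0)(y-z_0)=T(y-z_0)$, and since the vectors $y-z_0$ span $V$ we obtain $Du(z_0)|_{V}=T$, i.e.\ $Du(z_0)P=TP$. It remains to kill the contribution of $(I-P)$. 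For unit vectors $v\in V$ and $h\in V^\perp$ and $\epsilon\in\mathbb{R}$ one has $\|v+\epsilon h\|^2=1+\epsilon^2$, so the $1$-Lipschitz bound $\|Du(z_0)(v+\epsilon h)\|\le \|v+\epsilon h\|$ and the identity $Du(z_0)v=Tv$ force $1+2\epsilon\langle Tv, Du(z_0)h\rangle+\epsilon^2\|Du(z_0)h\|^2\le 1+\epsilon^2$. Letting $\epsilon\to 0^{\pm}$ yields $\langle Tv, Du(z_0)h\rangle=0$ for every $v\in V$, so $QDu(z_0)h=0$. Decomposing an arbitrary $w\in\mathbb{R}^n$ as $Pw+(I-P)w$ and using $QT=T$ gives $QDu(z_0)w=TPw$, as claimed.

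For the differentiability of $Qu$ on $\mathrm{int}\,\mathcal{S}$, the plan is to use Lemma~\ref{lem:important} with the trick that projecting the perturbed point back to $\mathcal{S}$ kills the first-order error. Fix $z_0\in\mathrm{int}\,\mathcal{S}$ and let $\sigma:=\mathrm{dist}(z_0,\partial\mathcal{S})>0$. For $h\in\mathbb{R}^n$ with $\|h\|<\sigma/2$ define $y:=z_0+Ph\in\mathcal{S}$, so that $\mathrm{dist}(y,\partial\mathcal{S})\ge\sigma/2$, and let $\mathcal{S}_2$ be any leaf containing $z_0+h$ (such a leaf exists by a standard Zorn's-lemma argument applied to sets on which $u$ is isometric, together with the maximality characterisation of leaves). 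Apply the second inequality of Lemma~\ref{lem:important} with $\mathcal{S}_1=\mathcal{S}$, $x_1=y$, $x_2=z_0+h$, and $i=1$. The crucial cancellation is $P_1(x_1-x_2)=-P(I-P)h=0$, so the left-hand side collapses to $2\sigma_1\|PT^\ast(u(y)-u(z_0+h))\|$.

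Since $T^\ast$ already takes values in $V$ we have $PT^\ast=T^\ast$, and because $T\colon V\to T(V)$ is an isometry, $T^\ast$ agrees with $T^{-1}Q$ and is an isometry from $T(V)$ onto $V$; hence $\|PT^\ast w\|=\|Qw\|$ for every $w\in\mathbb{R}^m$. Combined with $\|x_1-x_2\|=\|(I-P)h\|\le\|h\|$ and $\sigma_1\ge\sigma/2$, the Lemma yields
\begin{equation*}
\|Q\bigl(u(y)-u(z_0+h)\bigr)\|\le\frac{\|h\|^2}{\sigma}.
\end{equation*}
Now $y\in\mathcal{S}$ gives $u(y)=u(z_0)+TPh$, and $QT=T$ because $T$ has image in $T(V)$; therefore
\begin{equation*}
\|Qu(z_0+h)-Qu(z_0)-TPh\|\le\frac{\|h\|^2}{\sigma},
\end{equation*}
which is the differentiability of $Qu$ at $z_0$ with derivative $TP$.

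The only real subtlety is the second paragraph: I need to be sure that Lemma~\ref{lem:important} is applicable with $x_2=z_0+h$ even though $z_0+h$ may sit on the boundary of a lower-dimensional leaf or in a leaf we know nothing about. This is fine precisely because I only invoke the inequality indexed by $i=1$, so only $\sigma_1>0$ is needed, and the existence of \emph{some} leaf through $z_0+h$ is automatic from Zorn's lemma plus the maximality clause in the definition of a leaf. Everything else is algebra in $\mathbb{R}^m$ using that $T$ is a partial isometry with image $T(V)$ and adjoint $T^\ast=T^{-1}Q$.
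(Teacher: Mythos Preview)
Your proof is correct. For the first claim the paper uses essentially the same tool, Lemma~\ref{lem:important}, but applies it more directly: with $x_2=z_0$ fixed in $\mathrm{int}\,\mathcal{S}$ and $x_1=z_1$ arbitrary, the second inequality of that lemma (for $i=2$) already reads
\[
2\sigma\,\lVert Q(u(z_1)-u(z_0))-TP(z_1-z_0)\rVert\le \lVert z_1-z_0\rVert^{2}-\lVert u(z_1)-u(z_0)\rVert^{2},
\]
which is the differentiability statement with no need to pass through the projected point $y=z_0+Ph$. Your detour via $y$ works and is a nice way to see the cancellation, but it is not needed; in particular the existence of a leaf through $z_0+h$ is irrelevant, since in the proof of Lemma~\ref{lem:important} the inequality indexed by the leaf $\mathcal{S}$ uses nothing about the other point except the $1$-Lipschitz bound.

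For the second claim your route is genuinely different from the paper's and a bit slicker. The paper returns to the function-level inequality $2\langle T^{*}(u(z_1)-u(z_0))-(z_1-z_0),\,z_2-z_0\rangle\le\lVert z_1-z_0\rVert^{2}-\lVert u(z_1)-u(z_0)\rVert^{2}$ for $z_2\in\mathcal{S}$, sets $z_1=z_0+tw$, and lets $t\to 0$ to force $\langle T^{*}Du(z_0)w-w,\,v\rangle=0$ for all $v\in V$. You instead use only the pointwise consequence $\lVert Du(z_0)\rVert\le 1$ together with $Du(z_0)|_{V}=T$ to kill $QDu(z_0)$ on $V^{\perp}$ by a first-order expansion. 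Both arguments hinge on the isometry $T$ saturating the Lipschitz bound along $V$; yours avoids reopening the proof of Lemma~\ref{lem:important}, at the cost of relying on $\lVert Du(z_0)\rVert\le 1$, which is of course immediate from $1$-Lipschitzness.
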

\begin{proof}
By Lemma \ref{lem:important} we see that 
\begin{equation*}
2\sigma \norm{Q (u(z_1)-u(z_0))-TP(z_1-z_0)}\leq \norm{z_1-z_0}^2-\norm{u(z_1)-u(z_0)}^2.
\end{equation*}
for all $z_0\in\mathcal{S}$ and $z_1\in\mathbb{R}^n$. Here $\sigma=\mathrm{dist}(z_0,\partial\mathcal{S})$. Hence if $\sigma>0$ we obtain that
\begin{equation*}
\limsup_{z_1\to z_0} \frac{\norm{Q(u(z_1)-u(z_0)-TP(z_1-z_0)}}{\norm{z_1-z_0}}\leq \limsup_{z_1\to z_0} \frac{\norm{z_1-z_0}}{\sigma}=0.
\end{equation*}
This yields the asserted differentiability. Now, suppose that $u$ is differentiable at $z_0\in\mathcal{S}$.  Inferring as in the proof of Lemma \ref{lem:important} we see that for all $z_2\in \mathcal{S}$ we have
\begin{equation*}
2\big\langle T^*(u(z_1)-u(z_0))-(z_1-z_0),(z_2-z_0)\big\rangle\leq \norm{z_1-z_0}^2-\norm{u(z_1)-u(z_0)}^2.
\end{equation*}
Take any $w\in\mathbb{R}^n$ and let $z_1=z_0+tw$. Let $t$ tend to zero. Then the above inequality implies that
\begin{equation*}
\langle T^* Du(z_0)w-w,z_2-z_0\rangle\leq 0.
\end{equation*}
As this holds true for any $w\in\mathbb{R}^n$, applying this inequality to $-w$, we infer that the above inequality is an equality, i.e.
\begin{equation*}
\langle T^* Du(z_0)w-w,z_2-z_0\rangle= 0.
\end{equation*}
If follows that for all $v\in \mathrm{span}\{z_2-z_0|z_2\in\mathcal{S}\}=V$
\begin{equation*}
\langle T^* Du(z_0)w-w,v\rangle= 0,
\end{equation*}
and consequently $\langle Q Du(z_0)w- TPw, Tv\rangle=0$. The assertion follows.
\end{proof}

\begin{corollary}\label{col:diff}
Suppose that $\mathcal{S}$ is of dimension $m$. Then $u$ is differentiable in the relative interior of $\mathcal{S}$.
\end{corollary}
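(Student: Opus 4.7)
The plan is short because this corollary follows almost immediately from Lemma \ref{lem:diff}. The only observation needed is that in the case $\dim\mathcal{S}=m$, the orthogonal projection $Q$ appearing in that lemma degenerates to the identity, so the differentiability statement about $Qu$ becomes a differentiability statement about $u$ itself.

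More precisely, I would first note that the tangent space $V$ of $\mathcal{S}$ has dimension $m$, and recall that by construction $T\colon V\to\mathbb{R}^m$ is a linear isometry. Since $T$ is an isometric linear map between $m$-dimensional Euclidean spaces, $T(V)=\mathbb{R}^m$. Consequently the orthogonal projection $Q\colon\mathbb{R}^m\to T(V)$ is the identity on $\mathbb{R}^m$, and hence $Qu=u$ everywhere on $\mathbb{R}^n$.

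Now I would simply invoke Lemma \ref{lem:diff}, which asserts that $Qu$ is differentiable at every point of $\mathrm{int}\mathcal{S}$, with derivative $TP$ at such a point $z_0$. Note that the differentiability established in that lemma is differentiability of $Qu$ as a map from $\mathbb{R}^n$ (the estimate
\begin{equation*}
\limsup_{z_1\to z_0}\frac{\norm{Q(u(z_1)-u(z_0))-TP(z_1-z_0)}}{\norm{z_1-z_0}}=0
\end{equation*}
is taken with $z_1$ ranging over all of $\mathbb{R}^n$), so this is genuine Fr\'echet differentiability at $z_0$, not merely differentiability along $V$. Combining with $Qu=u$ gives that $u$ is differentiable at every $z_0\in\mathrm{int}\mathcal{S}$, with $Du(z_0)=TP$.

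There is no real obstacle here; the only thing to verify with care is that the $Q$ in Lemma \ref{lem:diff} is indeed the identity on $\mathbb{R}^m$ when $\dim V=m$, which is a direct consequence of the dimension count and the fact that $T$ is an isometric embedding.
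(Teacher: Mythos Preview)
Your proposal is correct and matches the paper's intended reasoning exactly: the corollary is stated without proof immediately after Lemma~\ref{lem:diff}, and the implicit argument is precisely that when $\dim\mathcal{S}=m$ the isometry $T$ is surjective onto $\mathbb{R}^m$, so $Q=\mathrm{Id}$ and the differentiability of $Qu$ is the differentiability of $u$.
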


\begin{lemma}\label{lem:boundary}
Let $\mathcal{S}_1, \mathcal{S}_2$ be two distinct leaves of a $1$-Lipschitz map $u\colon \mathbb{R}^n\to\mathbb{R}^m$. Then, 
\begin{equation*}
\mathcal{S}_1\cap\mathcal{S}_2\subset\partial\mathcal{S}_1\cap\partial\mathcal{S}_2.
\end{equation*}
\end{lemma}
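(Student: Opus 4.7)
The plan is to argue by contradiction. I would fix $x\in\mathcal{S}_1\cap\mathcal{S}_2$ and, by symmetry between the two leaves, suppose that $x$ lies in the relative interior of $\mathcal{S}_1$, so that $\sigma_1:=\mathrm{dist}(x,\partial\mathcal{S}_1)>0$; then I would aim to derive $\mathcal{S}_1=\mathcal{S}_2$, which would contradict distinctness.

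The first step is to squeeze an exact linear identity out of Lemma \ref{lem:important}. I would apply that lemma with $x_1:=x$ and $x_2:=y$ for arbitrary $y\in\mathcal{S}_2$. Because both $x$ and $y$ lie in $\mathcal{S}_2$ and $u$ restricts to an isometry there, $\|u(x)-u(y)\|=\|x-y\|$, so the right-hand sides of the inequalities in the lemma vanish. Combined with $\sigma_1>0$, the second inequality forces $P_1 T_1^*(u(x)-u(y))=P_1(x-y)$. Substituting $u(x)-u(y)=T_2(x-y)$ and observing that $T_1^*$ maps into $V_1$, so that $P_1 T_1^*=T_1^*$, gives the key identity
\begin{equation*}
T_1^* T_2(x-y)=P_1(x-y)\quad\text{for every }y\in\mathcal{S}_2.
\end{equation*}

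The second step is to use this identity to show that $u$ is an isometry on $\mathcal{S}_1\cup\mathcal{S}_2$. Given $y\in\mathcal{S}_2$ and $z\in\mathcal{S}_1$, I would set $a:=x-y$ and $b:=z-x\in V_1$, so that $P_1 b=b$. Writing $u(y)-u(z)=-T_2 a-T_1 b$ and expanding the square,
\begin{equation*}
\|u(y)-u(z)\|^2=\|a\|^2+\|b\|^2+2\langle T_1^* T_2 a,b\rangle=\|a\|^2+\|b\|^2+2\langle a,b\rangle=\|y-z\|^2,
\end{equation*}
where the first equality uses that $T_1$ and $T_2$ are isometries and the second uses the key identity together with $P_1 b=b$.

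Finally, the very definition of a leaf delivers the contradiction: since $u|_{\mathcal{S}_1\cup\mathcal{S}_2}$ is an isometry, the maximality of $\mathcal{S}_1$ forces $\mathcal{S}_2\subseteq\mathcal{S}_1$, and then the maximality of $\mathcal{S}_2$ combined with the isometricity of $u|_{\mathcal{S}_1}$ forces $\mathcal{S}_1\subseteq\mathcal{S}_2$, so $\mathcal{S}_1=\mathcal{S}_2$. The main obstacle is the first step: the inequality in Lemma \ref{lem:important} must be upgraded to an exact linear relation, which requires the combined strength of the interior assumption $\sigma_1>0$ and the isometricity of $u$ on $\mathcal{S}_2$ to annihilate the error term, together with the observation that $T_1^*$ takes values in $V_1$ so as to eliminate $P_1$ from the left-hand side; once this is in hand, the remainder is a short expansion and an appeal to maximality.
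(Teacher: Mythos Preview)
Your proof is correct and follows essentially the same approach as the paper: both derive from the interior assumption and the $1$-Lipschitz condition the identity $\langle T_1 v_1, T_2 v_2\rangle=\langle v_1,v_2\rangle$ (equivalently $T_1^*T_2=P_1$ on $V_2$), and then use it to show $u$ is an isometry on $\mathcal{S}_1\cup\mathcal{S}_2$, contradicting maximality. The only cosmetic differences are that you invoke Lemma~\ref{lem:important} as a black box rather than re-deriving the inequality, and you compute $\|u(y)-u(z)\|$ directly instead of first constructing an isometry $S$ on $V_1+V_2$ as the paper does.
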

\begin{proof}

We shall first show that there is no point belonging to $ \mathrm{int}\mathcal{S}_1\cap  \mathcal{S}_2$. 
For this, suppose that $x_0\in\mathrm{int}\mathcal{S}_1\cap\mathcal{S}_2$. Let $x_1\in\mathcal{S}_1$ and $x_2\in\mathcal{S}_2$. There exists isometries $T_1$ and $T_2$ on the tangent spaces $V_1$ and $V_2$ of $\mathcal{S}_1$ and $\mathcal{S}_2$ respectively such that
\begin{equation*}
u(x_1)-u(x_0)=T_1(x_1-x_0)\text{ and }u(x_2)-u(x_0)=T_2(x_2-x_0).
\end{equation*}
We may write
\begin{align*}
&\norm{x_1-x_0}^2+\norm{x_2-x_0}^2-2\langle T_1(x_1-x_0),T_2(x_2-x_0)\rangle=\norm{u(x_1)-u(x_2)}^2 \leq\\
&\leq \norm{x_1-x_2}^2=\norm{x_1-x_0}^2+\norm{x_2-x_0}^2-2\langle x_1-x_0,x_2-x_0\rangle.
\end{align*}
Hence
\begin{equation*}
\langle x_1-x_0,x_2-x_0\rangle\leq \langle T_1(x_1-x_0),T_2(x_2-x_0)\rangle.
\end{equation*}
As $x_0\in\mathrm{int}\mathcal{S}_1$ and the inequality holds true for all $x_1\in\mathcal{S}_1$, we actually have equality above for $x_1$ sufficiently close to $x_0$. It follows that for all $v_1\in V_1$ and $v_2\in V_2$,
\begin{equation}\label{eqn:eqiso}
\langle v_1,v_2\rangle=\langle T_1v_2,T_2v_2\rangle.
\end{equation}
Hence there exists an isometry $S\colon V_1+ V_2\to\mathbb{R}^m$ that extends both $T_1$ and $T_2$. 
Indeed, define a linear map
\begin{equation*}
S\colon V_1+V_2 \to\mathbb{R}^m
\end{equation*}
by the formula
\begin{equation*}
S(v_1+v_2+v_3)=T_1(v_1)+T_2(v_2)
\end{equation*}
where
\begin{equation*}
v_1\in V_1\cap V_2^{\perp}, v_2\in V_1\cap V_2.
\end{equation*}
We claim that $S$ is a well-defined isometry. Indeed, by (\ref{eqn:eqiso}) and by orthogonality we see that if $v_2\in V_1\cap V_2$, then 
\begin{equation*}
\norm{v_2}^2=\langle T_1v_2,T_2v_2\rangle \text{, so }T_1v_2=T_2v_2.
\end{equation*}
We have
\begin{equation*}
\norm{S(v_1+v_2)}^2=\norm{v_1}^2+\norm{v_2}^2+2\langle T_1v_1,T_2v_2\rangle=\norm{v_1+v_2}^2.
\end{equation*}
Moreover, by definition $S$ is an extension of both $T_1$ and $T_2$.

Define an affine map $v\colon x_0+V_1 +V_2\to\mathbb{R}^m$ by the formula 
\begin{equation*}
v(x)=S(x-x_0)+b.
\end{equation*}
Then $v|_{\mathcal{S}_1}=u$ and $v|_{\mathcal{S}_2}=u$.

Choose any points $x\in\mathcal{S}_1$ and $y\in\mathcal{S}_2$. Then 
\begin{equation*}
\norm{u(x)-u(y)}=\norm{v(x)-v(y)}=\norm{S(x-y)}=\norm{x-y}.
\end{equation*}
Thus $u$ is isometric on $\mathcal{S}_1\cup\mathcal{S}_2$. By maximality $\mathcal{S}_1=\mathcal{S}_1\cup\mathcal{S}_2=\mathcal{S}_2$, contradicting the distinctness of the two leaves. 
Hence
\begin{equation*}
\mathcal{S}_1\cap\mathcal{S}_2\subset \partial \mathcal{S}_1\cap \mathcal{S}_2.
\end{equation*}
Repeating the above argument with $\mathcal{S}_1$ and $\mathcal{S}_2$ interchanged, we see that
\begin{equation*}
\mathcal{S}_1\cap\mathcal{S}_2\subset \big(\partial \mathcal{S}_1\cap \mathcal{S}_2\big) \cap \big(\partial \mathcal{S}_2\cap \mathcal{S}_1\big)=\partial \mathcal{S}_1\cap\partial\mathcal{S}_2.
\end{equation*}
\end{proof}

\begin{remark}
We may proceed in the first part of the above proof alternatively. Namely, let $x_0\in\mathcal{S}_1\cap \mathcal{S}_2$. 
Then Lemma \ref{lem:diff} implies that $Q_1u$ is differentiable at $x_0$ with the derivative given by
\begin{equation*}
DQ_1u(x_0)=T_1P_1,
\end{equation*}
where $T_1$ is an isometry such that $u(x)=T_1(x-x_0)+b$ for all $x\in\mathcal{S}_1$, $P_1$ is the orthogonal projection onto the tangent space $V_1$ of $\mathcal{S}_1$ and $Q_1$ is the orthogonal projection onto $\mathrm{im}T_1$. In other words
\begin{equation}\label{eqn:limit}
\lim_{x\to x_0}\frac{Q_1u(x)-Q_1u(x_0)-T_1P_1(x-x_0)}{\norm{x-x_0}}=0.
\end{equation}
For $x\in\mathcal{S}_2$ we may write
\begin{equation*}
u(x)=T_2(x-x_0)+b
\end{equation*}
for an isometry $T_2$. Let $V_2$ be the tangent space of $\mathcal{S}_2$. If $x\in\mathcal{S}_2$, then
\begin{equation}\label{eqn:difference}
\frac{Q_1u(x)-Q_1u(x_0)-T_1P_1(x-x_0)}{\norm{x-x_0}}=(Q_1T_2-T_1P_1)\bigg(\frac{x-x_0}{\norm{x-x_0}}\bigg).
\end{equation}
Observe that if $x_1\in \mathrm{int}\mathcal{S}_2$, then, as $x-x_1=x-x_0-(x_1-x_0)$,
\begin{equation}\label{eqn:v}
V_2=\mathrm{span}\{x-x_1|x\in \mathcal{S}_2\}\subset\mathrm{span}\{x-x_0|x\in\mathcal{S}_2\}\subset V_2.
\end{equation}
Let $x\in\mathcal{S}_2$. For $t\in [0,1]$ let
\begin{equation*}
x_t=x_0+t(x-x_0).
\end{equation*}
By convexity of leaves, $x_t\in\mathcal{S}_2$. Observe also that 
\begin{equation*}
\lim_{t\to 0}x_t=x_0.
\end{equation*}
It follows by (\ref{eqn:limit}), (\ref{eqn:difference}) and by (\ref{eqn:v}) that 
\begin{equation}\label{eqn:equ}
Q_1T_2v=T_1P_1v\text{ for all }v\in V_2.
\end{equation}
It follows that for $v_1\in V_1$ and $v_2\in V_2$
\begin{equation*}
\langle T_1v_1,T_2v_2\rangle=\langle T_1v_1,Q_1T_2v_2\rangle=\langle T_1v_1,T_1P_1v_2\rangle=\langle v_1,v_2\rangle.
\end{equation*}
We complete the proof as before.
\end{remark}

\begin{corollary}\label{col:notdif}
If $z_0\in\mathbb{R}^n$ belongs to at least two distinct leaves of a $1$-Lipschitz mapping $u\colon\mathbb{R}^n\to\mathbb{R}^m$ then $u$ is not differentiable at $z_0$.
\end{corollary}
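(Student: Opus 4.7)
The plan is to argue by contradiction and reduce the problem to the situation already handled inside the proof of Lemma \ref{lem:boundary}. Suppose $u$ is differentiable at $z_0$ and $z_0 \in \mathcal{S}_1 \cap \mathcal{S}_2$ for two distinct leaves $\mathcal{S}_1, \mathcal{S}_2$, with associated linear isometries $T_i \colon V_i \to \mathbb{R}^m$ as in the setup before Lemma \ref{lem:important}. The goal is to show that $Du(z_0)|_{V_i} = T_i$ for $i = 1, 2$. Once this is in hand, the Lipschitz estimate $\norm{Du(z_0)(v_1 - v_2)} \le \norm{v_1 - v_2}$ for $v_1 \in V_1, v_2 \in V_2$, expanded using $\norm{T_i v_i} = \norm{v_i}$, gives $\langle v_1, v_2 \rangle \le \langle T_1 v_1, T_2 v_2 \rangle$; replacing $v_1$ by $-v_1$ reverses the inequality, producing the equality
\[
\langle v_1, v_2 \rangle = \langle T_1 v_1, T_2 v_2 \rangle \quad \text{for all } v_1 \in V_1,\ v_2 \in V_2,
\]
which is exactly equation (\ref{eqn:eqiso}). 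The remainder of the proof of Lemma \ref{lem:boundary}---constructing the common linear isometric extension $S \colon V_1 + V_2 \to \mathbb{R}^m$ of $T_1$ and $T_2$, lifting it to an affine isometric extension $v$ of $u|_{\mathcal{S}_1 \cup \mathcal{S}_2}$, and invoking maximality of the leaves---then forces $\mathcal{S}_1 = \mathcal{S}_2$, contradicting distinctness.

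The crucial input is therefore the identity $Du(z_0)|_{V_i} = T_i$. For any $x \in \mathcal{S}_i$, convexity of $\mathcal{S}_i$ together with $z_0 \in \mathcal{S}_i$ guarantees $z_0 + t(x - z_0) \in \mathcal{S}_i$ for all $t \in [0, 1]$, and $u$ being an affine isometry on $\mathcal{S}_i$ with linear part $T_i$ gives $u(z_0 + t(x - z_0)) = u(z_0) + t\, T_i(x - z_0)$. Dividing by $t$ and passing to the limit $t \to 0^+$, the assumed differentiability of $u$ at $z_0$ yields $Du(z_0)(x - z_0) = T_i(x - z_0)$. Since $z_0 \in \mathcal{S}_i$, one has $\mathrm{Aff}(\mathcal{S}_i) = z_0 + V_i$, so every $v \in V_i$ is an affine combination $\sum_k \alpha_k(x_k - z_0)$ with $x_k \in \mathcal{S}_i$ and $\sum_k \alpha_k = 1$; linearity of $Du(z_0)$ and of $T_i$ then extends the equality from the cone $\{x - z_0 \mid x \in \mathcal{S}_i\}$ to all of $V_i$.

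The main obstacle is precisely this first step. By Lemma \ref{lem:boundary}, $z_0$ lies on the relative boundary of both $\mathcal{S}_1$ and $\mathcal{S}_2$, so the relative-interior statement of Lemma \ref{lem:diff} does not directly deliver $Du(z_0)|_{V_i} = T_i$. Convexity of the leaves is what rescues the argument: the segment from $z_0$ to any $x \in \mathcal{S}_i$ lies entirely in $\mathcal{S}_i$, so the one-sided directional derivatives along a spanning family of directions in $V_i$ can be computed exactly, and the assumed two-sided differentiability of $u$ at $z_0$ promotes these one-sided computations to the full linear identity on $V_i$. After that, the rest of the proof is a mechanical rerun of Lemma \ref{lem:boundary}.
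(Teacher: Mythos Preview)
Your proof is correct. The overall architecture matches the paper's---assume differentiability, extract enough information about $Du(z_0)$ on each $V_i$ to derive the identity $\langle v_1,v_2\rangle=\langle T_1v_1,T_2v_2\rangle$, and then rerun the construction of Lemma~\ref{lem:boundary}---but the execution of the first two steps differs in a way worth noting.

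The paper invokes the second clause of Lemma~\ref{lem:diff}, which only yields $Q_iDu(z_0)=T_iP_i$ (a statement about the projection of $Du(z_0)$ onto $\mathrm{im}\,T_i$), and then recovers the key identity via the argument in the Remark following Lemma~\ref{lem:boundary}. You instead compute one-sided directional derivatives along segments in $\mathcal{S}_i$ to obtain the \emph{stronger} fact $Du(z_0)|_{V_i}=T_i$ directly, and then derive the identity from the operator bound $\norm{Du(z_0)}\leq 1$ via the expansion of $\norm{T_1v_1-T_2v_2}^2\leq\norm{v_1-v_2}^2$. Your route is more elementary and self-contained: it bypasses the projection $Q_i$ entirely and does not need the inequality machinery of Lemma~\ref{lem:important} that underlies Lemma~\ref{lem:diff}. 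The paper's route, on the other hand, reuses a lemma it has already packaged for other purposes. One minor remark: your sentence about ``affine combinations $\sum_k\alpha_k(x_k-z_0)$ with $\sum_k\alpha_k=1$'' is correct but slightly roundabout; since $Du(z_0)$ and $T_i$ are linear and agree on the spanning set $\{x-z_0:x\in\mathcal{S}_i\}$, they agree on $V_i$ immediately.
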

\begin{proof}
Clearly, any zero dimensional leaf does not intersect any other leaf. Hence, $z_0$ belongs to two distinct leaves $\mathcal{S}_1,\mathcal{S}_2$ of non-empty relative interiors. Suppose that $u$ is differentiable at $z_0$.
Lemma \ref{lem:diff} tells us that
\begin{equation*}
QDu(z_0)=TP,
\end{equation*}
where $T$ is an isometry such $u(z)-u(z_0)=T(z-z_0)$ for all $z\in\mathcal{S}_1$, $P$ is the orthogonal projection onto the tangent space of $\mathcal{S}_1$ and $Q$ is the orthogonal projection onto $\mathrm{im}T$. Arguing as in Lemma \ref{lem:boundary}, we infer that $\mathcal{S}_1=\mathcal{S}_2$. This contradiction completes the proof.
\end{proof}

\begin{definition}
The set of points belonging to at least two distinct leaves of a $1$-Lipschitz function $u\colon\mathbb{R}^n\to\mathbb{R}^m$ we shall denote by $B(u)$.
\end{definition}

\begin{corollary}\label{col:unique}
For any $1$-Lipschitz function $u\colon\mathbb{R}^n\to\mathbb{R}^m$ the set $B(u)$ is of Lebesgue measure zero.
\end{corollary}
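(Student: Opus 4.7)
The proof is essentially an immediate consequence of Corollary \ref{col:notdif} combined with Rademacher's theorem. Here is the plan.

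First, I would recall that Corollary \ref{col:notdif} already establishes the containment
\begin{equation*}
B(u)\subset\{z_0\in\mathbb{R}^n\mid u\text{ is not differentiable at }z_0\}.
\end{equation*}
Indeed, by definition of $B(u)$, any point $z_0\in B(u)$ belongs to at least two distinct leaves of $u$, and Corollary \ref{col:notdif} guarantees that $u$ cannot be differentiable at such a point.

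Next, I would invoke Rademacher's theorem: since $u\colon\mathbb{R}^n\to\mathbb{R}^m$ is $1$-Lipschitz (in particular, locally Lipschitz), it is differentiable at $\lambda$-almost every $z_0\in\mathbb{R}^n$. Therefore the set of points of non-differentiability of $u$ has Lebesgue measure zero, and so does its subset $B(u)$.

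There is no real obstacle here; the content of the statement has already been concentrated in Lemma \ref{lem:boundary} and its consequence Corollary \ref{col:notdif}. The only subtlety worth flagging is measurability of $B(u)$ itself, but this is not required for the conclusion: $B(u)$ is contained in a Lebesgue null set, hence is itself Lebesgue measurable with measure zero by completeness of the Lebesgue $\sigma$-algebra.
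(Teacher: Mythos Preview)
Your proof is correct and follows exactly the same approach as the paper's own proof: use Corollary \ref{col:notdif} to show $B(u)$ lies in the non-differentiability set of $u$, then apply Rademacher's theorem. Your remark on measurability via completeness of the Lebesgue $\sigma$-algebra is a welcome addition, though the paper does not make it explicit.
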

\begin{proof}
Corollary \ref{col:notdif} implies that $B(u)$ is contained in the set of non-differentiability of $u$. Rademacher's theorem (see e.g. \cite{Federer}) states that the latter is of Lebesgue measure zero.
\end{proof}

\section{Lipschitz change of variables}\label{sec:varia}

Let us recall a lemma taken from \cite[\S 3.2.9]{Federer}.

\begin{lemma}\label{lem:coo}
Let $u\colon\mathbb{R}^n\to\mathbb{R}^m$ be a continuous function. Then the set
\begin{equation*}
\{x\in\mathbb{R}^n|u\text{ is differentiable at }x\text{ and }Du(x)\text{ has maximal rank}\}
\end{equation*}
admits a countable Borel covering $(G_i)_{i=1}^{\infty}$  such that for any $i\in\mathbb{N}$ there exist an orthogonal projection $p\colon\mathbb{R}^n\to\mathbb{R}^{n-m}$ and Lipschitz maps
\begin{equation*}
w\colon\mathbb{R}^n\to\mathbb{R}^m\times\mathbb{R}^{n-m}\text{, }v\colon\mathbb{R}^m\times\mathbb{R}^{n-m}\to\mathbb{R}^n
\end{equation*}
such that
\begin{equation*}
w(x)=(u(x),p(x))\text{ and }v(w(x))=x\text{ for all }x\in G_i.
\end{equation*}
\end{lemma}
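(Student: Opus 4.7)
The plan is to locally invert the map $x\mapsto (u(x),p(x))$ near each differentiability point of maximal rank by choosing $p$ among finitely many coordinate projections, and then to extend via Kirszbraun's theorem. Throughout, all sets will be Borel thanks to continuity of $u$ together with Borel measurability of $Du$ on its domain of differentiability.

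For each $x$ at which $u$ is differentiable with $\mathrm{rank}\,Du(x)=m$, the subspace $\ker Du(x)$ has dimension $n-m$ and hence projects isomorphically onto at least one of the $N=\binom{n}{m}$ coordinate $(n-m)$-dimensional subspaces of $\mathbb{R}^n$; equivalently, one of the corresponding orthogonal projections $p_1,\dots,p_N$ makes
\begin{equation*}
L^{(j)}_x := (Du(x),p_j)\colon \mathbb{R}^n\to \mathbb{R}^m\times\mathbb{R}^{n-m}
\end{equation*}
invertible. For each $j\in\{1,\dots,N\}$ and $k\in\mathbb{N}$ I would introduce the set $D^{(j)}_k$ of those $x$ at which $L^{(j)}_x$ is invertible with $\|(L^{(j)}_x)^{-1}\|\leq k$, with $\|Du(x)\|\leq k$, and for which $\|u(y)-u(x)-Du(x)(y-x)\|\leq \tfrac{1}{3k}\|y-x\|$ holds whenever $\|y-x\|\leq 1/k$; this last condition is Borel by testing on a countable dense set and invoking continuity of $u$. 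By the definition of differentiability, every point of the original set lies in some $D^{(j)}_k$. Intersecting with a countable cover of $\mathbb{R}^n$ by open balls of radius $1/(3k)$ yields the desired countable Borel covering $\{G_i\}$, each piece coming equipped with a single fixed projection $p=p_j$.

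On such a piece $G$ set $w_0:=(u,p)$. For $x,y\in G$ one has $\|y-x\|<2/(3k)<1/k$, so
\begin{equation*}
\|w_0(y)-w_0(x)-L_x(y-x)\|=\|u(y)-u(x)-Du(x)(y-x)\|\leq \tfrac{1}{3k}\|y-x\|.
\end{equation*}
Combined with $\|L_x(y-x)\|\geq k^{-1}\|y-x\|$ and $\|L_x\|\leq\sqrt{k^2+1}$, this yields the bi-Lipschitz bound
\begin{equation*}
\tfrac{2}{3k}\|x-y\|\leq \|w_0(x)-w_0(y)\|\leq \bigl(\sqrt{k^2+1}+\tfrac{1}{3k}\bigr)\|x-y\|,
\end{equation*}
so that $w_0|_G$ admits a Lipschitz inverse $w_0|_G^{-1}\colon w_0(G)\to G$. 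I would conclude by applying Kirszbraun's extension theorem twice: once to extend $w_0|_G$ to a Lipschitz $w\colon\mathbb{R}^n\to\mathbb{R}^m\times\mathbb{R}^{n-m}$, and once to extend $w_0|_G^{-1}$ to a Lipschitz $v\colon\mathbb{R}^m\times\mathbb{R}^{n-m}\to\mathbb{R}^n$. By construction $w(x)=(u(x),p(x))$ and $v(w(x))=x$ for every $x\in G$.

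The main obstacle is the bookkeeping: one must simultaneously control the choice of projection $p$ for which $(Du(x),p)$ is invertible, the norm of the inverse, the error in the differentiability expansion, and Borel measurability of the resulting strata. The observation that resolves all of this cleanly is that only finitely many candidate projections need be considered — namely those onto coordinate subspaces — so the partition is obtained by elementary disjointification rather than by any measurable selection argument.
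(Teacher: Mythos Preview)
The paper does not supply its own proof of this lemma; it is quoted from Federer \cite[\S 3.2.9]{Federer} and used as a black box. Your argument is correct and is essentially the proof one finds there: stratify the set of maximal-rank differentiability points by (i) which of the finitely many coordinate projections $p_j$ makes $(Du(x),p_j)$ invertible, (ii) a quantitative bound on the inverse and on $\norm{Du(x)}$, and (iii) a uniform remainder bound in the first-order Taylor expansion; then localise to small balls and read off the bi-Lipschitz estimate for $(u,p)$. The only cosmetic difference is that Federer phrases the extension step without invoking Kirszbraun by name, but any Lipschitz extension theorem (McShane--Whitney componentwise would also do) suffices here since the target is $\mathbb{R}^n$.
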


\begin{lemma}\label{lem:cover}
Let $u\colon\mathbb{R}^n\to\mathbb{R}^m$ be a Lipschitz function, $p\in\mathbb{R}^m$ and let
\begin{equation*}
S_p=\{x\in\mathbb{R}^n|u(x)=p\}
\end{equation*}
be the level set. Then the set
\begin{equation*}
S_p\cap\{x\in\mathbb{R}^n| u\text{ is differentiable at }x\text{ and }Du(x)\text{ has maximal rank}\}
\end{equation*}
has a countable Borel covering $(S_p^i)_{i=1}^{\infty}$ of bounded sets such that for all $i\in\mathbb{N}$ there exist Lipschitz functions $w\colon\mathbb{R}^n\to\mathbb{R}^{n-m}$ and $v\colon\mathbb{R}^{n-m}\to\mathbb{R}^n$ satisfying
\begin{equation*}
v(w(x))=x \text{ for all }x\in S_p^i.
\end{equation*}
\end{lemma}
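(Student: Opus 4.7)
The plan is to deduce this directly from Lemma \ref{lem:coo} by restricting the maps produced there to the single value $u(x) = p$, and then to obtain boundedness by intersecting with closed balls.

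First I would apply Lemma \ref{lem:coo} to $u$ to obtain a countable Borel covering $(G_i)_{i=1}^\infty$ of the set of points where $u$ is differentiable with maximal rank, together with, for each $i$, an orthogonal projection $p_i\colon\mathbb{R}^n\to\mathbb{R}^{n-m}$ and Lipschitz maps $w_i\colon\mathbb{R}^n\to\mathbb{R}^m\times\mathbb{R}^{n-m}$, $v_i\colon\mathbb{R}^m\times\mathbb{R}^{n-m}\to\mathbb{R}^n$ with $w_i(x)=(u(x),p_i(x))$ and $v_i(w_i(x))=x$ for all $x\in G_i$. Set $T_i=S_p\cap G_i$; these are Borel sets whose union covers the set in the statement.

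Next, for each $i$ I would define
\begin{equation*}
\tilde w_i\colon\mathbb{R}^n\to\mathbb{R}^{n-m},\quad \tilde w_i(x)=p_i(x),
\end{equation*}
and
\begin{equation*}
\tilde v_i\colon\mathbb{R}^{n-m}\to\mathbb{R}^n,\quad \tilde v_i(y)=v_i(p,y),
\end{equation*}
both of which are Lipschitz (the former as an orthogonal projection, the latter as a composition of the Lipschitz $v_i$ with the affine inclusion $y\mapsto(p,y)$). For any $x\in T_i$ we have $u(x)=p$, so $w_i(x)=(p,p_i(x))=(p,\tilde w_i(x))$, and therefore
\begin{equation*}
\tilde v_i(\tilde w_i(x))=v_i(p,p_i(x))=v_i(w_i(x))=x,
\end{equation*}
which is the required identity.

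Finally, to secure boundedness I would replace each $T_i$ by the countable collection $T_{i,k}=T_i\cap \overline{B}(0,k)$ for $k\in\mathbb{N}$ and re-index the resulting family as $(S_p^i)_{i=1}^\infty$; the associated Lipschitz maps remain the same $\tilde w_i,\tilde v_i$ (the boundedness restriction affects only the domain of validity of the identity $\tilde v_i\circ\tilde w_i=\mathrm{id}$, which still holds on each $S_p^i$). There is no real obstacle here beyond bookkeeping: the content lies entirely in Lemma \ref{lem:coo}, and the only observation is that on the level set $S_p$ the first coordinate of $w_i(x)$ is frozen at $p$, so one can drop it and obtain a Lipschitz left inverse defined on $\mathbb{R}^{n-m}$ alone.
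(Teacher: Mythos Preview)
Your proof is correct and follows essentially the same approach as the paper: apply Lemma~\ref{lem:coo}, intersect the sets $G_i$ with the level set $S_p$, and freeze the first coordinate at $p$ to define the reduced maps $\tilde w_i,\tilde v_i$. Your additional step of intersecting with closed balls to enforce boundedness is a detail the paper's own proof actually leaves implicit.
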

\begin{proof}
We apply the above lemma and obtain a countable covering consisting of Borel sets $G_i$, orthogonal projections $\pi_i\colon\mathbb{R}^n\to\mathbb{R}^{n-m}$ and Lipschitz maps 
\begin{equation*}
w_i\colon\mathbb{R}^n\to\mathbb{R}^m\times\mathbb{R}^{n-m}\text{, }v_i\colon\mathbb{R}^m\times\mathbb{R}^{n-m}\to\mathbb{R}^n
\end{equation*}
such that
\begin{equation*}
w_i(x)=(u(x),\pi_i(x))\text{ and }v_i(w_i(x))=x\text{ for all }x\in G_i.
\end{equation*}
The sets $G_i\cap S_p$ form a countable Borel covering of $S_p$. For any $i\in\mathbb{N}$ define 
\begin{equation*}
w\colon\mathbb{R}^n\to\mathbb{R}^{n-m}\text{ and }
v\colon \mathbb{R}^{n-m}\to\mathbb{R}^n
\end{equation*}
by $w=\pi\circ w_i$, where $\pi\colon \mathbb{R}^m\times\mathbb{R}^{n-m}\to \mathbb{R}^{n-m}$ is the projection on the second variable, and $v(x)=v_i(p,x)$ for $x\in\mathbb{R}^{n-m}$.
\end{proof}

Choose a countable dense set $Q$ in $\mathbb{R}^m$.

\begin{definition}
Let $p\in Q$. Let $u\colon\mathbb{R}^n\to\mathbb{R}^m$ be a $1$-Lipschitz function and let $(S_p^i)_{i=1}^{\infty}$ be the Borel cover of Lemma \ref{lem:cover} associated to the level set 
\begin{equation*}
S_p=\{x\in\mathbb{R}^n|u(x)=p\}.
\end{equation*}
For each $i,j\in\mathbb{N}$ let the \emph{cluster}
\begin{equation*}
T_{pij}
\end{equation*}
denote the union of all $m$-dimensional leaves $\mathcal{S}$ of $u$ which intersect $S_p^i$ and for which the point of intersection $z\in S_p^i$ is separated from the boundary of the leaf by distance at least $1/j$. 
Denote by 
\begin{equation*}
\mathrm{int}T_{pij}
\end{equation*}
the union of the interiors of all $m$-dimensional leaves $\mathcal{S}$ of $u$ as above.
\end{definition}

\begin{lemma}\label{lem:cluster}
The union of all $m$-dimensional leaves is covered by the clusters 
\begin{equation*}
(T_{pij})_{p\in Q,i,j\in\mathbb{N}}.
\end{equation*}
Moreover for each $m$-dimensional leaf $\mathcal{S}$ and each cluster $T_{pij}$ either
\begin{equation*}
\mathrm{int} \mathcal{S}\cap T_{pij}=\emptyset\text{ or }\mathrm{int} \mathcal{S}\subset T_{pij}.
\end{equation*}
\end{lemma}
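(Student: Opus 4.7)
The plan is to prove both assertions using the differentiability results for leaves of maximal dimension from the previous section, combined with density of $Q$ in $\mathbb{R}^m$ and Lemma \ref{lem:boundary} on pairwise intersections of leaves.

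For the covering claim, fix an $m$-dimensional leaf $\mathcal{S}$. Since $u|_{\mathcal{S}}$ is an affine isometry and $\mathrm{Aff}(\mathcal{S})$ is an $m$-dimensional affine subspace, its image $u(\mathrm{Aff}(\mathcal{S}))$ is an $m$-dimensional affine subspace of $\mathbb{R}^m$, hence all of $\mathbb{R}^m$; consequently $u|_{\mathcal{S}}$ is an affine homeomorphism onto $u(\mathcal{S})$, so $u(\mathrm{int}\mathcal{S})$ coincides with the interior of $u(\mathcal{S})$ in $\mathbb{R}^m$ and is a non-empty open set. Using density of $Q$, I pick $p\in Q\cap u(\mathrm{int}\mathcal{S})$ and let $x_0\in\mathrm{int}\mathcal{S}$ satisfy $u(x_0)=p$. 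By Corollary \ref{col:diff}, $u$ is differentiable at $x_0$, and Lemma \ref{lem:diff} gives $Du(x_0)=TP$, which has rank $m$ since $P$ projects onto the $m$-dimensional tangent space of $\mathcal{S}$ and $T$ is an isometry on that space. Hence $x_0$ lies in the set covered by Lemma \ref{lem:cover} applied to $S_p$, so $x_0\in S_p^i$ for some $i\in\mathbb{N}$. Choosing $j\in\mathbb{N}$ with $1/j\leq \mathrm{dist}(x_0,\partial\mathcal{S})$, the leaf $\mathcal{S}$ fulfils all the defining requirements of $T_{pij}$, so $\mathcal{S}\subset T_{pij}$.

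For the dichotomy, suppose $\mathrm{int}\mathcal{S}\cap T_{pij}$ contains a point $x$. By definition of $T_{pij}$, there is an $m$-dimensional leaf $\mathcal{S}'\subset T_{pij}$ containing $x$ whose point of intersection with $S_p^i$ sits at distance at least $1/j$ from $\partial\mathcal{S}'$. If $\mathcal{S}\neq\mathcal{S}'$, Lemma \ref{lem:boundary} forces $x\in\mathcal{S}\cap\mathcal{S}'\subset\partial\mathcal{S}\cap\partial\mathcal{S}'$, contradicting $x\in\mathrm{int}\mathcal{S}$. Therefore $\mathcal{S}=\mathcal{S}'$, giving $\mathcal{S}\subset T_{pij}$, and in particular $\mathrm{int}\mathcal{S}\subset T_{pij}$. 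I do not anticipate any serious obstacle; the only subtle point is the openness of $u(\mathrm{int}\mathcal{S})$ in $\mathbb{R}^m$, which is immediate once one notes that $u|_{\mathcal{S}}$ is an affine isometry between $m$-dimensional affine spaces and therefore sends relative interior to interior.
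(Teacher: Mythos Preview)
Your proof is correct and follows essentially the same route as the paper's: pick $p\in Q$ in the interior of $u(\mathcal{S})$, locate the corresponding interior point in some $S_p^i$, choose $j$ large enough, and for the dichotomy invoke Lemma~\ref{lem:boundary}. You are in fact slightly more careful than the paper, since you explicitly verify via Corollary~\ref{col:diff} and Lemma~\ref{lem:diff} that $Du(x_0)$ exists and has rank $m$, which is what is needed for $x_0$ to fall under the covering of Lemma~\ref{lem:cover}; the paper's proof leaves this step implicit.
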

\begin{proof}
Let $\mathcal{S}$ be a $m$-dimensional leaf of $u$. Then $u$, if restricted to $\mathcal{S}$, is an isometry onto a subset of $\mathbb{R}^m$ with non-empty interior. Thus, there exists $p\in Q\cap \mathrm{int} u(\mathcal{S})$. In particular $\mathcal{S}\cap S_p\neq \emptyset$. The point $x$ in the intersection belongs to one of the covering sets $S_p^i$ of Lemma \ref{lem:cover} and lies in a positive distance from the boundary of the leaf, so $\mathcal{S}\subset T_{pij}$ for some $j\in\mathbb{N}$. 
If the interior of some other leaf $\mathrm{int}\mathcal{S}$ intersects one of the leaves  comprising the cluster $T_{pij}$, then Lemma \ref{lem:boundary} implies that they are equal and hence $\mathcal{S}\subset T_{pij}$. This completes the proof.
\end{proof}

\begin{lemma}\label{lem:efge}
Each cluster $T_{pij}\subset\mathbb{R}^n$ admits a map
\begin{equation*}
G\colon\mathrm{int} T_{pij}\to\mathbb{R}^{n-m}\times\mathbb{R}^m
\end{equation*}
and its inverse
\begin{equation*}
F\colon G( \mathrm{int}T_{pij}) \to \mathrm{int}T_{pij}
\end{equation*}
such that:
\begin{enumerate}[i)]
\item\label{i:lambda} for each $\lambda>0$ and  $\rho>0$, $G$ is a Lipschitz map on the set
\begin{equation*}
T^{\lambda,\rho}_{pij}=\bigg\{x\in \mathrm{int}T_{pij}\big| \mathrm{dist}(x,\partial \mathcal{S}(x))>\lambda,\norm{u(x)-u(z)}\leq \rho \bigg\};
\end{equation*}
here $\mathcal{S}(x)$ is the unique leaf of $u$ such that $x\in\mathcal{S}(x)$ and $z\in \mathcal{S}(x)$ is the unique point in $\mathcal{S}(x)$ such that $u(z)=p$,
\item for each $\rho>0$ $F$ is Lipschitz on the set $G(T_{pij}^{0,\rho})$,
\item $F(G(x))=x$ for each $x\in\mathrm{int} T_{pij}$,
\item if a leaf $\mathcal{S}\subset T_{pij}$ intersects $S_p^i$ at a point $z$, then each interior point $x\in \mathrm{int}\mathcal{S}$ of the leaf satisfies
\begin{equation}\label{eqn:gie}
G(x)=(w(z),u(x)-u(z)),
\end{equation}
where $w\colon\mathbb{R}^n\to\mathbb{R}^{n-m}$ is the map from Lemma \ref{lem:cover}.
\end{enumerate}
\end{lemma}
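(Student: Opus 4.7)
The plan is to construct $G$ and $F$ via the canonical ``base point'' of each leaf. For $x \in \mathrm{int}T_{pij}$, let $\mathcal{S}(x)$ be the unique leaf containing $x$ in its relative interior, and let $z(x) \in \mathcal{S}(x) \cap S_p^i$ be the unique point with $u(z(x)) = p$; the cluster definition guarantees $\mathrm{dist}(z(x), \partial\mathcal{S}(x)) \geq 1/j$. Let $T_{\mathcal{S}}$ denote the linear isometry associated with leaf $\mathcal{S}$. Define
\begin{equation*}
G(x) = \bigl(w(z(x)),\, u(x) - p\bigr),
\end{equation*}
and $F(\xi,\eta) = v(\xi) + T_{\mathcal{S}(v(\xi))}^{*}\eta$. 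Property (iv) is immediate from the definition of $G$. For (iii), since $v \circ w = \mathrm{id}$ on $S_p^i$ we have $v(w(z(x))) = z(x)$, and on $\mathcal{S}(x)$ the identity $u(y) - p = T_{\mathcal{S}(x)}(y - z(x))$ gives $T_{\mathcal{S}(x)}^{*}(u(x) - p) = x - z(x)$.

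For (i), fix $x_1, x_2 \in T_{pij}^{\lambda,\rho}$ with leaves $\mathcal{S}_i$, isometries $T_i$, and $z_i = z(x_i)$. Writing $z_i = x_i + T_i^{*}(p - u(x_i))$ gives
\begin{equation*}
z_1 - z_2 = (x_1 - x_2) + (T_1^{*} - T_2^{*})(p - u(x_1)) + T_2^{*}(u(x_2) - u(x_1)).
\end{equation*}
As $T_i^{*}$ is an isometry, $\norm{T_2^{*}(u(x_2) - u(x_1))} \leq \norm{x_1 - x_2}$; applying Corollary \ref{col:strength} at $x_1, x_2$ (valid because $\sigma(x_i) > \lambda$) and using that $(Du(x_i))^{*} = T_i^{*}$,
\begin{equation*}
\norm{T_1^{*} - T_2^{*}} \leq \frac{\norm{x_1 - x_2}}{\lambda\sqrt{2}}.
\end{equation*}
Together with $\norm{p - u(x_1)} \leq \rho$ this yields $\norm{z_1 - z_2} \leq \bigl(2 + \rho/(\lambda\sqrt{2})\bigr)\norm{x_1 - x_2}$, so $G = (w \circ z,\, u - p)$ is Lipschitz on $T_{pij}^{\lambda,\rho}$.

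The main obstacle is (ii): on $T_{pij}^{0,\rho}$ the distance $\sigma(x_i)$ may be arbitrarily small, so Corollary \ref{col:strength} cannot be applied uniformly at $x_i$. The remedy is to apply it at the base points $z_i$, where $\sigma(z_i) \geq 1/j$ by construction; since $u(z_1) = u(z_2) = p$,
\begin{equation*}
\norm{T_1^{*} - T_2^{*}} \leq \frac{j}{\sqrt{2}}\norm{z_1 - z_2} \leq \frac{j\,\mathrm{Lip}(v)}{\sqrt{2}}\norm{\xi_1 - \xi_2}.
\end{equation*}
For $(\xi_i,\eta_i) \in G(T_{pij}^{0,\rho})$ with $x_i = F(\xi_i,\eta_i)$, noting $\norm{\eta_2} = \norm{u(x_2) - p} \leq \rho$, we get
\begin{equation*}
\norm{x_1 - x_2} \leq \mathrm{Lip}(v)\norm{\xi_1 - \xi_2} + \norm{\eta_1 - \eta_2} + \rho\norm{T_1^{*} - T_2^{*}},
\end{equation*}
which gives the required Lipschitz bound for $F$.
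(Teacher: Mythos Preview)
Your proof is correct and follows essentially the same approach as the paper. Both define $G$ via the base point $z$ on the level set and $F$ via $v(\xi)+Du(v(\xi))^*\eta$ (your $T_{\mathcal{S}}^*$ equals $Du^*$ on interior points of $m$-dimensional leaves), and both establish the Lipschitz bounds by applying Corollary~\ref{col:strength} at $x_1,x_2$ for item~(i) and at the base points $z_1,z_2$ (where $\sigma\geq 1/j$) for item~(ii); your explicit identification of why the latter shift is necessary matches the paper's reasoning exactly.
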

\begin{proof}
Lemma \ref{lem:boundary} shows that the relative interiors of leaves do not intersect any other leaf. Moreover $u$ is an isometry on each leaf. Therefore, every point $x\in\mathrm{int} T_{pij}$ belongs to a unique leaf and each leaf intersects the level set $S_p$ in a single point $z\in S_p^i$. It follows that (\ref{eqn:gie}) defines a map 
\begin{equation*}
G\colon\mathrm{int} T_{pij}\to\mathbb{R}^{n-m}\times\mathbb{R}^m,
\end{equation*}
on the cluster $\mathrm{int}T_{pij}$. Let $(a,b)\in G(\mathrm{int}T_{pij})$ and let $v$ be the map parametrising $S_p^i$ from Lemma \ref{lem:cover}. Then $v(a)\in S_p^i$ belongs to a relative interior of some leaf $\mathcal{S}$ and lies in a distance at least $1/j$ from the relative boundary of the leaf. Define
\begin{equation*}
F(a,b)=v(a)+Du(v(a))^*(b).
\end{equation*}
Let $x\in \mathrm{int}T_{pij}$ belong to a leaf $\mathcal{S}$ that intersects $S_p$ at a point $z$. Then 
\begin{equation*}
v(w(z))=z
\end{equation*}
and there exists an isometry $T$ such that $u(s_1)-u(s_2)=T(s_1-s_2)$ for all $s_1,s_2\in\mathcal{S}$ and $Du(z)=TP$, where $P$ is the orthogonal projection onto the tangent space of $\mathcal{S}$. We infer that
\begin{equation*}
F(G(x))=F(w(z),u(x)-u(z))=z+PT^*T(x-z)=x.
\end{equation*}
We shall now prove that for $\rho>0$, the mapping $F$ is Lipschitz on  $G(T_{pij}^{0,\rho})$. Define
\begin{equation}\label{eqn:lambda}
\Lambda=\big\{a\in\mathbb{R}^{n-m}\big| (a,0)\in G(T_{pij}^{0,\rho})\big\}.
\end{equation}
We first claim that 
\begin{equation*}
\Lambda\ni a\mapsto Du(v(a))^*\in \mathbb{R}^{n\times m}
\end{equation*}
is a Lipschitz function. Recall that $v(a)\in S_p^i$ is in a distance at least $1/j$ from the relative boundary of a leaf $\mathcal{S}$ that contains $v(a)$. Thus, by Corollary \ref{col:strength} and Lemma \ref{lem:cover}, we infer that for $a,a'\in\Lambda$
\begin{equation*}
\norm{Du(v(a))^*-Du(v(a'))^*}\leq j \norm{v(a)-v(a')}\leq Cj\norm{a-a'}.
\end{equation*}
If $(a,b)\in G(T_{pij}^{0,\rho})$, then $\norm{b}\leq \rho$. Thus $F$ is Lipschitz on $G(T_{pij}^{0,\rho})$.

It remains to prove assertion \ref{i:lambda}) of the lemma. Let $\lambda>0$ and $\rho>0$. We shall first show that the derivative $Du$ is Lipschitz on $T_{pij}^{\lambda,\rho}$. This immediately follows by Corollary \ref{col:strength}.

Let now $x,x'\in T_{pij}^{\lambda,\rho}$ belong to the leaves $\mathcal{S}$ and $\mathcal{S}'$ respectively. By the definition (\ref{eqn:gie}) to prove $1$-Lipschitzness of $G$ it is enough to show that
\begin{equation*}
\norm{w(z)-w(z')}\leq C\norm{x-x'}
\end{equation*}
for some constant $C$. As $w$ is Lipschitz map it is enought to prove that $\norm{z-z'}$ is bounded by a constant times $\norm{x-x'}$. Note that
\begin{equation*}
z=x+Du(x)^*(u(z)-u(x))\text{ and }z'=x'+Du(x')^*(u(z')-u(x')).
\end{equation*}
Thus
\begin{equation*}
\norm{z-z'}\leq \norm{x-x'}+\Big\rVert Du(x)^*(u(z)-u(x))-Du(x')^*(u(z')-u(x'))\Big\lVert.
\end{equation*}
Now, taking into account that $u(z)=u(z')=p$ and writing the latter summand as
\begin{equation*}
\Big\lVert\big(Du(x)^*-Du(x')^*\big)\big(u(z)-u(x)\big)+Du(x')^*(u(x')-u(x))\Big\rVert
\end{equation*}
we may bound it by 
\begin{equation*}
\frac{\rho}{\lambda }\norm{x-x'}+\norm{x-x'}.
\end{equation*}
This concludes the proof that $G$ is Lipschitz on $T_{pij}^{\lambda,\rho}$ and completes the proof of the theorem.
\end{proof}
%


\section{Measurability}\label{sec:measur}

Below $G_{n,k}$ denotes the space of all $k$-dimensional subspaces of $\mathbb{R}^n$. For $V\in G_{n,k}$ and $W\in G_{m,k}$ we denote by $O(V,W)$ the set of all isometries on $V$ with values in $W$ and by $P_V\colon \mathbb{R}^n\to\mathbb{R}^n$ the orthogonal projection onto $V$. Then $G_{n,k}$ is a compact if equipped with the metric given by
\begin{equation*}
d(V,V')=\norm{P_V-P_{V'}},
\end{equation*}
for $V,V'\in G_{n,k}$. Here $\norm{\cdot}$ denotes the operator norm with respect to the Euclidean norm on $\mathbb{R}^n$.

\begin{definition}
For $k\in\{1,\dotsc,m\}$ define $\alpha_k\colon \mathbb{R}^n\to \mathbb{R}\cup\{\infty\}$ by the formula
\begin{align*}
\alpha_k(x)=\sup\Big\{\epsilon\geq 0\big|\exists_{V\in G_{n,k}} \exists_{W\in G_{m,k}}\exists_{T\in O(V,W)}&\forall_{y\in (x+V)\cap B(x,\epsilon)}\\
& u(x)-u(y)=T(x-y) \Big\}, 
\end{align*}
where $B(x,\epsilon)=\{y\in\mathbb{R}^n|\norm{x-y}< \epsilon\}$. Define $\alpha_{m+1}\colon\mathbb{R}^n\to\mathbb{R}$ by $\alpha_{m+1}(x)=0$ for all $x\in\mathbb{R}^n$.
\end{definition}

\begin{lemma}\label{lem:alpha}
For any $k\in\{1,\dotsc,m\}$ the functions $\alpha_k\colon \mathbb{R}^n\to\mathbb{R}\cup\{\infty\}$ are upper semicontinuous.
\end{lemma}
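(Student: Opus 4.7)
The plan is to establish sequential upper semicontinuity. Fix $x_0\in\mathbb{R}^n$ and a sequence $x_n\to x_0$; by passing to a subsequence I may assume $\alpha_k(x_n)\to L\in[0,\infty]$, and it suffices to prove $\alpha_k(x_0)\ge L$. The case $L=0$ is trivial, so pick any $0<\epsilon<L$. For all sufficiently large $n$ one has $\alpha_k(x_n)>\epsilon$; since the set of admissible radii in the supremum defining $\alpha_k$ is downward closed, this means there exist $V_n\in G_{n,k}$, $W_n\in G_{m,k}$ and $T_n\in O(V_n,W_n)$ such that
\begin{equation*}
u(x_n)-u(y)=T_n(x_n-y)\quad\text{for every }y\in (x_n+V_n)\cap B(x_n,\epsilon).
\end{equation*}

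The heart of the argument is a compactness step. By compactness of $(G_{n,k},d)$ and $(G_{m,k},d)$ I pass to a subsequence so that $V_n\to V$ and $W_n\to W$. Since the isometries $T_n$ live on varying domains, I work instead with their extensions $T_nP_{V_n}\colon\mathbb{R}^n\to\mathbb{R}^m$, which all have operator norm at most $1$. Because the unit ball of the finite-dimensional space of linear maps $\mathbb{R}^n\to\mathbb{R}^m$ is compact, a further subsequence produces $T_nP_{V_n}\to S$ in the operator norm. I then set $T:=S|_V$ and check that $T\in O(V,W)$: for $v\in V$, $P_{V_n}v\to P_Vv=v$, so
\begin{equation*}
\norm{Sv}=\lim_n\norm{T_nP_{V_n}v}=\lim_n\norm{P_{V_n}v}=\norm{v},
\end{equation*}
so $T$ is an isometry; and since $T_nP_{V_n}v\in W_n$ and $P_{W_n}\to P_W$, one has $Sv=\lim_nP_{W_n}(T_nP_{V_n}v)=P_WSv\in W$.

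Finally, I pass to the limit in the isometry relation. Given any $y\in (x_0+V)\cap B(x_0,\epsilon)$, set $v=y-x_0\in V$ with $\norm{v}<\epsilon$, and let $y_n:=x_n+P_{V_n}v\in x_n+V_n$. Since $\norm{P_{V_n}v}\to\norm{v}<\epsilon$ and $x_n\to x_0$, eventually $y_n\in B(x_n,\epsilon)$, so $u(x_n)-u(y_n)=-T_nP_{V_n}v$. Continuity of $u$ together with $T_nP_{V_n}\to S$ yields $u(x_0)-u(y)=-Sv=T(x_0-y)$. Thus the triple $(V,W,T)$ certifies $\alpha_k(x_0)\ge\epsilon$, and letting $\epsilon\uparrow L$ gives $\alpha_k(x_0)\ge L$.

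The only nontrivial difficulty is the compactness step for the maps $T_n$: because they are defined on different subspaces $V_n$, one cannot directly invoke compactness of an isometry group on a fixed space. The remedy is to view them as bounded operators on the common ambient space $\mathbb{R}^n$ via precomposition with $P_{V_n}$, extract a norm limit $S$, and then verify that $S$ restricted to the limit subspace $V$ is an isometry into $W$. Everything else is a routine passage to the limit in a linear equation using continuity of $u$.
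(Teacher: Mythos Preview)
Your proof is correct and follows essentially the same strategy as the paper's: extract limits $V,W$ via compactness of the Grassmannians, embed the varying isometries into the ambient space as $T_nP_{V_n}$ to obtain a norm limit, verify the restriction to $V$ lies in $O(V,W)$, and pass to the limit in the defining relation using the pulled-back points $y_n=x_n+P_{V_n}v$. Your presentation is slightly cleaner in that fixing an arbitrary $\epsilon<L$ and invoking downward closure of admissible radii handles the finite and infinite cases uniformly, whereas the paper carries the factor $(1-1/l)$ and treats $L=\infty$ separately.
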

\begin{proof}
Choose a sequence $(x_l)_{l=1}^{\infty}$ that converges to $x_0$ such that there exists a limit
\begin{equation*}
\alpha_k=\lim_{l\to\infty}\alpha_k(x_l).
\end{equation*}
We need to show that $\alpha_k\leq \alpha_k(x_0)$. Suppose first that $\alpha_k<\infty$. We may assume that $\alpha_k(x_l)\in\mathbb{R}$ for each $l\in\mathbb{N}$. From the definition of $\alpha_k(x_l)$ it follows that there exist 
\begin{equation*}
V_l\in G_{n,k}, W_l\in G_{m,k}\text{ and }T_l\in O(V_l,W_l)
\end{equation*}
such that for all $y\in (x_l+V_l)\cap B(x_l,(1-\frac1l)\alpha_k(x_l))$ we have
\begin{equation*}
u(x_l)-u(y)=T_l(x_l-y).
\end{equation*}
By compactness of $G_{n,k}$ and of $G_{m,k}$ we may assume that the sequences of $V_l$ and $W_l$ are convergent to some $V_0\in G_{n,k}$ and $W_0\in G_{m,k}$ and that 
\begin{equation*}
T_lP_{V_l}\text{ converges to }T_0P_{V_0}, 
\end{equation*}
where $T_0\in O(V_0,W_0)$. Indeed, let $S_l=T_lP_{V_l}$ and $R_l=T_l^{-1}P_{W_l}$. Choosing a convergent subsequences from $(S_l)_{l=1}^{\infty}$ and from $(R_l)_{l=1}^{\infty}$, we may assume that there exists $S_0,R_0$ such that
\begin{equation*}
R_0S_0=P_{V_0}\text{ and }S_0R_0=P_{W_0}.
\end{equation*}
Hence
\begin{equation*}
S_0P_{V_0}=P_{W_0}S_0\text{ and }R_0P_{W_0}=P_{V_0}R_0.
\end{equation*}
It follows that $S_0\colon V_0\to W_0$ and $R_0\colon W_0\to V_0$ are mutual reciprocals. Moreover, they are isometric. Indeed, for any $v,w\in\mathbb{R}^n$, we have
\begin{equation*}
\langle S_0P_{V_0}v, S_0P_{V_0}w\rangle =\lim_{l\to\infty} \langle S_lP_{V_l}v,S_lP_{V_l}w\rangle =\lim_{l\to\infty}\langle P_{V_l}v,P_{V_l}w\rangle=\langle P_{V_0}v,P_{W_0}w\rangle
\end{equation*}
Thus, putting $T_0$ to be $S_0$ restricted to $V_0$, we have proven the claim.

Choose any $v_0\in V_0$ of norm $\norm{v_0}< \alpha_k$. Then, by the definition of metric on $G_{n,k}$, the sequence $P_{V_l}v_0$ converges to $v_0$ Moreover, for sufficiently large $l$, 
\begin{equation*}
x_l+P_{V_l}v_0\in (x_l+V_l)\cap B\big(x_l,\big(1-1/l\big)\alpha_k(x_l)\big).
\end{equation*}
Thus
\begin{equation*}
u(x_l)-u(x_l+P_{V_l}v_0)=-T_lP_{V_l}v_0.
\end{equation*}
Passing to the limits we obtain
\begin{equation*}
u(x_0)-u(x_0+v_0)=-T_0v_0.
\end{equation*}
It follows that $\alpha_k(x_0)\geq \alpha_k$. The proof is complete if $\alpha_k$ is finite. Suppose now that $\alpha_k$ is infinite. Assume again that $\alpha_k(x_l)\in\mathbb{R}$ for each $l\in\mathbb{N}$ and that $\alpha_k(x_l)$ converges to infinity monotonically. Then there exist $V_l, W_l$ and $T_l$ as before, such that $V_l$ converges to $V_0$, $W_l$ converges to $W_0$ and $T_lP_{V_l}$ converges to $T_0P_{V_0}$. Taking any $v_0\in V_0$ of norm at most $l\in\mathbb{N}$ we may show that
\begin{equation*}
u(x_0)-u(x_0+v_0)=-T_0v_0.
\end{equation*}
Hence $\alpha_k(x_0)\geq l$ for each $l\in\mathbb{N}$ and thus $\alpha_k(x_0)=\infty$.
\end{proof}

Below we shall denote the unit ball by $B^n=\{x\in\mathbb{R}^n|\norm{x}\leq 1\}$.

\begin{definition}
For $k\in\{1,\dotsc,m\}$ define $\beta_k\colon \mathbb{R}^n\to \mathbb{R}$ by the formula
\begin{align*}
\beta_k(x)=\sup\Big\{\epsilon\geq 0\big|\exists_{\mathcal{C}\in C_{n,k}(\epsilon)}\exists_{W\in G_{m,k}}\exists_{T\in O(V_{\mathcal{C}},W)}&\forall_{y\in (x+\mathcal{C})\cap B(x,\epsilon)} \\
&u(x)-u(y)=T(x-y) \Big\}, 
\end{align*}
where $B(x,\epsilon)=\{y\in\mathbb{R}^n|\norm{x-y}< \epsilon\}$ and $V_{\mathcal{C}}=\mathrm{span}(\mathcal{C})$ and $C_{n,k}(\epsilon)$ is the set of all convex cones $\mathcal{C}$ in $\mathbb{R}^n$ of dimension $k$ such that 
\begin{equation*}
\lambda_k(\mathcal{C}\cap S^{n-1})\geq \epsilon^k.
\end{equation*}
Here $\lambda_k$ is the Lebesgue measure on the $k$-dimensional ball 
\begin{equation*}
V_{\mathcal{C}}\cap \{x\in\mathbb{R}^n|\norm{x}\leq 1\}.
\end{equation*}
Define $\beta_{m+1}\colon \mathbb{R}^n\to\mathbb{R}$ by $\beta_{m+1}(x)=0$ for all $x\in\mathbb{R}^n$.
\end{definition}

\begin{lemma}
For any $k\in\{1,\dotsc,m\}$ the function $\beta_k\colon \mathbb{R}^n\to\mathbb{R}$ is upper semicontinuous.
\end{lemma}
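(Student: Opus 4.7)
The plan is to adapt the argument of Lemma \ref{lem:alpha}, with convex cones from $C_{n,k}(\epsilon)$ replacing the $k$-dimensional subspaces used there. Fix a sequence $(x_l)_{l=1}^{\infty}$ converging to $x_0$ with $\beta_k(x_l) \to \beta_k \in \mathbb{R} \cup \{\infty\}$; the goal is to show $\beta_k \leq \beta_k(x_0)$. Treating first the case $\beta_k < \infty$ with $\beta_k(x_l) \in \mathbb{R}$, for each $l$ I select via the definition of the supremum a cone $\mathcal{C}_l \in C_{n,k}(\epsilon_l)$ with $\epsilon_l = (1-1/l)\beta_k(x_l)$, a subspace $W_l \in G_{m,k}$, and an isometry $T_l \in O(V_{\mathcal{C}_l}, W_l)$ such that $u(x_l) - u(y) = T_l(x_l - y)$ for every $y \in (x_l + \mathcal{C}_l) \cap B(x_l, \epsilon_l)$.

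The compactness step proceeds in several pieces. By compactness of $G_{n,k}$ and $G_{m,k}$ I pass to a subsequence with $V_{\mathcal{C}_l} \to V_0$ and $W_l \to W_0$; the same reasoning as in Lemma \ref{lem:alpha} (using the inverses to handle the non-injectivity of the projections) extracts a further subsequence so that $T_l P_{V_{\mathcal{C}_l}} \to T_0 P_{V_0}$ for some isometry $T_0 \in O(V_0, W_0)$. For the cones themselves, I identify $\mathcal{C}_l$ with $K_l = \mathcal{C}_l \cap \overline{B^n}$, a closed convex subset of the compact ball $\overline{B^n}$ containing $0$, and invoke compactness of the Hausdorff topology on closed subsets of $\overline{B^n}$ to obtain a limit $K_0$; then set $\mathcal{C}_0 = \bigcup_{t \geq 0} t K_0$. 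The limit $K_0$ is convex (Hausdorff limits of convex sets are convex) and star-shaped at $0$, so $\mathcal{C}_0$ is a convex cone contained in $V_0$.

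To obtain $\mathcal{C}_0 \in C_{n,k}(\beta_k)$ I use that the lower bound $\lambda_k(\mathcal{C}_l \cap S^{n-1}) \geq \epsilon_l^k$ survives passage to the Hausdorff limit: the relevant volume functional on convex sets is upper semicontinuous along such limits (any compact set in the interior of $\mathcal{C}_0$ is eventually contained in $\mathcal{C}_l$ by Hausdorff convergence), yielding $\lambda_k(\mathcal{C}_0 \cap S^{n-1}) \geq \beta_k^k$ and forcing $\mathcal{C}_0$ to be $k$-dimensional with $V_{\mathcal{C}_0} = V_0$. Finally, for an arbitrary $v_0 \in \mathcal{C}_0$ with $\norm{v_0} < \beta_k$, Hausdorff convergence $K_l \to K_0$ produces $v_l \in \mathcal{C}_l$ with $v_l \to v_0$ and $\norm{v_l} < \epsilon_l$ for large $l$; the identity $u(x_l + v_l) = u(x_l) + T_l v_l$ passes to the limit to give $u(x_0 + v_0) = u(x_0) + T_0 v_0$. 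Thus $(\mathcal{C}_0, W_0, T_0)$ witnesses $\beta_k(x_0) \geq \beta_k$. The case $\beta_k = \infty$ is handled by the same diagonal argument as at the end of the proof of Lemma \ref{lem:alpha}, selecting witnesses valid up to increasing radii $l \in \mathbb{N}$.

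The main obstacle is the third step: verifying that the size condition $\lambda_k(\mathcal{C}_l \cap S^{n-1}) \geq \epsilon_l^k$ transfers to the Hausdorff limit $\mathcal{C}_0$. Unlike the Grassmannian setting of Lemma \ref{lem:alpha}, where the linear structure is preserved automatically under subsequential limits, here the cone could a priori collapse to a lower-dimensional object, and the measure-theoretic semicontinuity property for convex bodies with spherical sections uniformly bounded below is the key input that prevents this degeneration. Once this ingredient is in place, the remainder is a routine translation of the $\alpha_k$ argument.
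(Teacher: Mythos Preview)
Your argument mirrors the paper's: select near-optimal cones $\mathcal{C}_l$, pass to a Hausdorff limit $K_0$ of $K_l=\mathcal{C}_l\cap B^n$, invoke upper semicontinuity of $\lambda_k$ under Hausdorff convergence (the paper cites \cite{Beer} for $\lambda_k(K_0)\geq\beta_k^k$), verify $V_{\mathcal{C}_0}=V_0$, extract the isometry limit $T_0$ as in Lemma~\ref{lem:alpha}, and pass to the limit in the identity $u(x_l)-u(y_l)=T_l(x_l-y_l)$. One small simplification: the paper observes at the outset that $\beta_k$ is always finite because $\lambda_k$ is a finite measure, so your separate treatment of the case $\beta_k=\infty$ is unnecessary.
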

\begin{proof}
Choose a sequence $(x_l)_{l=1}^{\infty}$ that converges to $x_0$ and such that there exists a limit
\begin{equation*}
\beta_k=\lim_{l\to\infty}\beta_k(x_l).
\end{equation*}
We need to show that $\beta_k\leq \beta_k(x_0)$. Observe that $\beta_k<\infty$, as $\lambda_k$ is a finite measure. It follows from the definition of $\beta_k(x_l)$ that there exist 
\begin{equation*}
\mathcal{C}_l\in C_{n,k}\big(\big(1-1/l\big)\beta_k(x_l)\big), W_l\in G_{m,k} \text{ and }T_l\in O(V_{\mathcal{C}_i},W_l)
\end{equation*}
such that for all $y\in (x_l+\mathcal{C}_l)\cap B(x_l,(1-1/l\beta_k(x_l))$
\begin{equation*}
u(x_l)-u(y)=T_l(x_l-y).
\end{equation*}
Consider the sets $K_l=\mathcal{C}_l\cap B^n$. These are compact, convex sets. Taking a subsequence, we may assume that there is a compact, convex set $K_0\subset B^n$ such that $K_l$ converges to $K_0$ in the Hausdorff metric. Moreover (see  \cite{Beer}),
\begin{equation*}
\lambda_k(K_0)\geq \beta_k^k.
\end{equation*}
Let 
\begin{equation*}
\mathcal{C}_0=\big\{x\in\mathbb{R}^n\big| x=\lambda y\text{ for some }\lambda\geq 0, y\in K_0\big\}.
\end{equation*}
Then $\mathcal{C}_0\in C_{n,k}(\beta_k)$. Passing to a subsequence, we may assume that $V_{\mathcal{C}_l}$ converges to some $V_0\in G_{n,k}$. We claim now that $V_{\mathcal{C}_l}$ converges to $V_{\mathcal{C}_0}$. Choose any $v_0\in V_{\mathcal{C}_0}$. Then there exist real numbers $\lambda_1,\dotsc,\lambda_k$ and $c_1,\dotsc,c_k\in K_0$ such that
\begin{equation*}
v_0=\sum_{j=1}^k\lambda_j c_j.
\end{equation*}
By the convergence in the Hausdorff metric we infer that there exist $(c_{j,l})_{l=1}^{\infty}$, $c_{j,l}\in K_l$, such that
\begin{equation*}
\lim_{l\to\infty}c_{j,l}=c_j.
\end{equation*}
Let 
\begin{equation*}
v_l=\sum_{j=1}^k\lambda_j c_{j,l}.
\end{equation*}
Then $\lim_{l\to\infty}v_l=v_0$ and $v_l\in V_{\mathcal{C}_l}$. Hence
\begin{equation*}
v_0=\lim_{l\to\infty} v_l=\lim_{l\to\infty} P_{V_{\mathcal{C}_l}}v_l=P_{V_0}v_0.
\end{equation*}
Hence $V_0=V_{\mathcal{C}_0}$ and we have proven the claim. Passing again to a subsequence, we assume that $(W_l)_{l=1}^{\infty}$ converges to $W_0\in G_{m,k}$. As in Lemma \ref{lem:alpha} we show that there exists $T_0\in O(V_{\mathcal{C}_0},W_0)$ such that
\begin{equation*}
T_kP_{V_{\mathcal{C}_l}}\text{ converges to }T_0P_{V_{\mathcal{C}_0}}.
\end{equation*}

Choose now any $y_0\in (x_0+\mathcal{C}_0)\cap B(x_0,\beta_k)$. Then 
\begin{equation*}
\frac{y_0-x_0}{\norm{y_0-x_0}}\in K_0.
\end{equation*}
Hence, there exists a sequence $(z_l)_{l=1}^{\infty}$ of elements in $K_l$ such that 
\begin{equation*}
\lim_{l\to\infty}z_l=\frac{y_0-x_0}{\norm{y_0-x_0}}.
\end{equation*}
Set
\begin{equation*}
y_l=x_l+\norm{y_0-x_0}z_l.
\end{equation*}
Thus 
\begin{equation*}
\lim_{l\to\infty}y_l=y_0.
\end{equation*}
For sufficiently large $l$,
\begin{equation*}
y_l\in(x_l+ \mathcal{C}_l)\cap B\big(x_l,\big(1-1/l\big)\beta_k(x_l)\big).
\end{equation*}
For $l$ as above, we have
\begin{equation*}
u(x_l)-u(y_l)=T_l(x_l-y_l).
\end{equation*}
Passing to the limit, it follows that
\begin{equation*}
u(x_0)-u(y_0)=T_0(x_0-y_0).
\end{equation*}
That is, $\beta_k(x_0)\geq\beta_k$. The proof is complete.
\end{proof}

\begin{lemma}\label{lem:beta}
A point $x\in\mathbb{R}^n$ belongs to a leaf $\mathcal{S}$ of $u$ of dimension at least $k$ if and only if $\beta_k(x)>0$. A point $x\in\mathbb{R}^n$ belongs to a leaf $\mathcal{S}$ of $u$ of dimension exactly $k$ if and only if $\beta_k(x)>0$ and $\beta_{k+1}(x)=0$. 
\end{lemma}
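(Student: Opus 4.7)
The plan is to prove the first characterization (dimension $\geq k$) and then deduce the second by combining it with the $k+1$ case. Both implications in the first part are reasonably direct once one observes that $\beta_k$ encodes precisely the existence of a flat isometric ``cone-neighborhood'' of $x$ which, by Lemma~\ref{lem:uniq}, can be extended to a leaf.

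For $\beta_k(x)>0\Rightarrow x$ lies in a leaf of dimension $\geq k$: choose a witness $(\mathcal{C},W,T,\epsilon)$. The set $A=(x+\mathcal{C})\cap B(x,\epsilon)$ is convex, and the identity $u(y)=u(x)+T(y-x)$ shows $u|_A$ is an affine isometry. Since $\mathcal{C}$ is a $k$-dimensional convex cone it has non-empty interior in $V_{\mathcal{C}}$, so $A$ also does and hence its tangent space is $V_{\mathcal{C}}$. Applying Lemma~\ref{lem:uniq} to extend $u|_A$ isometrically to $\mathrm{Conv}(A)$ and then invoking Zorn's lemma produces a maximal isometric set, i.e.\ a leaf $\mathcal{S}\supset A$ (closed and convex by the corollary following Lemma~\ref{lem:uniq}), with $\dim\mathcal{S}\geq\dim A=k$.

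For the converse, suppose $x\in\mathcal{S}$ with $\dim\mathcal{S}\geq k$, and let $T_{\mathcal{S}}:V_{\mathcal{S}}\to\mathbb{R}^m$ be the linear isometry from Lemma~\ref{lem:uniq}. Pick a point $y_0\in\mathrm{int}\mathcal{S}$ together with a radius $\delta>0$ so that $y_0+w\in\mathrm{int}\mathcal{S}$ for all $w\in V_{\mathcal{S}}$ with $\|w\|<\delta$. The open convex cone $\mathcal{C}_0\subset V_{\mathcal{S}}$ generated by the translates $v_0+w$ (where $v_0=y_0-x$) has linear span $V_{\mathcal{S}}$, so its intersection with any $k$-dimensional subspace $V\subset V_{\mathcal{S}}$ containing $v_0$ is an open $k$-dimensional convex cone $\mathcal{C}$ in $V$. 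For $v\in\mathcal{C}$ of small norm the point $x+v=(1-t)x+t(y_0+w)$ is a convex combination of $x\in\mathcal{S}$ and a relative interior point of $\mathcal{S}$, hence lies in $\mathcal{S}$; consequently $u(x)-u(y)=T(x-y)$ on $(x+\mathcal{C})\cap B(x,\epsilon)$ for small $\epsilon$, with $T:=T_{\mathcal{S}}|_V$. Since $\mathcal{C}$ is open in $V$ it has positive $k$-dimensional solid angle, and for sufficiently small $\epsilon$ the fatness condition $\mathcal{C}\in C_{n,k}(\epsilon)$ is met, whence $\beta_k(x)\geq\epsilon>0$.

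The second assertion follows by combining the first with its analogue for $k+1$: $\beta_k(x)>0$ and $\beta_{k+1}(x)=0$ is, by the first part, equivalent to the existence of a leaf of dimension at least $k$ through $x$ together with the non-existence of any leaf of dimension at least $k+1$ through $x$; this is exactly the condition that the dimension of the leaf through $x$ equals $k$. The main technical care-point is tracking the scaling $\lambda_k(\mathcal{C}\cap S^{n-1})\geq\epsilon^k$ in the converse of the first part---once the cone $\mathcal{C}$ is chosen open in its $k$-dimensional span the inequality holds for all sufficiently small $\epsilon$, but one must pass to a possibly smaller $\epsilon$ simultaneously controlling the membership $x+v\in\mathcal{S}$ and the solid-angle bound.
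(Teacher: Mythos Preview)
Your argument is correct and follows essentially the same route as the paper: the forward direction uses Zorn's lemma to extend the isometry set $(x+\mathcal{C})\cap B(x,\epsilon)$ to a leaf, while the converse builds a cone from an interior point $y_0\in\mathrm{int}\mathcal{S}$, and the second assertion is then an immediate consequence of the first. One minor improvement over the paper's own proof is that you explicitly intersect with a $k$-dimensional subspace $V\subset V_{\mathcal{S}}$, whereas the paper constructs an $l$-dimensional cone (with $l=\dim\mathcal{S}\geq k$) and concludes $\beta_l(x)>0$, leaving the passage to $\beta_k(x)>0$ implicit.
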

\begin{proof}
Suppose that $x_0\in\mathbb{R}^n$ belongs to a leaf $\mathcal{S}$ of $u$ of dimension $l\in\{k,\dotsc,m\}$. Let $V$ denote the tangent space of $\mathcal{S}$. Choose a point $x_1\in\mathrm{int}\mathcal{S}$ and $\epsilon_0>0$ so that $B(x_1,\epsilon_0)\cap V\subset\mathcal{S}$. For $\epsilon\in (0,\epsilon_0)$ let 
\begin{equation*}
\mathcal{C}=\big\{x\in\mathbb{R}^n\big|x=\lambda (x_2-x_0)\text{ for some }\lambda\geq 0, x_2\in B(x_1,\epsilon)\cap V\big\}.
\end{equation*}
Then $\mathcal{C}$ is a convex cone containing $0$, of dimension $l$ and such that 
\begin{equation*}
\lambda_l(\mathcal{C}\cap B^n)\geq\epsilon^l,
\end{equation*}
provided that $\epsilon$ is sufficiently small.
Moreover, by convexity of $\mathcal{S}$, $u$ is isometric on $(x_0+\mathcal{C})\cap B(x_0,\epsilon)$, if $\epsilon>0$ is sufficiently small. Hence $\beta_l(x_0)\geq \epsilon>0$.
Conversely, suppose that $\beta_k(x_0)>0$. Then there exist
\begin{equation*}
\epsilon>0\text{, a cone }\mathcal{C}\in C_{n,k}(\epsilon), \text{ a subspace }W\in G_{m,k}\text{, an isometry }T\in O(V_{\mathcal{C}},W)
\end{equation*}
such that
\begin{equation*}
u(x_0)-u(y)=T(x-y)\text{ for all }y\in (x_0+\mathcal{C})\cap B(x_0,\epsilon).
\end{equation*}
With use of the Kuratowski-Zorn lemma choose a leaf $\mathcal{S}$ of~$u$ containing 
\begin{equation*}
(x_0+\mathcal{C})\cap B(x_0,\epsilon).
\end{equation*}
Then the dimension of $\mathcal{S}$ is at least $k$. The second assertion is a trivial consequence of the first assertion.
\end{proof}

\begin{lemma}\label{lem:interior}
A point $x\in\mathbb{R}^n$ belongs to relative interior of a leaf $\mathcal{S}$ of $u$ of dimension $k$ if and only if $\alpha_k(x)>0$ and $\beta_{k+1}(x)=0$.
\end{lemma}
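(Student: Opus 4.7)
The plan is to reduce both directions to the structural lemmas already in place: Lemma \ref{lem:beta} (which characterises membership in a $k$-dimensional leaf via $\beta_k$), Lemma \ref{lem:boundary} (interior of a leaf meets no other leaf), and the fact from Lemma \ref{lem:uniq} and its corollary that $u$ is an affine isometry on each leaf. Nothing genuinely new should be required.

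\textbf{Necessity.} Assume $x$ lies in the relative interior of a leaf $\mathcal{S}$ of dimension $k$, and let $V$ be its tangent space. First I would obtain $\alpha_k(x)>0$: relative interiority gives some $\epsilon>0$ with $(x+V)\cap B(x,\epsilon)\subset \mathcal{S}$, and the corollary of Lemma \ref{lem:uniq} provides an isometry $T\in O(V,W)$ (with $W=T(V)\in G_{m,k}$) such that $u(x)-u(y)=T(x-y)$ for all $y\in \mathcal{S}$, in particular for all $y$ in this ball. Hence $\alpha_k(x)\ge\epsilon$. For $\beta_{k+1}(x)=0$, I argue by contradiction: if $\beta_{k+1}(x)>0$, Lemma \ref{lem:beta} would place $x$ in a leaf $\mathcal{S}'$ of dimension at least $k+1$. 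Since $\dim\mathcal{S}<\dim\mathcal{S}'$ the two leaves are distinct, so Lemma \ref{lem:boundary} forces $x\in\partial\mathcal{S}$, contradicting $x\in\mathrm{int}\mathcal{S}$.

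\textbf{Sufficiency.} Assume $\alpha_k(x)>0$ and $\beta_{k+1}(x)=0$. From the definition of $\alpha_k$, pick $\epsilon>0$, $V\in G_{n,k}$, $W\in G_{m,k}$, and $T\in O(V,W)$ with $u(x)-u(y)=T(x-y)$ for all $y\in(x+V)\cap B(x,\epsilon)$; in particular $u$ is an isometry on this set. Apply the Kuratowski--Zorn argument (exactly as in the proof of Lemma \ref{lem:beta}) to produce a leaf $\mathcal{S}$ containing $(x+V)\cap B(x,\epsilon)$; since this set spans an affine $k$-plane through $x$, the tangent space $V'$ of $\mathcal{S}$ satisfies $V\subset V'$, so $\dim\mathcal{S}\ge k$. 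The hypothesis $\beta_{k+1}(x)=0$ and Lemma \ref{lem:beta} preclude $\dim\mathcal{S}\ge k+1$, hence $\dim\mathcal{S}=k$ and $V=V'$. Then $(x+V')\cap B(x,\epsilon)\subset\mathcal{S}$ is a relatively open neighbourhood of $x$ in $\mathrm{Aff}(\mathcal{S})$, so $x\in\mathrm{int}\mathcal{S}$.

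I do not expect any serious obstacle: the only small subtlety is verifying $V=V'$ in the sufficiency direction, but this is immediate because a $k$-dimensional open ball in $x+V$ cannot fit into any affine subspace of dimension less than $k$, so containment in $x+V'$ with $\dim V'=k$ forces equality. Everything else is a direct appeal to the lemmas already established.
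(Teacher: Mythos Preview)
Your proof is correct and follows essentially the same route as the paper's own argument: both directions hinge on Lemma~\ref{lem:beta} and the Kuratowski--Zorn step, with the affine-isometry structure of $u$ on a leaf supplying the witness for $\alpha_k(x)>0$. Your version is in fact slightly more explicit than the paper's in two places---you spell out the appeal to Lemma~\ref{lem:boundary} when showing $\beta_{k+1}(x)=0$ in the necessity direction, and you verify $V=V'$ in the sufficiency direction---but these are exactly the details the paper's terse proof leaves implicit.
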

\begin{proof}
Suppose that $x_0$ belongs to the relative interior of a leaf $\mathcal{S}$ of~$u$ of dimension $k$. By the previous lemma $\beta_k(x_0)>0$ and $\beta_{k+1}(x_0)=0$. 
Let $V$ denote the tangent space of $\mathcal{S}$. Then, as $x_0$ is in the relative interior, there exist $\epsilon>0$, $W\in G_{m,k}$ and $T\in O(V,W)$ such that 
\begin{equation*}
u(x_0)-u(y)=T(x_0-y)\text{ for all }y\in B(x_0,\epsilon).
\end{equation*} 
That is $\alpha_k(x_0)\geq \epsilon>0$. 

Conversely, suppose that $\alpha_k(x_0)>0$ and $\beta_{k+1}(x_0)=0$. 
Then there exist $V\in G_{n,k}, W\in G_{m,k}$ and $T\in O(V,W)$ such that
\begin{equation*}
u(x_0)-u(y)=T(x_0-y)\text{ for all }y\in B(x_0,\epsilon)\cap V.
\end{equation*} 
It follows from the Kuratowski-Zorn lemma that $x_0$ belongs to a leaf $\mathcal{S}$ of~$u$. As $\beta_{k+1}(x_0)=0$, this leaf is of dimension $k$ and $x_0$ belongs to the relative interior of $\mathcal{S}$.
\end{proof}

\begin{corollary}\label{col:borel}
Let $k\in\{0,\dotsc,m\}$. Then the union of all leaves of $u$ of dimension $k$ is Borel measurable. Moreover, the union of all relative interiors of leaves of $u$ of dimension $k$ is a Borel set and so is the union of all relative boundaries of leaves of $u$ of dimension $k$.
\end{corollary}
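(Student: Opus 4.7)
The plan is to read off Corollary \ref{col:borel} from the functional characterisations of leaves provided by Lemmas \ref{lem:beta} and \ref{lem:interior}, combined with the upper semicontinuity of $\alpha_k$ and $\beta_k$ already established. First I would observe that if $f\colon\mathbb{R}^n\to\mathbb{R}\cup\{\infty\}$ is upper semicontinuous, then $\{f\geq c\}$ is closed for every $c\in\mathbb{R}$; in particular, since $\alpha_k$ and $\beta_k$ are non-negative and upper semicontinuous, $\{\alpha_k>0\}=\bigcup_{n=1}^\infty\{\alpha_k\geq 1/n\}$ and $\{\beta_k>0\}=\bigcup_{n=1}^\infty\{\beta_k\geq 1/n\}$ are $F_\sigma$, while $\{\alpha_k=0\}$ and $\{\beta_k=0\}$ are their complements and hence $G_\delta$. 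All four families of sets are therefore Borel.

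Next I would invoke Lemma \ref{lem:beta} to identify, for $k\in\{1,\ldots,m\}$, the union of all dimension-$k$ leaves with $\{\beta_k>0\}\cap\{\beta_{k+1}=0\}$ (using the convention $\beta_{m+1}\equiv 0$), which is Borel by the previous step. The case $k=0$ requires a small separate argument: by a Kuratowski-Zorn application every point of $\mathbb{R}^n$ lies in some leaf, so the union of zero-dimensional leaves is precisely $\{x\in\mathbb{R}^n\mid \beta_1(x)=0\}$, again Borel. The treatment of relative interiors is entirely parallel: Lemma \ref{lem:interior} gives, for $k\geq 1$, the representation $\{\alpha_k>0\}\cap\{\beta_{k+1}=0\}$, while for $k=0$ the relative interior of a zero-dimensional leaf coincides with the leaf itself.

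For the union of relative boundaries of dimension-$k$ leaves, the plan is to write it as the set-theoretic difference of the union of all dimension-$k$ leaves and the union of their relative interiors; both sets have just been shown to be Borel, so their difference is Borel. The main obstacle I anticipate is not topological but set-theoretic: I need to verify that this difference really equals the union of relative boundaries, which amounts to ruling out the configuration where a point $x\in\partial\mathcal{S}_1$ for one dimension-$k$ leaf $\mathcal{S}_1$ simultaneously lies in $\mathrm{int}\mathcal{S}_2$ for another dimension-$k$ leaf $\mathcal{S}_2$. This is exactly where Lemma \ref{lem:boundary} intervenes: it forces $\mathcal{S}_1\cap\mathcal{S}_2\subset\partial\mathcal{S}_2$, contradicting $x\in\mathrm{int}\mathcal{S}_2$. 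Consequently the proposed difference is exactly the union of the relative boundaries of all dimension-$k$ leaves, and the corollary is proved.
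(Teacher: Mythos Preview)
Your proposal is correct and follows precisely the approach the paper intends: the corollary is stated immediately after Lemma~\ref{lem:interior} with no proof, so the paper expects the reader to combine the upper semicontinuity of $\alpha_k,\beta_k$ with the characterisations in Lemmas~\ref{lem:beta} and~\ref{lem:interior}, exactly as you do. Your explicit handling of the $k=0$ case and the invocation of Lemma~\ref{lem:boundary} to justify that the set-theoretic difference really equals the union of relative boundaries are details the paper leaves implicit but which your argument correctly supplies.
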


Below we adapt a convention that $\inf\emptyset=\infty$.

\begin{definition}\label{def:gamma}
Let $k\in \{0,\dotsc,m\}$. For $\rho>0$, define $\gamma_{k,\mu}\colon \mathbb{R}^n\times\mathbb{R}^m\to \mathbb{R}\cup\{\infty\}$ by the formula
\begin{equation*}
\gamma_{k,\rho}(x,y)=\inf\Big\{t>0\big| y\in t\big(u(\mathcal{S}_x)-u(x)\big)\text{ and }\norm{y}\leq t\rho\Big\}
\end{equation*}
for $x\in\mathbb{R}^n$ such that $\alpha_k(x)>0$ and $\beta_{k+1}(x)=0$
and
\begin{equation*}
\gamma_{k,\rho}(x,y)=\infty\text{ otherwise.}
\end{equation*}
Here $\mathcal{S}_x$ is the unique leaf of $u$ such that $x\in\mathcal{S}_x$.
\end{definition}

\begin{lemma}
For any $k\in \{0,\dotsc,m\}$ and $\rho>0$, the function $\gamma_{k,\rho}$ is Borel measurable.
\end{lemma}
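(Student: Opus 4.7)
The plan is to reduce the claim to Borel measurability of the ``leaf graph''
\[
\Gamma=\big\{(x,y)\in\mathcal{A}_k\times\mathbb{R}^m : y\in u(\mathcal{S}_x)-u(x)\big\},
\]
where $\mathcal{A}_k:=\{\alpha_k>0\}\cap\{\beta_{k+1}=0\}$. By Lemma~\ref{lem:interior} and Corollary~\ref{col:borel} $\mathcal{A}_k$ is Borel, and on its complement $\gamma_{k,\rho}\equiv\infty$. Note that $U(x):=u(\mathcal{S}_x)-u(x)$ is convex and contains $0$ (since $u|_{\mathcal{S}_x}$ is affine and $x\in\mathcal{S}_x$), so both conditions ``$y\in tU(x)$'' and ``$\norm{y}\le t\rho$'' are monotone non-decreasing in $t>0$. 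Consequently, for every $s>0$,
\[
\{\gamma_{k,\rho}<s\}=\bigcup_{t\in\mathbb{Q}_+\cap(0,s)}\Big(\big\{(x,y):(x,y/t)\in\Gamma\big\}\cap\{\norm{y}\le t\rho\}\Big),
\]
and once $\Gamma$ is known to be Borel the right-hand side is a countable union of Borel sets.

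The key technical step is to display $\Gamma$ as a concrete Borel set. First I would establish the characterisation
\[
\text{for }x\in\mathcal{A}_k: \qquad z\in\mathcal{S}_x\ \Longleftrightarrow\ \norm{u(z)-u(x)}=\norm{z-x}.
\]
The $\Rightarrow$ direction is immediate since $u|_{\mathcal{S}_x}$ is an isometry. For $\Leftarrow$, Lemma~\ref{lem:uniq} together with the Kuratowski--Zorn lemma places $\{x,z\}$ inside some leaf, and since $x\in\mathrm{int}\mathcal{S}_x$ Lemma~\ref{lem:boundary} forces that leaf to equal $\mathcal{S}_x$. Combined with the $1$-Lipschitz property, which upgrades $\norm{z-x}\le\norm{y}$ into equality whenever $u(z)=u(x)+y$, this yields
\[
y\in U(x)\ \Longleftrightarrow\ \exists z\in\mathbb{R}^n:\ u(z)=u(x)+y\ \text{and}\ \norm{z-x}\le\norm{y}.
\]

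Finally I would fix a countable dense set $D\subset\mathbb{R}^n$ and, using that $\overline{B(x,\norm{y})}$ is compact and $u$ is continuous, argue by approximation that the condition on the right is equivalent to
\[
\forall\, n\in\mathbb{N}\ \exists z\in D:\ \norm{z-x}<\norm{y}+\tfrac{1}{n}\ \text{and}\ \norm{u(z)-u(x)-y}<\tfrac{1}{n}.
\]
This exhibits $\Gamma$ as the intersection of the Borel set $\mathcal{A}_k\times\mathbb{R}^m$ with a $G_\delta$ set (a countable intersection of countable unions of open sets), hence as a Borel set, completing the proof. The only delicate point is the leaf-membership characterisation via the isometry equality; the remaining ingredients (compactness, countable dense approximation, monotonicity in $t$) are routine.
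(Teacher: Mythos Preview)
Your argument is correct. The route differs from the paper's, though the two share the same core ingredients.

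The paper proceeds more directly: it shows that $\gamma_{k,\rho}$ is \emph{lower semicontinuous} on $A_k=\{(x,y):\alpha_k(x)>0,\ \beta_{k+1}(x)=0\}$. Given a sequence $(x_l,y_l)\to(x_0,y_0)$ in $A_k$ with $\gamma_{k,\rho}(x_l,y_l)\to\gamma_k$, one picks $z_l\in\mathcal{S}_{x_l}$ and $t_l<\gamma_{k,\rho}(x_l,y_l)+1/l$ with $y_l=t_l(u(z_l)-u(x_l))$, observes that the $z_l$ are bounded, extracts a subsequential limit $z_0$, and checks $z_0\in\mathcal{S}_{x_0}$ from $\norm{u(z_0)-u(x_0)}=\norm{z_0-x_0}$ together with $x_0\in\mathrm{int}\,\mathcal{S}_{x_0}$. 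That last step is exactly your leaf-membership characterisation, used implicitly.

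What you do instead is make the Borel structure explicit: you isolate the graph $\Gamma$, use monotonicity of the Minkowski-type condition to reduce sublevel sets to countable unions indexed by rational $t$, and then exhibit $\Gamma$ as a $G_\delta$ (intersected with $\mathcal{A}_k\times\mathbb{R}^m$) via a countable-dense approximation. Your compactness step---extracting $z_0$ from the approximating $z_n\in D$---plays the same role as the paper's extraction of $z_0$ from the $z_l$. The paper's version is shorter and yields the stronger conclusion of lower semicontinuity on $A_k$; your version has the merit of displaying the Borel description concretely without any auxiliary reduction to semicontinuity.
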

\begin{proof}
As $\alpha_k$ and $\beta_{k+1}$ are Borel measurable, it is enough to show that $\gamma_{k,\rho}$ is Borel measurable on 
\begin{equation*}
A_k=\big\{(x,y)\in\mathbb{R}^n\times\mathbb{R}^m\big|\alpha_k(x)>0\text{ and }\beta_{k+1}(x)=0\big\}.
\end{equation*}
We claim that $\gamma_{k,\rho}$ is lower-semicontinuous on $A_k$.

Indeed let $(x_l,y_l)_{l=1}^{\infty}$ be a sequence in $A_k$ such that there exists $(x_0,y_0)\in A_k$ and
\begin{equation*}
x_0=\lim_{l\to\infty} x_l \text{ and }y_0=\lim_{l\to\infty}y_l \text{ and such that there exists }\lim_{l\to\infty}\gamma_{k,\rho}(x_l,y_l)=\gamma_k.
\end{equation*}
We shall show that
\begin{equation*}
\gamma_{k,\rho}(x_0,v_0)\leq\gamma_k.
\end{equation*}
We know that there exists sequence $(z_l)_{l=1}^{\infty}$ in $\mathbb{R}^n$ and a sequence $(t_l)_{l=1}^{\infty}$ in $\mathbb{R}$ such that 
\begin{equation}\label{eqn:equality3}
y_l=t_l\big(u(z_l)-u(x_l)\big)\text{, where }z_l\in \mathcal{S}_{x_l}\text{ and }0<t_l<\gamma_{k,\rho}(x_k,y_l)+1/l.
\end{equation}
Moreover, as
\begin{equation*}
\norm{z_l-x_l}=\norm{u(z_l)-u(x_l)}=\norm{y_l-u(x_l)}\leq t_l \rho+\norm{x_l}
\end{equation*}
passing possibly to a subsequence, we may assume that $(z_l)_{l=1}^{\infty}$ converges to some $z_0\in\mathcal{S}_{x_0}$. 
Again passing to a subsequence, we may assume that $(t_l)_{l=1}^{\infty}$ converges to some $t_0\geq 0$. Taking limits in (\ref{eqn:equality3}) we see that
\begin{equation*}
y_0=t_0\big(u(z_0)-u(x_0)\big)\text{ with }z_0\in \mathcal{S}_{x_0}\text{ and }0\leq t_0\leq \gamma_k.
\end{equation*}
Hence
\begin{equation*}
y_0\in t_0 \big(u(\mathcal{S}_{x_0})-u(x_0)\big)\text{ and }\norm{y_0}\leq t_0\rho.
\end{equation*}
It follows that 
\begin{equation*}
\gamma_{k,\rho}(x_0,y_0)\leq t_0\leq\gamma_k.
\end{equation*}
The proof is complete.
\end{proof}

\begin{definition}
For a convex set $K\subset\mathbb{R}^m$, such that $0\in \mathrm{int}K$, define \emph{Minkowski functional} of $K$
\begin{equation*}
\norm{\cdot}_K\colon\mathbb{R}^m\to\mathbb{R}\cup\{\infty\}
\end{equation*}
by the formula
\begin{equation*}
\norm{y}_K=\inf\big\{t>0|y\in tK\big\}.
\end{equation*}
\end{definition}

\begin{proposition}\label{pro:minkowski}
Let $K\subset \mathbb{R}^m$ be a convex set such that $0\in\mathrm{int}K$. A point $y\in\mathbb{R}^m$ belongs to the relative interior of $K$ if and only if $\norm{y}_K<1$.

Moreover, if $K$ is compact, then a point $y\in \mathbb{R}^m$ belongs to the boundary of $K$ if and only if $\norm{y}_K=1$.
\end{proposition}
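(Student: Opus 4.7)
The plan is to first establish the ``$\Longleftarrow$'' direction of the main equivalence, and then its converse, and finally deduce the compact case by combining the two with the closedness of $K$.

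First I would note that since $0\in\mathrm{int}K$, the affine hull of $K$ is all of $\mathbb{R}^m$, so the relative interior of $K$ coincides with its interior; I will therefore just work with $\mathrm{int}K$. Fix $\epsilon>0$ with $B(0,\epsilon)\subset K$. Suppose $\norm{y}_K<1$. Then by definition of the infimum there is $t\in(\norm{y}_K,1)$ with $y\in tK$, say $y=tz$ with $z\in K$. For any $w\in B(0,(1-t)\epsilon)$ we have $w/(1-t)\in B(0,\epsilon)\subset K$, so by convexity
\begin{equation*}
y+w=tz+(1-t)\tfrac{w}{1-t}\in K.
\end{equation*}
Hence $B(y,(1-t)\epsilon)\subset K$ and $y\in\mathrm{int}K$.

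Conversely, suppose $y\in\mathrm{int}K$. If $y=0$ then trivially $\norm{y}_K=0<1$. Otherwise choose $\delta>0$ so that $B(y,\delta)\subset K$, and pick $s>0$ small enough that $s\norm{y}<\delta$. Then $(1+s)y=y+sy\in B(y,\delta)\subset K$, so $y=\tfrac{1}{1+s}(1+s)y\in\tfrac{1}{1+s}K$, giving $\norm{y}_K\leq \tfrac{1}{1+s}<1$. This completes the first equivalence.

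For the second assertion, assume $K$ is compact and in particular closed. If $\norm{y}_K=1$, pick $t_n\downarrow 1$ with $y\in t_nK$, i.e.\ $y/t_n\in K$; since $K$ is closed, $y=\lim y/t_n\in K$. On the other hand $\norm{y}_K=1$ rules out $y\in\mathrm{int}K$ by the first part, so $y\in K\setminus\mathrm{int}K=\partial K$. Conversely, if $y\in\partial K$ then $y\in K$ (closedness) so $\norm{y}_K\leq 1$, while $y\notin\mathrm{int}K$ forces $\norm{y}_K\geq 1$ by the first equivalence; hence $\norm{y}_K=1$.

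No step is genuinely delicate; the only thing to be careful about is that the infimum defining $\norm{\cdot}_K$ need not be attained, which is exactly where compactness (closedness) is used in the second statement to convert $\norm{y}_K=1$ into the membership $y\in K$.
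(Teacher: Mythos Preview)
Your proof is correct and follows essentially the same approach as the paper: both directions of the first equivalence use the same convex-combination argument with a small ball around $0$ and the ``push-out'' trick $(1+s)y\in K$, and the boundary case is deduced from the first part together with closedness. If anything, your treatment of the compact case is slightly cleaner, since you use only closedness to pass to the limit $y/t_n\to y\in K$, whereas the paper invokes compactness to extract a convergent subsequence that is not actually needed.
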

\begin{proof}
If $y\in \mathrm{int}K$, then, as $0+y=y\in\mathrm{int}K$, it follows by continuity of addition, that $y+w\subset\mathrm{int}K$ provided that $\norm{w}\leq\epsilon$, for $\epsilon>0$ sufficiently small. Observe that $\norm{y/s}\leq \epsilon$ if $s\geq \norm{y}/\epsilon$ and thus for large $s>0$
\begin{equation*}
(1+1/s)y\in K.
\end{equation*}
Hence $\norm{y}_K\leq \frac{s}{s+1}<1$.

Conversely, suppose that $\norm{y}_K<1$. Then $y\in tK$ for some $t<1$. As $0\in\mathrm{int}K$, there exists $\epsilon>0$ such that if $\norm{w}\leq \epsilon$, then $w\in K$. Hence, if $\norm{w}\leq\epsilon (1-t)$, then
\begin{equation*}
y+w\in tK+(1-t)K=K,
\end{equation*}
by convexity of $K$.

Assume that $K$ is compact. Suppose that $y\in\partial K$. Then clearly $\norm{y}_K\leq 1$ and, by the above $\norm{y}_K\geq 1$.

Conversely, let $\norm{y}_K=1$. Then there exists a sequence of positive numbers $(t_l)_{l=1}^{\infty}$ converging to $0$ and a sequence $(x_l)_{l=1}^{\infty}$ in $K$ such that
\begin{equation*}
y=(1+t_l)x_l.
\end{equation*}
Taking a convergent subsequence from $(x_l)_{l=1}^{\infty}$ we see that $y=x_0$ for some $x\in K$.
\end{proof}

\begin{lemma}\label{lem:minkowski}
If $x\in\mathbb{R}^n$ belongs to relative interior of a leaf $\mathcal{S}$ of $u$ of dimension at $k$, then $\gamma_{k,\rho}(x,\cdot)$ is Minkowski functional a closed, convex set
\begin{equation*}
K_{\rho}=\big(u(\mathcal{S})-u(x)\big)\cap\big\{y\in\mathbb{R}^m\big|\norm{y}\leq\rho\big\}\subset\mathbb{R}^m.
\end{equation*}
If $x\in\mathbb{R}^n$ does not belong to relative interior of any leaf of dimension $k$, then 
\begin{equation*}
\gamma_{k,\rho}(x,\cdot)=\infty.
\end{equation*}
\end{lemma}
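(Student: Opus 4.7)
The plan is to verify both assertions by direct unpacking of Definition \ref{def:gamma}. The ``otherwise'' case is immediate: by Lemma \ref{lem:interior}, the joint condition $\alpha_k(x)>0$ and $\beta_{k+1}(x)=0$ characterises precisely those points $x$ lying in the relative interior of some leaf of dimension exactly $k$. Hence if $x$ belongs to no such relative interior, the fallback clause of Definition \ref{def:gamma} forces $\gamma_{k,\rho}(x,\cdot)=\infty$.

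For the main assertion, I would first establish that $K_{\rho}$ is closed and convex. Suppose $x$ lies in the relative interior of a $k$-dimensional leaf $\mathcal{S}=\mathcal{S}_x$. By the corollary following Lemma \ref{lem:uniq}, $\mathcal{S}$ is closed and convex and $u|_{\mathcal{S}}$ is an affine isometry onto its image. Consequently $u(\mathcal{S})-u(x)$ is a closed convex subset of $\mathbb{R}^m$ containing the origin, and so
\[
K_{\rho}=\bigl(u(\mathcal{S})-u(x)\bigr)\cap\bigl\{y\in\mathbb{R}^m:\norm{y}\leq\rho\bigr\}
\]
is closed and convex as an intersection of two such sets.

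It then remains to identify $\gamma_{k,\rho}(x,\cdot)$ with the Minkowski functional of $K_{\rho}$. For any $y\in\mathbb{R}^m$ and $t>0$, scaling by $t$ in $\mathbb{R}^m$ gives
\[
y\in tK_{\rho}\iff y\in t\bigl(u(\mathcal{S})-u(x)\bigr)\text{ and }\norm{y}\leq t\rho,
\]
which is precisely the condition appearing under the infimum in Definition \ref{def:gamma}. Taking the infimum over $t>0$ on both sides yields $\gamma_{k,\rho}(x,y)=\inf\{t>0:y\in tK_{\rho}\}$, i.e.\ the Minkowski functional of $K_{\rho}$.

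I do not anticipate a serious obstacle here; the verification is essentially bookkeeping. The only mild wrinkle is that when $k<m$ the origin need not lie in the topological interior of $K_{\rho}$ (since $K_{\rho}$ sits inside the lower-dimensional affine span of $u(\mathcal{S})-u(x)$), so strictly speaking the formula defining the Minkowski functional should be read as an extended-real valued quantity attached to a closed convex set containing $0$. This is consistent with Definition \ref{def:gamma}, where $\gamma_{k,\rho}$ also takes values in $\mathbb{R}\cup\{\infty\}$, and no further argument is needed.
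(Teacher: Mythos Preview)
Your proposal is correct and follows essentially the same route as the paper's own proof: both invoke Lemma \ref{lem:interior} to handle the ``otherwise'' case and then unpack Definition \ref{def:gamma} directly, with your version being more explicit about why $K_\rho$ is closed and convex and why the infimum conditions match. Your remark about the $k<m$ wrinkle is a fair observation (the paper's formal definition of Minkowski functional assumes $0\in\mathrm{int}K$), though in the paper the lemma is only applied with $k=m$.
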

\begin{proof}
Suppose that $x\in\mathbb{R}^n$ does not belong to relative interior of a leaf of $u$ of dimension at least $k$. Then Lemma \ref{lem:interior} and Definition \ref{def:gamma} tells us that $\gamma_{k,\rho}(x)=\infty$. 

Let now $x\in\mathrm{int}\mathcal{S}$, where $\mathcal{S}$ is a $k$-dimensional leaf. By Lemma \ref{lem:boundary}, $x$ belongs to a unique leaf. The assertion of the lemma follows readily from definitions.
\end{proof}

\begin{definition}
Let $k\in \{0,\dotsc,m\}$. We shall denote by $T_k$ union of all $k$-dimensional leaves of $u$, by $\mathrm{int}T_k$ union of all relative interiors of all $k$-dimensional leaves of $u$ and by $\partial{T}_k$ union of all relative boundaries of all $k$-dimensional leaves of $u$.
\end{definition}

\begin{lemma}\label{lem:measurable}
For each $p\in Q$ and each $i,j\in\mathbb{N}$ the cluster $\mathrm{int}T_{pij}$ and its image $G(\mathrm{int}T_{pij})$ are Borel sets. Moreover $\partial T_m$ is a Borel set of Lebesgue measure zero.
\end{lemma}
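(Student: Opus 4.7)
The plan is to handle the two Borel-measurability claims by characterizing the sets via already-established Borel functions on $\mathrm{int}T_m$, and to dispatch the Lebesgue-null statement by a Fubini argument inside a Lipschitz parameterization of the cluster.

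For the Borel measurability of $\mathrm{int}T_{pij}$, I would use that $\mathrm{int}T_m$ is Borel by Corollary \ref{col:borel} and that $u$ is differentiable there with Borel derivative by Corollary \ref{col:diff}. Introduce the Borel map
\[
z(x) := x + Du(x)^{*}\bigl(p-u(x)\bigr) \quad \text{for } x\in\mathrm{int}T_m.
\]
On any $m$-dimensional leaf $\mathcal{S}$, the restriction $u|_{\mathcal{S}}$ is affine with derivative $TP$ for an orthogonal isomorphism $T\colon V\to\mathbb{R}^m$, so $Du(x)Du(x)^{*}=I_{\mathbb{R}^m}$; hence $u(z(x))=p$ whenever $z(x)\in\mathcal{S}(x)$, and $z(x)$ is identified as the unique candidate in $\mathrm{Aff}(\mathcal{S}(x))\cap u^{-1}(p)$. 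Conversely, if $u(z(x))=p$, Lemma \ref{lem:boundary} forces $z(x)\in\mathcal{S}(x)$, since otherwise the interior point $x$ would sit in two distinct leaves. Moreover, on $\mathrm{int}T_m$ one has $\alpha_m(y)=\mathrm{dist}(y,\partial\mathcal{S}(y))$: the inequality $\geq$ is immediate by choosing $V=V_{\mathcal{S}(y)}$, and the reverse follows from Lemma \ref{lem:boundary} together with the fact that any $m$-dimensional subspace witnessing $\alpha_m(y)>\mathrm{dist}(y,\partial\mathcal{S}(y))$ would force $y$ into the interior of a second leaf. Consequently $x\in\mathrm{int}T_{pij}$ is equivalent to the conjunction of the Borel conditions $x\in\mathrm{int}T_m$, $z(x)\in S_p^i$, and $\alpha_m(z(x))\geq 1/j$. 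The map $G=(w\circ z,\,u-p)$ on $\mathrm{int}T_{pij}$ is then a composition of Borel maps, and it is injective because $w$ is injective on $S_p^i$ (since $v\circ w=\mathrm{id}$ on $S_p^i$ by Lemma \ref{lem:cover}) and $u$ is injective on each leaf. By the Lusin--Souslin theorem (injective Borel image of a Borel subset of a Polish space is Borel), $G(\mathrm{int}T_{pij})$ is Borel.

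For the measure-zero claim, $\partial T_m$ is Borel by Corollary \ref{col:borel}. By Lemma \ref{lem:cluster}, $T_m$ is covered by the countable family $(T_{pij})_{p\in Q,\,i,j\in\mathbb{N}}$, so it suffices to prove $\lambda(\partial T_m\cap T_{pij})=0$ for each cluster. Fix $\rho>0$, set $E_j:=\{z\in S_p^i : \mathrm{dist}(z,\partial\mathcal{S}(z))\geq 1/j\}$, and introduce the parameter set
\[
C_\rho := \bigl\{(w(z),b) : z\in E_j,\ b\in(u(\mathcal{S}(z))-p)\cap\overline{B(0,\rho)}\bigr\}\subset\mathbb{R}^{n-m}\times\mathbb{R}^m,
\]
together with the extension $\tilde F(a,b):=v(a)+Du(v(a))^{*}(b)$. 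Reusing the Lipschitz estimate for $a\mapsto Du(v(a))^{*}$ established in the proof of Lemma \ref{lem:efge}, which depends only on the distance-from-boundary bound $1/j$ via Corollary \ref{col:strength}, the map $\tilde F$ is Lipschitz on $w(E_j)\times\overline{B(0,\rho)}\supset C_\rho$. For each $a\in w(E_j)$, the $a$-fiber of $C_\rho$ is the closed $m$-dimensional convex set $K_a^\rho:=(u(\mathcal{S}(v(a)))-p)\cap\overline{B(0,\rho)}$ in $\mathbb{R}^m$, whose topological boundary has $\lambda^m$-measure zero.

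By Fubini, the boundary-fiber subset $C_\rho^{\partial}:=\{(a,b)\in C_\rho : b\in\partial K_a^\rho\}$ has $\lambda^n$-measure zero, and therefore so does its Lipschitz image $\tilde F(C_\rho^{\partial})$. A short computation using that $u|_{\mathcal{S}(z)}$ is affine with derivative $TP$ shows $\tilde F(w(z),u(x)-p)=x$ for every $x\in\mathcal{S}(z)$, so every $x\in\partial T_m\cap T_{pij}\setminus B(u)$ with $\norm{u(x)-p}\leq\rho$ belongs to $\tilde F(C_\rho^{\partial})$. Combined with $\lambda(B(u))=0$ from Corollary \ref{col:unique}, letting $\rho\to\infty$ and summing over the countably many clusters yields $\lambda(\partial T_m)=0$. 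The principal technical obstacle will be extending the Lipschitz estimate of Lemma \ref{lem:efge} from the open set $G(T_{pij}^{0,\rho})$ to the full closed parameter set $C_\rho$, together with verifying the measurability of $C_\rho$ and $C_\rho^{\partial}$ required to invoke Fubini's theorem.
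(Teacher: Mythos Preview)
Your proposal is correct and reaches the same conclusions, but the route differs from the paper's in two notable respects.

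\textbf{Order of the Borel claims.} You establish Borelness of $\mathrm{int}T_{pij}$ first, via the explicit Borel map $z(x)=x+Du(x)^{*}(p-u(x))$ and the identity $\alpha_m=\mathrm{dist}(\cdot,\partial\mathcal S(\cdot))$ on $\mathrm{int}T_m$, and then deduce Borelness of $G(\mathrm{int}T_{pij})$ by Lusin--Souslin. The paper proceeds in the opposite direction: it builds the Minkowski-type functional $\gamma_{m,\rho}$, shows directly that
\[
G(T_{pij}^{0,\rho})=\{(a,b):a\in\Lambda,\ \gamma_{m,\rho}(v(a),b)<1\}
\]
is Borel, and then pulls back through the Lipschitz bijection $F$ (invoking Federer's injective-image result) to get $\mathrm{int}T_{pij}$ Borel. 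Your direction is more economical in that it avoids constructing $\gamma_{m,\rho}$ for this step; the price is the appeal to the full Lusin--Souslin theorem rather than the Lipschitz special case.

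\textbf{Capturing $\partial T_m$.} For the null set, both arguments disintegrate over $a$ and use that $\partial K_a^\rho$ has $\lambda^m$-measure zero. Your inclusion $\partial T_m\cap T_{pij}\setminus B(u)\cap\{\norm{u-p}\le\rho\}\subset\tilde F(C_\rho^{\partial})$ is obtained by the direct computation $\tilde F(w(z),u(x)-p)=x$, which is cleaner than the paper's route: the paper extends $F$ by Kirszbraun and then runs a sequential compactness argument (choose $x_l\to x$ in $T_{pij}^{0,\rho}$, extract a limit of $G(x_l)$, and argue it must lie in the boundary set $\mathcal G_\rho$). Your way sidesteps that limiting argument entirely.

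The technical point you flag---measurability of $C_\rho^{\partial}$ so that Fubini applies---is exactly what the paper's $\gamma_{m,\rho}$ was designed for: the paper writes the relevant set as $\{(a,b):a\in\mathrm{cl}\Lambda,\ \gamma_{m,\rho}(v(a),b)=1\}$, Borel because $\gamma_{m,\rho}$ is. You can either import that lemma, or replace it with the equivalent Borel description $\{(a,b):\norm{u(\tilde F(a,b))-p}=\norm{b}\}$ for $C_\rho$ together with a countable-$\epsilon$ characterisation of the boundary fibre; either closes the gap. The Lipschitz extension of $\tilde F$ to $w(E_j)\times\overline{B(0,\rho)}$ is immediate from Corollary~\ref{col:strength}, since the bound there depends only on $\sigma_i\ge 1/j$ and not on strict inequality.
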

\begin{proof}
Fix $p\in Q$ and $i,j\in\mathbb{N}$. Recall the Borel set $S_p^i\subset\mathbb{R}^n$ and Lipschitz mapping $w\colon \mathbb{R}^n\to\mathbb{R}^{n-m}$ from Lemma \ref{lem:cover}. Since $w$ is injective on $S_p^i$ it follows from \cite[\S 2.2.10]{Federer} that $w(S_p^i)$ is a Borel subset of $\mathbb{R}^{n-m}$. Moreover, the set $\Lambda$, defined in (\ref{eqn:lambda}), is given by
\begin{equation}\label{eqn:lam}
\Lambda=\Big\{a\in w(S_p^i)\big| \alpha_m(w^{-1}(a))>1/j\Big\}
\end{equation}
as follows by the definition (\ref{eqn:gie}) and Lemma \ref{lem:cover}.
Let $\rho>0$. Definition of the cluster $T_{pij}^{0,\rho}$ implies that
\begin{equation*}
G(T_{pij}^{0,\rho})=\Big\{(a,b)\in\mathbb{R}^{n-m}\times\mathbb{R}^m\big| a\in \Lambda, b\in u(\mathrm{int}\mathcal{S}_{v(a)})-u(v(a)) ,\norm{b}\leq\rho\Big\}.
\end{equation*}
Here $\mathcal{S}_{v(a)}$ is the unique $m$-dimensional leaf of $u$ containing $v(a)$.
Note that Proposition \ref{pro:minkowski} and Lemma \ref{lem:minkowski} tells us that if $a\in\Lambda$, then 
\begin{equation*}
b\text{ belongs to interior of }u(\mathcal{S}_{v(a)})-u(v(a))\cap\big\{y\in\mathbb{R}^m\big|\norm{y}\leq\rho\big\}
\end{equation*}
if and only if
\begin{equation*}
\gamma_{m,\rho}(v(a),b)<1.
\end{equation*}
This is to say,
\begin{equation}\label{eqn:image}
G(T_{pij}^{0,\rho})=\Big\{(a,b)\in\mathbb{R}^{n-m}\times\mathbb{R}^m\big| a\in \Lambda,\gamma_{m,\rho}(v(a),b)<1\Big\}.
\end{equation}
As $\gamma_{m,\rho}$ is Borel measurable, it follows that $G(T_{pij}^{0,\rho})$ is a Borel set.
As
\begin{equation}\label{eqn:mus}
\mathrm{int}T_{pij}=\bigcup_{\rho\in\mathbb{N}} T_{pij}^{0,\rho}
\end{equation}
we conclude that $G(\mathrm{int}T_{pij})$ is Borel as well.

Clearly, $\Lambda$ is also a Borel set. Lemma \ref{lem:efge} shows that $F$, the inverse of $G$ on its image, is well-defined and injective on $G(\mathrm{int}T_{pij})$. 
On the sets $G(T_{pij}^{0,\rho})$, $\rho\in\mathbb{N}$, function $F$ is Lipschitz and 
\begin{equation*}
T_{pij}^{0,\rho}=F(G(T_{pij}^{0,\rho})).
\end{equation*}
Using \cite[\S 2.2.10]{Federer}, we see that $T_{pij}^{0,\rho}$ is a Borel set. Using (\ref{eqn:mus}) again, we see that $\mathrm{int}T_{pij}$ is a Borel set.

We shall show that $\partial T_m$ has Lebesque measure zero. Recall, that Corollary \ref{col:borel} tells us that $\partial T_m$ is a Borel set. 
Consider the set
\begin{equation*}
\mathcal{G}_{\rho}=\Big\{(a,b)\in\mathbb{R}^{n-m}\times\mathbb{R}^m\big| a\in\mathrm{cl}\Lambda,\gamma_{m,\rho}(v(a),b)=1\Big\}.
\end{equation*}
By Fubini's theorem, $\lambda(\mathcal{G}_{\rho})=0$, as boundaries of convex sets have Lebesgue measure zero. 

Recall that $F$ is a Lipschitz map on $G(T_{pij}^{0,\rho})$. Using the Kirszbraun theorem (see e.g \cite{Kirszbraun, Schoenberg}) we extend the restriction of $F$ to $G(T_{pij}^{0,\rho})$ to a Lipschitz map $F_{\rho}$ on $\mathbb{R}^{n-m}\times\mathbb{R}^m$.

Now, for any such extension,
\begin{equation*}
F_{\rho}(\mathcal{G}_{\rho})\supset\partial T_m\cap \big\{x\in T_{pij}\big| \norm{u(x)-p}\leq\rho\big\}.
\end{equation*}
Indeed, let 
\begin{equation*}
x\in \partial T_m\cap\big\{x\in T_{pij}\big| \norm{u(x)-p}\leq\rho\big\}.
\end{equation*}
Choose a sequence $(x_l)_{l=1}^{\infty}$ in $T_{pij}^{0,\rho}$ converging to $x$. The sequence $(G(x_l))_{l=1}^{\infty}$ is bounded by (\ref{eqn:image}) and by (\ref{eqn:lam}). Hence, passing to a subsequence we may assume that it converges to some 
\begin{equation*}
(a,b)\in\mathbb{R}^{n-m}\times\mathbb{R}^m.
\end{equation*}
If $(a,b)\in G(T_{pij}^{0,\rho})$, then there would exist $x'\in T_{pij}^{0,\rho}$ with $G(x')=(a,b)$ and thus
\begin{equation*}
x'=F_{\rho}(a,b)=\lim_{l\to\infty}F(G(x_l))=\lim_{n\to\infty}x_l=x.
\end{equation*}
This would contradict the fact that $x\in \partial T_m$. Hence $(a,b)\notin G(T_{pij}^{0,\rho})$. It follows that $(a,b)$ belongs to the boundary of $G(T_{pij}^{0,\rho})$, which is contained in $\mathcal{G}_{\rho}$.

Therefore we can use $\lambda(\mathcal{G}_{\rho})=0$ and the fact that images under Lipschitz maps of sets of Lebesgue measure zero have Lebesgue measure zero (see e.g. \cite[\S 3.2.3]{Federer}), to conclude that 
\begin{equation*}
\lambda\big(\partial T_m \cap \big\{x\in T_{pij}\big| \norm{u(x)-p}\leq\rho\big\}\big)=0,
\end{equation*}
and hence is Lebesgue measurable. By Lemma \ref{lem:cluster} the sets $T_{pij}$ form a countable covering of $\partial T_m$. It follows that $\lambda(\partial T_m)=0$. This concludes the proof.
\end{proof}

\begin{corollary}
For any $p\in Q$, $i,j\in\mathbb{N}$, the set $T_{pij}$ is Lebesgue measurable.
\end{corollary}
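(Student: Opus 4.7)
The plan is to exploit the decomposition $T_{pij} = \mathrm{int} T_{pij} \sqcup (T_{pij} \setminus \mathrm{int} T_{pij})$ and show the first piece is Borel while the second is contained in a Lebesgue null Borel set; completeness of Lebesgue measure then finishes the argument.

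First, I would recall that by Lemma \ref{lem:measurable} the set $\mathrm{int} T_{pij}$ is Borel, and that $\partial T_m$, the union of all relative boundaries of $m$-dimensional leaves of $u$, is a Borel set of Lebesgue measure zero. Second, I would observe from the very definition of $T_{pij}$ and $\mathrm{int} T_{pij}$ that
\begin{equation*}
T_{pij} \setminus \mathrm{int} T_{pij} \subset \partial T_m,
\end{equation*}
because each point of $T_{pij}$ lies in some $m$-dimensional leaf $\mathcal{S}$, and if it fails to lie in $\mathrm{int}\mathcal{S}$ it must lie in $\partial\mathcal{S}$, which is part of $\partial T_m$.

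Consequently $T_{pij} \setminus \mathrm{int} T_{pij}$ is a subset of a Borel set of Lebesgue measure zero, hence is itself Lebesgue measurable (with measure zero) by completeness of the Lebesgue measure. Writing
\begin{equation*}
T_{pij} = \mathrm{int} T_{pij} \, \cup \, \big(T_{pij} \setminus \mathrm{int} T_{pij}\big),
\end{equation*}
we express $T_{pij}$ as a union of two Lebesgue measurable sets, so $T_{pij}$ is Lebesgue measurable. There is no real obstacle here; the corollary is essentially a bookkeeping consequence of the two non-trivial statements already proven in Lemma \ref{lem:measurable}, namely the Borel-measurability of $\mathrm{int} T_{pij}$ and the vanishing of the Lebesgue measure of $\partial T_m$.
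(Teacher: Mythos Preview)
Your proof is correct and follows essentially the same approach as the paper: decompose $T_{pij}$ as the union of the Borel set $\mathrm{int}T_{pij}$ and the set $T_{pij}\setminus\mathrm{int}T_{pij}\subset\partial T_m$, which has Lebesgue measure zero by Lemma~\ref{lem:measurable}. The paper's one-line proof says exactly this (writing the second piece as $\partial T_m\cap T_{pij}$), and your version merely spells out the completeness argument a bit more explicitly.
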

\begin{proof}
$T_{pij}$ is a union of a Borel set $\mathrm{int}T_{pij}$ and a set $\partial T_m\cap T_{pij}$ of Lebesgue measure zero.
\end{proof}

\begin{remark}
The clusters $T_{pij}$ may be taken to be disjoint. Indeed, let $(T_k)_{k=1}^{\infty}$ be a renumbering of the set of clusters. Set for $l\in\mathbb{N}$
\begin{equation*}
T_l'=T_l\setminus \bigcup_{n=1}^{l-1}T_n
\end{equation*} 
and 
\begin{equation*}
\mathrm{int}T_l'=\mathrm{int}T_l\setminus \bigcup_{n=1}^{l-1}\mathrm{int}T_n.
\end{equation*} 
Note that the structure of the clusters $T'_{pij}$ remains the same. For each $T_{pij}$ there exists a Borel subset $S_{pij}=T_{pij}\cap S_p^i$ of $S_p^i\subset\mathbb{R}^n$ on which there are Lipschitz maps 
\begin{equation*}
w\colon\mathbb{R}^n\to\mathbb{R}^{n-m}\text{ and }v\colon\mathbb{R}^{n-m}\to\mathbb{R}^n\end{equation*}
such that
\begin{equation*}
v(w(x))=x\text{ for all } x\in S_{pij}.
\end{equation*}
Indeed, the new cluster is a subset of the old one, so the former maps suffice. From the modification procedure it follows also that Lemma \ref{lem:cluster} still holds true. Moreover, the leaf $\mathcal{S}$ corresponding to a point $z\in S_p\cap S_{pij}$ satisfies
\begin{equation*}
\mathrm{dist}(z,\partial\mathcal{S})>1/j.
\end{equation*}
Also the assertions of Lemma \ref{lem:efge} hold true with the old maps and so does the assertions of Lemma \ref{lem:measurable}, as follows from the modification procedure.
\end{remark}

\section{Disintegration of measure}\label{sec:disin}

The aim of this section is to prove the following theorem.

\begin{theorem}\label{thm:dis}
Let $u\colon\mathbb{R}^n\to\mathbb{R}^m$ be a $1$-Lipschitz map with respect to the Euclidean norms. Then there exists a map $\mathcal{S}\colon\mathbb{R}^n\to CC(\mathbb{R}^n)$ such that for $\lambda$-almost every $x\in\mathbb{R}^n$ the set $\mathcal{S}(x)$ is a maximal closed convex set in $\mathbb{R}^n$ such that $u|_{\mathcal{S}(x)}$ is an isometry. Moreover, there exist a Borel measure on $CC(\mathbb{R}^n)$ and Borel measures $\lambda_{\mathcal{S}}$ such that 
\begin{equation*}
\mathcal{S}\mapsto \lambda_{\mathcal{S}}(A)\text{ is }\nu\text{-measurable for any Borel set }A\subset\mathbb{R}^n
\end{equation*}
and for $\nu$-almost every $\mathcal{S}$ we have $\lambda_{\mathcal{S}}(\mathcal{S}^c)=0$,
and for any $A\subset\mathbb{R}^n$
\begin{equation*}
\lambda(A)=\int_{CC(\mathbb{R}^m)} \lambda_{\mathcal{S}}(A)d\nu(\mathcal{S}).
\end{equation*}
Moreover, for $\nu$-almost every leaf $\mathcal{S}$ of dimension $m$, the measure $\lambda_{\mathcal{S}}$ is equivalent to the restriction to $\mathcal{S}$ of the $m$-dimensional Hausdorff measure.
\end{theorem}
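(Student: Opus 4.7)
The plan is to construct the disintegration abstractly via the disintegration theorem, then identify the conditional measures explicitly on each cluster using the area formula. First, I would define $\mathcal{S}:\mathbb{R}^n\to CC(\mathbb{R}^n)$ by sending each $x\notin B(u)$ to the unique leaf through $x$ and setting $\mathcal{S}(x)=\{x\}$ on $B(u)$; since $\lambda(B(u))=0$ by Corollary \ref{col:unique}, the choice on $B(u)$ is immaterial. For Borel measurability with respect to the Wijsman topology, which is generated by the functionals $C\mapsto\mathrm{dist}(y,C)$ for $y$ in a countable dense subset of $\mathbb{R}^n$, it suffices to show that $x\mapsto\mathrm{dist}(y,\mathcal{S}(x))$ is Borel for each such $y$. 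On each cluster $\mathrm{int}T_{pij}$ the leaf through $x$ is the image of an explicit affine parametrization built from $G(x)$ and $F$, so the distance is a Borel function of $x$ by Lemma \ref{lem:measurable}; leaves of lower dimension (Corollary \ref{col:borel}) and the $\lambda$-null set $\partial T_m$ are handled in the same spirit.

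Next, since $CC(\mathbb{R}^n)$ with the Wijsman topology is Polish, I would apply the disintegration theorem to $\lambda$ (localized on large balls so as to be finite) and to the push-forward $\nu=\mathcal{S}_*\lambda$: this produces Borel conditional measures $\lambda_\mathcal{S}$ with $\lambda_\mathcal{S}(\mathcal{S}^c)=0$, uniqueness holding $\nu$-a.e.

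The heart of the argument is identifying $\lambda_\mathcal{S}$ on an $m$-dimensional leaf. Fix a cluster $T_{pij}$. By Lemma \ref{lem:efge}, $F(a,b)=v(a)+Du(v(a))^*b$ is a bijection from $G(\mathrm{int}T_{pij})$ onto $\mathrm{int}T_{pij}$ and Lipschitz on each $G(T_{pij}^{0,\rho})$. Applying the area formula to this Lipschitz change of variables on $\mathbb{R}^{n-m}\times\mathbb{R}^m$ and then Fubini yields
\begin{equation*}
\int_{T_{pij}^{0,\rho}}\varphi\,d\lambda=\int_\Lambda\!\!\int_{K_{\rho,a}}\varphi(F(a,b))\,J_F(a,b)\,db\,da,
\end{equation*}
where $K_{\rho,a}$ is the $a$-section. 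Because the leaf through $v(a)$ is $m$-dimensional, $Du(v(a))^*$ is an isometry of $\mathbb{R}^m$ onto the tangent space of that leaf, so $F(a,\cdot)$ is an affine isometric embedding whose push-forward of Lebesgue measure is $\mathcal{H}^m$ restricted to the image. Letting $\rho\to\infty$ and setting $\kappa_a:=J_F(a,\cdot)\circ F(a,\cdot)^{-1}$, I obtain
\begin{equation*}
\int_{\mathrm{int}T_{pij}}\varphi\,d\lambda=\int_\Lambda\bigg(\int_{\mathrm{int}\mathcal{S}_{v(a)}}\varphi\,\kappa_a\,d\mathcal{H}^m\bigg)da,
\end{equation*}
and the uniqueness clause of the disintegration theorem identifies $\lambda_\mathcal{S}$, $\nu$-a.e., with $\kappa_a\,\mathcal{H}^m\llcorner\mathcal{S}$; positivity of $\kappa_a$ on each stratum $T_{pij}^{\lambda,\rho}$ follows from the bi-Lipschitz bound on $F$ (Lemma \ref{lem:efge} combined with Corollary \ref{col:strength}), which yields the asserted equivalence.

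I expect the main obstacle to be this last step: justifying the area formula only on the exhausting subfamily $G(T_{pij}^{0,\rho})$ where $F$ is globally Lipschitz, passing cleanly to the limit $\rho\to\infty$, and patching the local representations across the (disjointified) family $(T_{pij})_{p,i,j}$ so that they reproduce the abstract $\lambda_\mathcal{S}$ on $\nu$-conull sets. Verifying positivity of $J_F$, which amounts to bi-Lipschitz regularity of the cluster parametrization, is the other delicate point; measurability against the Wijsman topology is subtle but routine once the Borel structure of the clusters from Lemma \ref{lem:measurable} is in hand.
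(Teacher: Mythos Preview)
Your proposal is correct and follows essentially the same strategy as the paper: define $\mathcal{S}$, verify its Borel measurability, apply the abstract disintegration theorem on a Polish target, and then identify the conditional measures on each cluster via the area formula for the bi-Lipschitz map $F$ of Lemma~\ref{lem:efge}, Fubini, and the uniqueness clause of the disintegration theorem. One small technical point: the paper defines $\mathcal{S}(x)=\{x\}$ on the Borel non-differentiability set $N(u)\supset B(u)$ rather than on $B(u)$ itself, since $B(u)$ is not known to be Borel; you should make the same adjustment so that $\mathcal{S}$ is genuinely Borel.
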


Before the we provide a proof let us define necessary tools and note its several properties.

Let $CL(\mathbb{R}^m)$ denote the space of closed non-empty sets in $\mathbb{R}^m$. On $CL(\mathbb{R}^m)$ we introduce the Wijsman topology (see \cite{Wijsman}). It is the weakest topology such that the mappings
\begin{equation*}
A\mapsto \mathrm{dist}(x,A)
\end{equation*}
are continuous for all $x\in\mathbb{R}^m$. By a result of Beer (see \cite{Beer2}), the set $CL(\mathbb{R}^m)$ equipped with this topology is a Polish space. Let $CC(\mathbb{R}^m)$ denote the set of all closed convex, non-empty sets in $\mathbb{R}^m$. Then $CC(\mathbb{R}^m)$ is a closed subset of $CL(\mathbb{R}^m)$, hence also a Polish space.
Let $X$ be a measurable space. In \cite{Hess} (see also \cite{Beer3}) it is proved that a function $f\colon X\to CL(\mathbb{R}^m)$ is measurable if and only if it is measurable as a multifunction. The latter is defined by the condition that for any open set $U\subset\mathbb{R}^m$ the set
\begin{equation*}
\{x\in X| f(x)\cap U\neq \emptyset\}
\end{equation*}
is measurable in $X$.

Let $X,Y$ be two Polish spaces. Let $\eta$ be a non-negative Borel probability measure on $X$, $T\colon X\to Y$ be a Borel measurable map and let $\nu$ be the push-forward of $\eta$ by $T$, that is a Borel probability measure on $Y$ such that for a Borel set $A$ in $Y$ we have
\begin{equation*}
\nu(A)=\eta(T^{-1}(A)).
\end{equation*}
A \emph{disintegration} of $\eta$ with respect to $T$ is a collection of Borel probability measures $\{\eta_y| y\in Y\}$ on $X$, such that if $y\in T(X)$, then $\eta_y(T^{-1}(y))=1$ for $\nu$-almost every $y\in Y$, if $f$ is an integrable function with respect to $\eta$, then for $\nu$-almost every $y\in Y$, $f$ is integrable with respect to $\eta_y$, the function
\begin{equation*}
y\mapsto \int_X fd\eta_y
\end{equation*}
is $\nu$-measurable, and moreover
\begin{equation*}
\int_X fd\eta=\int_Y\int_X fd\eta_y d\nu.
\end{equation*}
We shall also say that $\{\eta_y|y\in Y\}$ are conditional measures.

We shall use the following theorem (see e.g. \cite{Garling}). We refer also to \cite{Rokhlin} for a more general approach.

\begin{theorem}\label{thm:disinteg}
Suppose that $X,Y$ are Polish spaces and $\eta$ is a Borel probability measure on $X$ and $T\colon X\to Y$ is a Borel map. Then a disintegration of $\eta$ with respect to $T$ exists and moreover it is essentially unique, that is if $\{\eta_y|y\in Y\}$ and $\{\eta_y'|y\in Y\}$ are two disintegrations of $\eta$ then $\eta_y=\eta_y'$ for $\nu$-almost every $y\in Y$.
\end{theorem}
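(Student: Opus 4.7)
The plan is to build the leaf map, establish its Borel measurability into $CC(\mathbb{R}^n)$ with the Wijsman topology, apply the abstract disintegration Theorem \ref{thm:disinteg}, and finally use the Lipschitz change of variables from Lemma \ref{lem:efge} together with the area formula to identify the conditionals on $m$-dimensional leaves with Hausdorff measure.

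First I would define $\mathcal{S}(x)$ to be the unique leaf through $x$ for $x \in \mathbb{R}^n \setminus B(u)$; by Corollary \ref{col:unique} this is well-defined off a set of Lebesgue measure zero, and I extend $\mathcal{S}$ arbitrarily on $B(u)$. To verify Borel measurability as a map into $CC(\mathbb{R}^n)$ with the Wijsman topology, I use the characterization that measurability is equivalent to measurability as a multifunction, i.e.\ $\{x : \mathcal{S}(x)\cap U\neq\emptyset\}$ Borel for every open $U\subset \mathbb{R}^n$. Here I stratify by leaf dimension using Lemma \ref{lem:beta} and Lemma \ref{lem:interior}, apply Corollary \ref{col:borel}, and on each cluster $\mathrm{int}T_{pij}$ exploit the explicit parameterization: the leaf through $x\in\mathrm{int}T_{pij}$ is the image of an affine slice under the Lipschitz map $F$ from Lemma \ref{lem:efge}, so the condition of intersecting $U$ reduces to a Borel condition on coordinates $(a,b)=G(x)$.

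Next I apply Theorem \ref{thm:disinteg}. Since $\lambda$ is only $\sigma$-finite, I partition $\mathbb{R}^n$ into countably many bounded Borel sets (say annuli), normalise $\lambda$ restricted to each piece to a probability measure, disintegrate along $\mathcal{S}$ viewed as a Borel map into the Polish space $CC(\mathbb{R}^n)$, and patch the conditionals back together by linearity. This yields a $\sigma$-finite Borel measure $\nu$ on $CC(\mathbb{R}^n)$ and Borel measures $\lambda_{\mathcal{S}}$ with the required $\nu$-measurability of $\mathcal{S}\mapsto\lambda_{\mathcal{S}}(A)$, with $\lambda_{\mathcal{S}}(\mathcal{S}^c)=0$ for $\nu$-a.e.\ $\mathcal{S}$, and with $\lambda(A)=\int\lambda_{\mathcal{S}}(A)\,d\nu(\mathcal{S})$ for every Borel $A\subset\mathbb{R}^n$.

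For the final assertion I restrict attention to the Borel set $\mathrm{int}T_m$ of interiors of $m$-dimensional leaves, whose complement in $T_m$ is $\lambda$-null by Lemma \ref{lem:measurable}. I cover $\mathrm{int}T_m$ by the disjointified clusters $\mathrm{int}T_{pij}$ and work on each fixed cluster. On $T_{pij}^{\lambda,\rho}$ the maps $G$ and $F$ of Lemma \ref{lem:efge} are bi-Lipschitz, and $G$ sends each leaf $\mathcal{S}\subset T_{pij}$ to a slice $\{w(z)\}\times(u(\mathcal{S})-u(z))\subset \mathbb{R}^{n-m}\times\mathbb{R}^m$. By the area formula $G_{\#}(\lambda\lfloor T_{pij}^{\lambda,\rho})$ is absolutely continuous with density $J$, the Jacobian of $F$, which is bounded above and below away from zero on the image. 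Fubini's theorem on $\mathbb{R}^{n-m}\times\mathbb{R}^m$ produces a concrete disintegration of $G_{\#}\lambda$ along the first coordinate, whose conditionals are $J(a,\cdot)$ times Lebesgue measure on the slice. Pulling back by $F$ and applying the area formula again, these conditionals correspond to measures on the leaves that are equivalent to the $m$-dimensional Hausdorff measure on $\mathcal{S}$. By the essential uniqueness part of Theorem \ref{thm:disinteg}, these agree with $\lambda_{\mathcal{S}}$ for $\nu$-a.e.\ $\mathcal{S}\subset T_{pij}$. Exhausting $\lambda$ and $\rho$ through countable sequences and summing over the countable family of clusters yields the result for $\nu$-almost every $m$-dimensional leaf.

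The main obstacle is the last step: matching the abstract disintegration with the concrete fiberwise measures obtained from the area formula, which requires a careful appeal to essential uniqueness and a cluster-by-cluster argument controlling the Jacobian of $F$ uniformly on $T_{pij}^{\lambda,\rho}$ via Corollary \ref{col:strength}. Once the bi-Lipschitz structure on the parameters $(\lambda,\rho)$ is in place, the equivalence of $\lambda_{\mathcal{S}}$ and $\mathcal{H}^m\lfloor\mathcal{S}$ follows from the boundedness of $J$ from above and below on each such piece.
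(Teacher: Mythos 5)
Your proposal does not prove the statement under review. The statement is Theorem \ref{thm:disinteg} itself --- the abstract existence and essential uniqueness of a disintegration of a Borel probability measure $\eta$ on a Polish space $X$ with respect to a Borel map $T\colon X\to Y$ into another Polish space. Your argument instead proves (a version of) Theorem \ref{thm:dis}, the disintegration of Lebesgue measure along the leaves of $u$, and in doing so it explicitly \emph{invokes} Theorem \ref{thm:disinteg} as an ingredient (``apply the abstract disintegration Theorem \ref{thm:disinteg}''). That is circular with respect to the task: none of the leaf-map measurability, the cluster decomposition, the area formula, or the identification of conditionals with Hausdorff measure is relevant to establishing the abstract theorem, and the one step where the abstract theorem is needed is precisely the step you assume. (For what it is worth, the paper does not prove Theorem \ref{thm:disinteg} either; it cites it from the literature. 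But a blind proof of the statement cannot simply re-cite it, and certainly cannot use it.)

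A genuine proof of the statement would have to proceed along entirely different lines: push $\eta$ forward by $T$ to obtain $\nu$ on $Y$; for each Borel set $A\subset X$ apply the Radon--Nikodym theorem to the measure $B\mapsto \eta(A\cap T^{-1}(B))\ll\nu$ to get a candidate $y\mapsto \eta_y(A)$; use that the Borel $\sigma$-algebra of a Polish space is countably generated (and, say, a compact metrizable compactification together with the Riesz representation theorem, or a martingale convergence argument) to select versions that are simultaneously countably additive in $A$ for $\nu$-a.e.\ $y$, hence genuine Borel probability measures; verify $\eta_y(T^{-1}(y))=1$ for $\nu$-a.e.\ $y$ using that the graph of $T$ is Borel in $X\times Y$; and prove essential uniqueness by testing two disintegrations against a countable generating $\pi$-system of Borel sets. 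None of this appears in your proposal, so as an answer to the stated problem it has a gap that is not repairable by local fixes --- it is an argument for a different theorem.
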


\begin{proof}[Proof of Theorem \ref{thm:dis}]
In the previous sections we have defined leaves $\mathcal{S}$ of $u$. We have proved that for almost every $x\in\mathbb{R}^n$ there is a unique leaf $\mathcal{S}$ that contains $x$ and that the set of non-uniqueness $B(u)$ is contained in a Borel set $N(u)$ of non-differentiability of $u$, which is of measure zero, see Corollary \ref{col:unique}.

We have a well-defined map $\mathcal{S}\colon \mathbb{R}^n\to CC(\mathbb{R}^n)$ that assigns to any $x\in \mathbb{R}^n\setminus N(u)$ a unique leaf $\mathcal{S}(x)$ that contains $x$ and on $N(u)$ we set $\mathcal{S}(x)=\{x\}$.

Note that for any compact set $K\subset\mathbb{R}^n$ the set $\{x\in \mathbb{R}^n| \mathcal{S}(x)\cap K\neq \emptyset\}$ is equal to
\begin{equation*}
\bigcup_{k=0}^m \{x\in \mathbb{R}^n\setminus N(u)| \beta_k(x)>0, \sup\Big\{\frac{\norm{u(x)-u(y)}}{\norm{x-y}\}}=1\big| y\in K\Big\}\cup\big( K\cap N(u)\big).
\end{equation*}
Therefore by, Lemma \ref{lem:beta}, and the fact that the map 
\begin{equation*}
x\mapsto  \sup\Big\{\frac{\norm{u(x)-u(y)}}{\norm{x-y}\}}=1\big| y\in U\Big\}
\end{equation*}
is lower-semicontinuous, and that any open set $U\subset\mathbb{R}^n$ is a countable union of compact sets, the map
 $\mathcal{S}$ is Borel measurable.

We shall use this to obtain the disintegration of measures. Recall that $CC(\mathbb{R}^n)$ and $\mathbb{R}^m$ are Polish spaces and that $\mathcal{S}$ is a Borel measurable map.

Let us now consider a Borel probability measure $\lambda_r$ which is the normalised restriction of the Lebesgue measure to a Borel set $R$ of finite positive Lebesgue measure. Applying the  Theorem \ref{thm:disinteg} to the spaces $\mathbb{R}^n$ and $CC(\mathbb{R}^n)$ and map $\mathcal{S}$ we obtain a disintegration $\{\lambda_{\mathcal{S}}|\mathcal{S}\in CC(\mathbb{R}^n\}$ such that for $\nu$-almost every leaf $\mathcal{S}$ of $u$ we have 
\begin{equation*}
\lambda_{\mathcal{S}}(\mathcal{S})=1,
\end{equation*}
i.e. $\lambda_{\mathcal{S}}$ is concentrated on $\mathcal{S}$, as the preimages of every leaf $\mathcal{S}\in CC(\mathbb{R}^n)$ are exactly sets $\mathcal{S}\subset\mathbb{R}^n$, and
for any set $A\subset\mathbb{R}^n$ the function
\begin{equation*}
\mathcal{S}\mapsto \lambda_{\mathcal{S}}(A)
\end{equation*} 
is $\nu$-measurable and 
\begin{equation*}
\lambda_{r}(A)=\int_{CC(\mathbb{R}^n)}  \lambda_{\mathcal{S}}(A)d\nu(\mathcal{S}).
\end{equation*}
If we let $R$ vary and take a countable partition of $\mathbb{R}^n$ into pairwise disjoint sets of finite and positive Lebesgue measure, then adding up the above conditional measures, we obtain the conditional measures for the full Lebesgue measure.

We shall use the notation from previous sections. 
Fix $p\in Q$ and $i,j\in\mathbb{N}$ and consider the cluster $\mathrm{int}T_{pij}$. Let 
\begin{equation*}
\lambda_{pij}=\lambda|_{\mathrm{int}T_{pij}}.
\end{equation*}
By Lemma \ref{lem:efge}, the map $F$ is a bijection of $G(\mathrm{int}T_{pij})$ and $\mathrm{int}T_{pij}$. As for any $\rho>0$, $F$ is Lipschitz on $T_{pij}^{0,\rho}$ and these sets are a covering of the cluster $\mathrm{int}T_{pij}$ we may apply the area formula (see e.g. \cite[\S 3.2.5]{Federer}) to infer that for any integrable $\phi\colon\mathbb{R}^n\to\mathbb{R}$ 
\begin{equation}\label{eqn:area}
\int_{G(\mathrm{int}T_{pij})}\phi (F(x))J_nF(x)d\lambda(x)=\int_{\mathrm{int}T_{pij}}\phi(z)d\lambda(z).
\end{equation}
Here $J_nF$ denotes the $n$-dimensional Jacobian of $F$. Define a function
\begin{equation*}
f\colon\mathbb{R}^{n-m}\times\mathbb{R}^m\to\mathbb{R}
\end{equation*}
by the formula
\begin{equation*}
f(x)=J_nF(x)\text{ if }x\in G(\mathrm{int}T_{pij})\text{ and }f(x)=0\text{ otherwise.}
\end{equation*}
Observe that $f$ is non-negative and Borel measurable, as $G(\mathrm{int}T_{pij})$ is a Borel set by Lemma \ref{lem:efge}. Putting $\phi=\mathbf{1}_{\mathrm{int}T_{pij}}$ in (\ref{eqn:area}) shows that $f$ is integrable. 

By Fubini's theorem, the functions $f(x,\cdot)$ are integrable for almost every point $x\in\mathbb{R}^{n-m}$ and we have
\begin{equation*}
\int_{\mathbb{R}^{n-m}\times\mathbb{R}^m}\phi (F(z))f(z)d\lambda(z)=\int_{\mathbb{R}^{n-m}}\int_{\mathbb{R}^m}\phi (F(a,b))f(a,b)d\lambda(b)d\lambda(a).
\end{equation*}
Observe now that $(a,b)\in G(\mathrm{int}T_{pij})$ if and only if there exists an $m$-dimensional leaf $\mathcal{S}_a\subset T_{pij}$ intersecting $T_{pij}$ at a point $z$ and a point $x\in\mathcal{S}_a$ such that
\begin{equation*}
a=w(z)\text{ and }b=u(x)-u(z).
\end{equation*}
Note that $F$ on $G(\mathrm{int}\mathcal{S}_a)$ is an isometry. Therefore by a linear change of variables
\begin{equation*}
\int_{G(\mathrm{int}\mathcal{S}_a)}\phi(F(a,b))f(a,b)d\lambda(b)=\int_{\mathrm{int}\mathcal{S}_a}\phi f\circ G d\mathcal{H}_m.
\end{equation*}
Here $\mathcal{H}_m$ is the $m$-dimensional Hausdorff measure on $\mathbb{R}^n$. Let
\begin{equation*}
\Lambda=\big\{a\in\mathbb{R}^{n-m}|(a,0)\in G(\mathrm{int}T_{pij})\big\}.
\end{equation*}
Note that the map 
\begin{equation*}
\Lambda\ni a\mapsto \int_{\mathrm{int}\mathcal{S}_a}\phi f\circ G d\mathcal{H}_m
\end{equation*}
is Borel measurable and that for any integrable Borel measurable function $\phi$ we have
\begin{equation*}
\int_{\mathbb{R}^n}\phi d\lambda_{pij}=\int_{\Lambda}\Big(\int_{\mathrm{int}\mathcal{S}_a}\phi f\circ G d\mathcal{H}_m\Big) d\lambda(a)=\int_{\Lambda}\Big(\int_{\mathcal{S}_a}\phi f d\lambda'_{\mathcal{S}_a}\Big) m(a)d\lambda(a),
\end{equation*}
as the boundaries of convex sets have Hausdorff measures of appropriate dimension zero.
Here 
\begin{equation*}
d\lambda'_{\mathcal{S}_a}=\frac{f\circ G \mathbf{1}_{\mathcal{S}_a}d\mathcal{H}_m}{\int_{\mathcal{S}_a}f\circ G \mathbf{1}_{\mathcal{S}_a}d\mathcal{H}_m}
\end{equation*}
and $m(a)=\int_{\mathcal{S}_a}f\circ G \mathbf{1}_{\mathcal{S}_a}d\mathcal{H}_m$. Clearly $\lambda'_{\mathcal{S}_a}$ is equivalent to the Hausdorff measure on $\mathcal{S}_a$.
Define a map $H\colon \Lambda\to CC(\mathbb{R}^m)$
\begin{equation*}
a\mapsto\mathcal{S}_a
\end{equation*}
that sends a point $a\in\Lambda$ 
to the unique leaf
\begin{equation*}
\mathcal{S}_a=\mathrm{cl}F\big(G(\mathrm{int}T_{pij})\cap \{a\}\times\mathbb{R}^m)\big)
\end{equation*}
such that $a=w(z)$ for a point $z\in \mathrm{int}\mathcal{S}_a\cap T_{pij}$. 
Then $H$ is Borel measurable with respect to the Wijsman topology on $CC(\mathbb{R}^m)$.
Indeed, as noted before, the Borel measurability with respect to the Wijsman topology is equivalent to that for any open set $U\subset \mathbb{R}^m$ the set 
\begin{equation*}
\Big\{a\in\Lambda| U\cap \mathrm{cl} F\big(G(\mathrm{int}T_{pij})\cap \{a\}\times\mathbb{R}^m)\big)\neq\emptyset\Big\}
\end{equation*}
is Borel measurable. Let $\pi$ denote the projection on the first coordinate 
\begin{equation*}
\pi\colon \mathbb{R}^{n-m}\times\mathbb{R}^m\to\mathbb{R}^{n-m}.
\end{equation*}
As $U$ is open the above set is equal to
\begin{equation*}
\Big\{a\in\Lambda| \pi^{-1}(a)\cap G(\mathrm{int}T_{pij})\cap F^{-1}(U)\neq\emptyset\Big\},
\end{equation*}
which is Borel measurable, by the measurability of the map $a\mapsto \pi^{-1}(a)$. Moreover, $H$ is an injection.

By the above considerations we see that
\begin{equation*}
\int_{\mathbb{R}^n}\phi d\lambda_{pij}=\int_{\Lambda}\Big(\int_{\mathbb{R}^n}\phi d\lambda'_{\cdot}\Big)(H(a))m(a)d\lambda(a)=\int_{CC(\mathbb{R}^n)} \Big(\int_{\mathbb{R}^n}\phi d\lambda'_{\mathcal{S}}\Big) d \rho(\mathcal{S}),
\end{equation*}
where $\rho$ is the push forward of the measure $m(a)d\lambda(a)$ by the map $H$. Hence $\{\lambda'_{\mathcal{S}}| \mathcal{S}\in H(\Lambda)\}$ constitutes a disintegration of $\lambda_{pij}$ with respect to the map $\mathcal{S}$. Indeed, it follows by taking $\phi$ to be the indicator function of $\mathcal{S}^{-1}(C)$ for $C\subset CC(\mathbb{R}^n)$ that $\rho=\nu$. 

Applying the above result to each cluster separately we infer that for $\nu$-almost every $\mathcal{S}$ the conditional measures $\lambda'_{\mathcal{S}}$ are equivalent to the restriction of the $m$-dimensional Hausdorff measure to $\mathcal{S}$. 

The uniqueness part of Theorem \ref{thm:disinteg} and the fact that $\partial T_m$ has Lebesgue measure zero, see Lemma \ref{lem:measurable}, implies that the conditional measures $\lambda_{\mathcal{S}}$ are $\nu$-almost surely equivalent to the restriction of $\mathcal{H}_m$ to $\mathcal{S}$.  
\end{proof}


\begin{corollary}\label{col:dis}
Let $u\colon\mathbb{R}^n\to\mathbb{R}^m$ be a $1$-Lipschitz map with respect to the Euclidean norms. Let $\mu$ be a Borel measure on $\mathbb{R}^n$ that is absolutely continuous with respect to the Lebesgue measure. Then there exists a map $\mathcal{S}\colon\mathbb{R}^n\to CC(\mathbb{R}^n)$ such that for $\lambda$-almost every $x\in\mathbb{R}^n$ the set $\mathcal{S}(x)$ is a maximal closed convex set in $\mathbb{R}^n$ such that $u|_{\mathcal{S}(x)}$ is an isometry. Moreover, there exist a Borel measure on $CC(\mathbb{R}^n)$ and Borel measures $\mu_{\mathcal{S}}$ such that 
\begin{equation*}
\mathcal{S}\mapsto \mu_{\mathcal{S}}(A)\text{ is }\nu\text{-measurable for any Borel set }A\subset\mathbb{R}^n
\end{equation*}
and for $\nu$-almost every $\mathcal{S}$ we have $\mu_{\mathcal{S}}(\mathcal{S}^c)=0$,
and for any $A\subset\mathbb{R}^n$
\begin{equation*}
\mu(A)=\int_{CC(\mathbb{R}^n)} \mu_{\mathcal{S}}(A)d\nu(\mathcal{S}).
\end{equation*}
Moreover, for $\nu$-almost every leaf $\mathcal{S}$ of dimension $m$, the measure $\mu_{\mathcal{S}}$ is absolutely continuous with respect to the $m$-dimensional Hausdorff measure.
\end{corollary}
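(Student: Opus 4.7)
The plan is to deduce the corollary directly from Theorem \ref{thm:dis} by inserting the Radon--Nikodym density of $\mu$ with respect to $\lambda$ into the already-constructed Lebesgue disintegration. Concretely, first I would apply Theorem \ref{thm:dis} to the $1$-Lipschitz map $u$ to obtain the leaf map $\mathcal{S}\colon\mathbb{R}^n\to CC(\mathbb{R}^n)$, the Borel measure $\nu$ on $CC(\mathbb{R}^n)$, and a family $\{\lambda_{\mathcal{S}}\}_{\mathcal{S}\in CC(\mathbb{R}^n)}$ which disintegrates $\lambda$ along the leaves, and which on $m$-dimensional leaves is equivalent to $\mathcal{H}_m|_{\mathcal{S}}$. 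Let $h\colon\mathbb{R}^n\to[0,\infty)$ be a Borel representative of $d\mu/d\lambda$.

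Next I would define, for each $\mathcal{S}\in CC(\mathbb{R}^n)$ and each Borel set $A\subset\mathbb{R}^n$,
\begin{equation*}
\mu_{\mathcal{S}}(A)=\int_A h\,d\lambda_{\mathcal{S}}.
\end{equation*}
The map $A\mapsto\mu_{\mathcal{S}}(A)$ is then a Borel measure for every $\mathcal{S}$, and the concentration property $\mu_{\mathcal{S}}(\mathcal{S}^c)=0$ for $\nu$-almost every $\mathcal{S}$ is inherited from the corresponding statement $\lambda_{\mathcal{S}}(\mathcal{S}^c)=0$. The $\nu$-measurability of $\mathcal{S}\mapsto\mu_{\mathcal{S}}(A)$ for a fixed Borel $A$ follows from a standard monotone class argument: it holds for $A$ of the form $\{h\leq c\}\cap B$ with $B$ Borel, because then $\mu_{\mathcal{S}}(A)$ is a monotone limit of linear combinations of the measurable maps $\mathcal{S}\mapsto\lambda_{\mathcal{S}}(\{h\in I_k\}\cap B)$ for finite partitions $\{I_k\}$ of $[0,c]$, and each of these is $\nu$-measurable by Theorem \ref{thm:dis}. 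The global identity
\begin{equation*}
\mu(A)=\int_A h\,d\lambda=\int_{CC(\mathbb{R}^n)}\!\int_A h\,d\lambda_{\mathcal{S}}\,d\nu(\mathcal{S})=\int_{CC(\mathbb{R}^n)}\mu_{\mathcal{S}}(A)\,d\nu(\mathcal{S})
\end{equation*}
is then obtained by applying the integration formula of Theorem \ref{thm:dis} to the nonnegative Borel function $h\,\mathbf{1}_A$, passing from simple functions through monotone convergence.

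Finally, for the absolute continuity of $\mu_{\mathcal{S}}$ on $m$-dimensional leaves: since $\mu_{\mathcal{S}}=h\,d\lambda_{\mathcal{S}}$ has a density with respect to $\lambda_{\mathcal{S}}$, and by Theorem \ref{thm:dis} the latter is (for $\nu$-almost every $m$-dimensional $\mathcal{S}$) equivalent to the restriction of $\mathcal{H}_m$ to $\mathcal{S}$, the chain of absolute continuities yields the claim. The only delicate point in this whole argument is verifying the $\nu$-measurability of $\mathcal{S}\mapsto\mu_{\mathcal{S}}(A)$ in a clean way; I expect to handle it by first establishing it for $A$ in a generating $\pi$-system on which the integral reduces to a quantity already known to be $\nu$-measurable from Theorem \ref{thm:dis}, and then extending by the monotone class theorem, so the main obstacle is really bookkeeping rather than any new geometric input.
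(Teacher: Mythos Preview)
Your proposal is correct and follows exactly the route the paper intends: the paper's own proof of this corollary is simply ``Follows directly from Theorem \ref{thm:dis},'' and your argument is a careful unpacking of that sentence, multiplying the Lebesgue disintegration by the Radon--Nikodym density $h=d\mu/d\lambda$ and reading off each required property.
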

\begin{proof}
Follows directly from Theorem \ref{thm:dis}.
\end{proof}
%
%
\section{Optimal transport for vector measures}\label{sec:transport}

In this section we study the following variational problem. Let $\mu$ be a Borel, $\mathbb{R}^m$-valued measure such that $\mu(\mathbb{R}^m)=0$. We consider 
\begin{equation}\label{eqn:sup}
\sup\Big\{\int_{\mathbb{R}^n}\langle u,d\mu\rangle\big| u\colon\mathbb{R}^n\to\mathbb{R}^m\text{ is }1\text{-Lipschitz}\Big\}.
\end{equation}
Suppose that $\mu$ is absolutely continuous with respect to the Lebesgue measure. It was conjectured in \cite{Klartag} that if $u$ attains the above supremum, then the disintegration 
\begin{equation*}
\{\norm{\mu}_{\mathcal{S}}|\mathcal{S}\in CC(\mathbb{R}^m)\}
\end{equation*}
of $\norm{\mu}$ with respect to the partition formed by the leaves of $u$ satisfy
\begin{equation*}
\int_{\mathbb{R}^n} \frac{d\mu}{d\norm{\mu}}d\norm{\mu}_{\mathcal{S}}=0.
\end{equation*}
We provide a counterexample to this conjecture.

We also develop theory of optimal transport for vector measures, which provides a dual problem for (\ref{eqn:sup}).

\begin{definition}\label{defin:vari}
Let $\Omega$ be a topological space and let $\pi\colon \mathcal{B}(\Omega)\to\mathbb{R}^m$ be a vector measure on the $\sigma$-algebra $\mathcal{B}(\Omega)$ of Borel subsets of $\Omega$. We define its \emph{total variation} $\norm{\pi}\colon \mathcal{B}(\Omega)\to\mathbb{R}$ by
\begin{equation}\label{eqn:vari}
\norm{\pi}(A)=\sup\Big\{\sum_{i=1}^{\infty}\norm{\pi(A_i)}\big| A= \bigcup_{i=1}^{\infty} A_i, A_i\in\mathcal{B}(\Omega), A_i\cap A_j=\emptyset, i,j\in\mathbb{N}\Big\}
\end{equation}
for all $A\in\mathcal{B}(\Omega)$.
\end{definition}

It can be shown (see \cite{RudinRC}) that total variation of a vector measure is a non-negative finite measure.

Let $X$ be a metric space with metric $d$. Let $\mu$ be $\mathbb{R}^m$-valued measure on Borel $\sigma$-algebra $\mathcal{B}(X)$ of $X$. If $\pi$ is a $\mathbb{R}^m$-valued measure on Borel $\sigma$-algebra $\mathcal{B}(X\times X)$, we write $\mathrm{P}_1\pi$ for the first \emph{marginal} of $\pi$, i.e. the measure given by 
\begin{equation*}
\mathrm{P}_1\pi(A)=\pi(A\times X),
\end{equation*}
for all $A\in\mathcal{B}(X)$, and $\mathrm{P}_2\pi$ for the second \emph{marginal} of $\pi$, 
\begin{equation*}
\mathrm{P}_2\pi(B)=\pi(X\times B),
\end{equation*}
for all $B\in\mathcal{B}(X)$. We shall consider an optimization problem
\begin{equation}\label{eqn:KR}
\mathcal{I}(\mu)=\inf\bigg\{{\int_{X\times X}}d(x,y) d\norm{\pi}(x,y)\Big| \pi\in \Gamma(\mu) \bigg\}.
\end{equation}
Here $\Gamma(\mu)$ is the set of all $\mathbb{R}^m$-valued measures $\pi$ on $\mathcal{B}(X\times X)$ such that 
\begin{equation*}
\mu=\mathrm{P}_1\pi -\mathrm{P}_2\pi .
\end{equation*}
To check whether (\ref{eqn:KR}) defines a meaningful quantity, we have to check if $\Gamma(\mu)$ is non-empty.

We shall need the following definition.

\begin{definition}\label{defin:prod}
Let $\mathcal{F},\mathcal{G}$ be two $\sigma$-algebras on $X,Y$ respectively. Let $\sigma\colon \mathcal{F}\to\mathbb{R}^m$ and let $\theta \colon \mathcal{G}\to\mathbb{R}$ be two measures. An unique measure $\sigma\otimes \theta \colon \mathcal{F}\otimes\mathcal{G}\to\mathbb{R}^n$ such that
\begin{equation*}
\langle \sigma\otimes\theta,v\rangle= \langle \sigma, v\rangle \otimes \theta
\end{equation*}
for all $v\in\mathbb{R}^m$ we shall call the \emph{product measure}.
Here  $\langle \sigma,v \rangle \otimes \theta$ is the usual product measure of $\mathbb{R}$-valued measures.
\end{definition}

\begin{remark}
It is clear that the product measure exists. The product measure $\theta\otimes \sigma$ for measures $\sigma\colon \mathcal{F}\to\mathbb{R}^m$ and $\theta\colon\mathcal{G}\to\mathbb{R}$ is defined analogously.
\end{remark}
%

\begin{proposition}
$\Gamma(\mu)$ is non-empty if and only if 
\begin{equation}\label{eqn:equam}
\mu(X)=0.
\end{equation}
\end{proposition}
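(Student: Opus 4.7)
The plan is to handle the two directions of this equivalence separately; both reduce to short direct computations.

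For the necessity direction, suppose $\pi\in\Gamma(\mu)$. Then by definition $\mu=\mathrm{P}_1\pi-\mathrm{P}_2\pi$, and evaluating both sides on the whole space $X$ immediately yields
\begin{equation*}
\mu(X)=\pi(X\times X)-\pi(X\times X)=0,
\end{equation*}
since both marginals assign to $X$ the same value $\pi(X\times X)$.

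For the sufficiency direction, assume $\mu(X)=0$. If $X$ is empty then $\mu=0$ and the zero measure trivially belongs to $\Gamma(\mu)$, so I may fix any base point $x_0\in X$. The natural candidate is the product measure
\begin{equation*}
\pi=\mu\otimes\delta_{x_0},
\end{equation*}
where $\delta_{x_0}$ is the Dirac mass at $x_0$; this is well defined by Definition \ref{defin:prod} because $\mu$ is $\mathbb{R}^m$-valued and $\delta_{x_0}$ is a scalar probability measure. On a Borel rectangle $A\times B$ it takes the value $\pi(A\times B)=\mu(A)\,\delta_{x_0}(B)$, so the two marginals read
\begin{equation*}
\mathrm{P}_1\pi(A)=\mu(A)\,\delta_{x_0}(X)=\mu(A),\qquad \mathrm{P}_2\pi(B)=\mu(X)\,\delta_{x_0}(B)=0,
\end{equation*}
where the hypothesis $\mu(X)=0$ is used precisely to make the second marginal vanish. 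Therefore $\mathrm{P}_1\pi-\mathrm{P}_2\pi=\mu$ and $\pi\in\Gamma(\mu)$.

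There is essentially no obstacle here: the proof amounts to recognising that one can always \emph{dump} all the mass of $\mu$ at a single reference point, and the condition $\mu(X)=0$ is exactly what forces this dumping to produce zero second marginal. The only minor point worth flagging is that the product $\mu\otimes\delta_{x_0}$ is a vector-valued measure, but this is handled by the definition introduced just above, which reduces the construction to $m$ copies of the usual scalar product-measure. As a by-product, this construction yields the upper bound $\mathcal{I}(\mu)\leq\int_X d(x,x_0)\,d\norm{\mu}(x)$ for any base point $x_0\in X$, which will be useful later when the dual of (\ref{eqn:sup}) is identified.
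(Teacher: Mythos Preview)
Your proof is correct and follows essentially the same approach as the paper: the necessity direction is identical, and for sufficiency the paper takes $\pi=\mu\otimes\nu$ for an arbitrary Borel probability measure $\nu$, of which your choice $\nu=\delta_{x_0}$ is the natural special case. Your closing remark about the upper bound on $\mathcal{I}(\mu)$ is exactly the content of the subsequent proposition in the paper.
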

\begin{proof}
Clearly, if there exists $\pi\in\Gamma(\mu)$, then
\begin{equation*}
\mu(X)=\mathrm{P}_1\pi(X)-\mathrm{P}_2\pi(X)=\pi(X\times X)-\pi(X\times X)=0,
\end{equation*}
so the condition (\ref{eqn:equam}) is satisfied. Conversely, assume that (\ref{eqn:equam}) holds true. If $\mu$ is equal to zero, then $\pi=0$ belongs to $\Gamma(\mu)$. Let $\nu$ be any Borel probability measure on $X$. Set
\begin{equation*}
\pi=\mu\otimes \nu.
\end{equation*}
Here $\mu\otimes \nu$ is the product measure, see Definition~\ref{defin:prod}. 
Then for any $A\in\mathcal{B}(X)$, we have 
\begin{equation*}
\pi(A\times X)-\pi(X\times A)=\mu(A).\end{equation*}
This is to say, $\mathrm{P}_1\pi-\mathrm{P}_2\pi=\mu$.
\end{proof}

The quantity defined by (\ref{eqn:KR}) we shall call the Kantorovich-Rubinstein norm of $\mu$ (see e.g. \cite{Villani1, Villani2, Kolesnikov} for references regarding the Monge-Kantorovich problem).

\begin{proposition}
Assume that $\mu(\mathbb{R}^n)=0$. Then $\mathcal{I}(\mu)<\infty$ provided that
\begin{equation}\label{eqn:moment}
\int_{\mathbb{R}^n}d(x,x_0)d\norm{\mu}(x)<\infty
\end{equation}
for some (equivalently: any) $x_0\in X$.
\end{proposition}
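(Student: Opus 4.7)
The plan is to prove finiteness by exhibiting a single admissible $\pi\in\Gamma(\mu)$ whose cost is finite, following the same construction idea used in the non-emptiness proof above. Specifically, I would take
\begin{equation*}
\pi = \mu\otimes \delta_{x_0},
\end{equation*}
where $\delta_{x_0}$ is the Dirac probability measure at the reference point $x_0$. By the preceding proposition (which used an arbitrary Borel probability measure $\nu$ for the second factor), this $\pi$ lies in $\Gamma(\mu)$: indeed $\mathrm{P}_1\pi = \mu\cdot\delta_{x_0}(X)=\mu$ and $\mathrm{P}_2\pi = \mu(X)\cdot\delta_{x_0} = 0$ since $\mu(X)=0$.

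Next, I would identify the total variation of this concrete $\pi$. The claim is that
\begin{equation*}
\norm{\pi} = \norm{\mu}\otimes \delta_{x_0}.
\end{equation*}
The inequality $\norm{\pi}\le \norm{\mu}\otimes\delta_{x_0}$ follows because for any countable Borel partition $\{A_i\}$ of a set $E\in\mathcal{B}(X\times X)$, writing $A_i^{x_0}=\{x\in X\mid (x,x_0)\in A_i\}$ gives $\pi(A_i)=\mu(A_i^{x_0})$, and $\{A_i^{x_0}\}$ is a partition of $E^{x_0}=\{x\mid (x,x_0)\in E\}$, so $\sum_i\norm{\pi(A_i)}\le \norm{\mu}(E^{x_0})=(\norm{\mu}\otimes\delta_{x_0})(E)$. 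For the reverse inequality, one uses the definition (\ref{eqn:vari}) applied coordinatewise together with the fact that for a real scalar measure product $\langle\pi,v\rangle=\langle\mu,v\rangle\otimes\delta_{x_0}$ the variation behaves well under the Dirac factor.

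With $\norm{\pi}=\norm{\mu}\otimes\delta_{x_0}$ in hand, Fubini (for the scalar product measure) gives
\begin{equation*}
\int_{X\times X} d(x,y)\,d\norm{\pi}(x,y)=\int_X\int_X d(x,y)\,d\delta_{x_0}(y)\,d\norm{\mu}(x)=\int_X d(x,x_0)\,d\norm{\mu}(x),
\end{equation*}
which is finite by the moment hypothesis (\ref{eqn:moment}). This produces the required admissible transport and hence $\mathcal{I}(\mu)<\infty$.

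Finally, the parenthetical equivalence ``for some (equivalently: any) $x_0$'' is a triangle inequality remark: if (\ref{eqn:moment}) holds at $x_0$, then for any $x_1\in X$,
\begin{equation*}
\int_X d(x,x_1)\,d\norm{\mu}(x)\le \int_X d(x,x_0)\,d\norm{\mu}(x)+d(x_0,x_1)\norm{\mu}(X),
\end{equation*}
and $\norm{\mu}(X)<\infty$ because total variation of a vector measure is a finite non-negative measure. The only genuinely delicate step is the identification $\norm{\mu\otimes\delta_{x_0}}=\norm{\mu}\otimes\delta_{x_0}$; once this is granted the rest is bookkeeping, and even this can be bypassed by observing directly that $\norm{\pi}$ is concentrated on $X\times\{x_0\}$ and that the pushforward of $\pi$ under $(x,x_0)\mapsto x$ is exactly $\mu$, so its total variation equals $\norm{\mu}$.
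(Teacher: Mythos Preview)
Your proposal is correct and follows essentially the same route as the paper: both take $\pi=\mu\otimes\delta_{x_0}$, check $\pi\in\Gamma(\mu)$, and bound the cost by the first moment of $\norm{\mu}$, with the ``some $\Leftrightarrow$ any'' part handled by the triangle inequality. The only difference is that the paper simply asserts the inequality $\int d(x,y)\,d\norm{\pi}\le\int d(x,x_0)\,d\norm{\mu}$, whereas you justify it via the identification $\norm{\mu\otimes\delta_{x_0}}=\norm{\mu}\otimes\delta_{x_0}$; note that for the argument only the inequality $\norm{\pi}\le\norm{\mu}\otimes\delta_{x_0}$ is needed, and your derivation of that direction is already complete.
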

\begin{proof}
Define
\begin{equation*}
\pi=\mu\otimes \delta_{x_0}.
\end{equation*}
Here $\delta_{x_0}$ is a probability measure such that $\delta_{x_0}(\{x_0\})=1$.
Then $\pi\in\Gamma(\mu)$ and
\begin{equation}\label{eqn:finite}
\int_{X\times X}d(x,y)d\norm{\pi}(x,y)\leq \int_{X}d(x,x_0)d\norm{\mu}(x).
\end{equation}
This shows that $\mathcal{I}(\mu)<\infty$, provided that (\ref{eqn:moment}) is satisfied.
The equivalence of finiteness of
\begin{equation*}
\int_{\mathbb{R}^n}d(x,y)d\norm{\mu}(x)<\infty
\end{equation*}
for any $y\in X$ follows by triangle inequality. 
\end{proof}

\begin{definition}
We define the \emph{Wasserstein space} $\mathcal{W}_1(X,\mathbb{R}^m)$ of all Borel measures $\mu$ on $X$ with values in $\mathbb{R}^m$ such that 
\begin{equation*}
\mu(X)=0 \text{ and }
\int_{X}d(x,x_0)d\norm{\mu}(x)<\infty
\end{equation*}
for some $x_0\in X$.
We endow it with a norm $\norm{\mu}_{\mathcal{W}_1(X,\mathbb{R}^m)}=\mathcal{I}(\mu)$.
\end{definition}

Before we proceed let us recall some definitions.

\begin{definition}
Let $X$ be a Hausdorff topological space. We say that a non-negative measure $\mu\colon\mathcal{B}(X)\to\mathbb{R}$ is \emph{inner regular} if for any Borel set $B\in\mathcal{B}(X)$ we have
\begin{equation*}
\mu(B)=\sup\{\mu(K)|K\subset B, K \text{ is a compact set}\}.
\end{equation*}
We say that $\mu$ is \emph{locally finite} if for any $x\in X$ there exists a neighbourhood $U$ of $x$ such that
\begin{equation*}
\mu(U)<\infty.
\end{equation*}
We say that $\mu$ is a \emph{Radon measure} if it is inner regular and locally finite.
We say that $X$ is a \emph{Radon space} if every Borel probability measure on $X$ is a Radon measure.  
\end{definition}

\begin{lemma}\label{lem:lip}
Suppose that $X$ is a Radon space. Let $\mu\colon\mathcal{B}(X)\to\mathbb{R}^m$ be a Borel measure. Suppose that for any Lipschitz function $u\colon X\to\mathbb{R}^m$
\begin{equation*}
\int_{X}\langle u,  d\mu\rangle =0.
\end{equation*}
Then $\mu=0$.
\end{lemma}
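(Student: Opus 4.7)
The plan is to reduce to the scalar case and then approximate indicators of closed sets by Lipschitz functions, followed by a regularity argument. Write $\mu = (\mu_1,\dotsc,\mu_m)$ where each component $\mu_i\colon\mathcal{B}(X)\to\mathbb{R}$ is a signed Borel measure; since $\norm{\mu}$ is a finite non-negative measure, each $\mu_i$ has finite total variation. Testing the hypothesis against Lipschitz maps of the form $u = v e_i$ for $v\colon X\to\mathbb{R}$ scalar Lipschitz yields $\int_X v\,d\mu_i = 0$ for every $i\in\{1,\dotsc,m\}$ and every scalar Lipschitz $v$. It therefore suffices to prove the scalar statement: a finite real-valued Borel measure $\sigma$ on a Radon space that integrates every Lipschitz function to zero must vanish.

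Next, for a closed set $F\subset X$ I would introduce the bounded Lipschitz cutoffs
\begin{equation*}
v_n(x) = \max\{0,\; 1 - n\,\mathrm{dist}(x,F)\},
\end{equation*}
which are $n$-Lipschitz, uniformly bounded by one, and converge pointwise to $\mathbf{1}_F$ as $n\to\infty$. Since $|\sigma|$ is a finite measure, dominated convergence gives $\sigma(F) = \lim_{n\to\infty}\int_X v_n\,d\sigma = 0$. Hence $\sigma$ vanishes on every closed subset of $X$.

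The final step is to promote vanishing on closed sets to vanishing on all Borel sets. Let $\sigma = \sigma^+ - \sigma^-$ be the Jordan decomposition, with $\sigma^\pm$ mutually singular non-negative finite measures supported on disjoint Borel sets $A^\pm$ with $X = A^+\sqcup A^-$. Since $X$ is a Radon space, both $\sigma^+$ and $\sigma^-$ are inner regular. Given $\epsilon>0$, pick compact sets $K^\pm\subset A^\pm$ with $\sigma^\pm(A^\pm\setminus K^\pm)<\epsilon$. Compact sets are closed in a metric space, so $K^+$ and $K^-$ are closed; by the previous step $\sigma(K^+) = \sigma(K^-) = 0$. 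But $\sigma(K^+) = \sigma^+(K^+) \geq \sigma^+(X)-\epsilon$ and $-\sigma(K^-) = \sigma^-(K^-)\geq \sigma^-(X)-\epsilon$, so $\sigma^\pm(X)\leq \epsilon$. Letting $\epsilon\to 0$ forces $\sigma^+ = \sigma^- = 0$, whence $\sigma = 0$ and consequently $\mu = 0$.

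The main obstacle is the last step: the passage from vanishing on closed sets to vanishing identically, where the Radon-space hypothesis is essential via the inner regularity of the Jordan parts. The scalar reduction and the approximation of $\mathbf{1}_F$ by $v_n$ are routine, though one should keep track that the uniform bound $|v_n|\leq 1$ and the finiteness of $|\sigma|$ are what legitimise dominated convergence.
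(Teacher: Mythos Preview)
Your proof is correct and follows essentially the same approach as the paper: reduce to the scalar case, use the Hahn--Jordan decomposition, approximate via the Lipschitz cutoffs $\max\{0,1-c\,\mathrm{dist}(x,K)\}$, and invoke inner regularity from the Radon-space hypothesis. The only difference is organisational: you first show $\sigma$ vanishes on all closed sets and then apply this to compacts inside the Hahn sets, whereas the paper fixes a Borel subset $E$ of the positive Hahn set, approximates it directly by a compact $K$, and runs the cutoff argument there; the ingredients are identical.
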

\begin{proof}
We may assume that $m=1$. Let $\mu=\mu_+-\mu_-$ be the Hahn-Jordan decomposition of $\mu$. There exists two disjoint Borel sets $A,B\subset X$ with $\mu_+(A^c)=0$ and $\mu_-(B^c)=0$. Choose any Borel set $E\subset A$. As any finite measure on $X$ is inner regular, for any $\epsilon>0$, there exists a compact set $K\subset E$ such that
\begin{equation*}
\mu_+(E)\leq \mu_+(K)+\epsilon.
\end{equation*}
Define a function $u_{\epsilon}$ by the formula
\begin{equation*}
u_{\epsilon}(x)=(1-\frac{1}{\epsilon}\mathrm{dist}(x,K))\vee 0.
\end{equation*}
Then $u_{\epsilon}$ is Lipschitz, equal to $1$ on $K$ and equal to $0$ on the complement of \begin{equation*}
K_{\epsilon}=\{x\in X|\mathrm{dist}(x,K)\leq \epsilon\}.
\end{equation*}
Thus
\begin{equation*}
0=\int_{X}u_{\epsilon}d\mu=\mu_+(K)+\int_{K_{\epsilon}\setminus K}u_{\epsilon}d\mu,
\end{equation*}
Therefore, by the above,
\begin{equation*}
\mu_+(E)\leq \epsilon+\mu_+(K)\leq \epsilon+\mu_+(K_{\epsilon}\setminus K).
\end{equation*}
Letting $\epsilon\to 0$, we get $\mu_+(E)=0$. It follows that $\mu_+=0$. By symmetry, $\mu_-=0$. This is to say, $\mu=0$.
\end{proof}

\begin{remark}
In what follows, we shall always assume that underlying space $X$ is a Radon space.
\end{remark}

\begin{proposition}
The function $\mathcal{W}_1(X,\mathbb{R}^m)\ni\mu\mapsto\norm{\mu}_{\mathcal{W}_1(X,\mathbb{R}^m)}\in\mathbb{R}$ is a norm.
\end{proposition}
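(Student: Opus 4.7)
\bigskip

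\noindent\textbf{Proof proposal.} The plan is to verify the three defining properties of a norm in turn: homogeneity, the triangle inequality, and definiteness (non-negativity is immediate since $d(x,y) \geq 0$ and $\norm{\pi}$ is non-negative, and $\norm{0}_{\mathcal{W}_1(X,\mathbb{R}^m)} = 0$ since $\pi = 0 \in \Gamma(0)$).

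First I would handle homogeneity. For $c \in \mathbb{R}$, observe that the map $\pi \mapsto c\pi$ is a bijection of $\Gamma(\mu)$ onto $\Gamma(c\mu)$, since $\mathrm{P}_i$ is linear. Moreover $\norm{c\pi} = \abs{c}\norm{\pi}$ by Definition \ref{defin:vari}, so the infimum defining $\norm{c\mu}_{\mathcal{W}_1(X,\mathbb{R}^m)}$ equals $\abs{c}\,\norm{\mu}_{\mathcal{W}_1(X,\mathbb{R}^m)}$.

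Next, for the triangle inequality, fix $\mu_1,\mu_2 \in \mathcal{W}_1(X,\mathbb{R}^m)$ and $\epsilon>0$. Choose $\pi_i \in \Gamma(\mu_i)$ with $\int_{X\times X} d(x,y)\,d\norm{\pi_i}(x,y) \leq \norm{\mu_i}_{\mathcal{W}_1(X,\mathbb{R}^m)} + \epsilon$. Then $\pi_1 + \pi_2 \in \Gamma(\mu_1+\mu_2)$ by linearity of the marginal maps, and from Definition \ref{defin:vari} one obtains $\norm{\pi_1+\pi_2} \leq \norm{\pi_1} + \norm{\pi_2}$ as non-negative measures. Integrating $d(x,y)$ against this inequality and letting $\epsilon\to 0$ gives the triangle inequality.

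The step I expect to be the main obstacle is definiteness: $\norm{\mu}_{\mathcal{W}_1(X,\mathbb{R}^m)} = 0$ should imply $\mu=0$. The key is to establish the duality bound: for every $1$-Lipschitz $u\colon X \to \mathbb{R}^m$ and every $\pi \in \Gamma(\mu)$,
\begin{equation*}
\Big|\int_X \langle u, d\mu\rangle\Big| = \Big|\int_{X\times X} \langle u(x)-u(y), d\pi(x,y)\rangle\Big| \leq \int_{X\times X} \norm{u(x)-u(y)}\,d\norm{\pi}(x,y) \leq \int_{X\times X} d(x,y)\,d\norm{\pi}(x,y),
\end{equation*}
where the first equality uses $\mu = \mathrm{P}_1\pi - \mathrm{P}_2\pi$ and Fubini (after applying Definition \ref{defin:prod} to decompose in each scalar component), and the inequality on norms is a consequence of the definition of total variation applied coordinate-wise. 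Taking the infimum over $\pi \in \Gamma(\mu)$ yields $\bigl|\int_X \langle u,d\mu\rangle\bigr| \leq \norm{\mu}_{\mathcal{W}_1(X,\mathbb{R}^m)}$. Assuming the left-hand side of the hypothesis, this integral vanishes for every $1$-Lipschitz $u$. Any Lipschitz $u$ is a scalar multiple of a $1$-Lipschitz map, so the integral vanishes for every Lipschitz $u\colon X\to\mathbb{R}^m$, and Lemma \ref{lem:lip} (applicable since $X$ is assumed Radon) gives $\mu = 0$. The technical care needed here is in justifying Fubini for vector measures and verifying the total variation bound for $\pi$-integrals of $\mathbb{R}^m$-valued functions, which one reduces to the scalar case via Definition \ref{defin:prod} and the coordinate-wise Hahn decomposition.
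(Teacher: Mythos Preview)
Your proposal is correct and follows essentially the same route as the paper: homogeneity is straightforward, the triangle inequality comes from $\pi_1+\pi_2\in\Gamma(\mu_1+\mu_2)$ together with subadditivity of total variation, and definiteness is obtained via the duality bound $\bigl|\int_X\langle u,d\mu\rangle\bigr|\le L\int_{X\times X}d(x,y)\,d\norm{\pi}(x,y)$ for $L$-Lipschitz $u$, followed by an appeal to Lemma~\ref{lem:lip}. The only differences are cosmetic (order of the three properties and your more explicit treatment of homogeneity and the $\epsilon$-argument for the triangle inequality).
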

\begin{proof}
Let us first check that 
\begin{equation}\label{eqn:nondeg}
\norm{\mu}_{\mathcal{W}_1(X,\mathbb{R}^m)}=0 \text{ if and only if } \mu=0.
\end{equation}
If $\mu=0$, then $\pi=0$ belongs to $\Gamma(\mu)$, so $\norm{\mu}_{\mathcal{W}_1(X,\mathbb{R}^m)}=0$. Conversely, assume that $\norm{\mu}_{\mathcal{W}_1(X,\mathbb{R}^m)}=0$.
Choose any $L$-Lipschitz function 
\begin{equation*}
u\colon X\to\mathbb{R}^m.
\end{equation*}
Then for any $\pi\in\Gamma(\mu)$ we have
\begin{equation*}
\Big\lvert\int_{X}\langle u, d\mu\rangle\Big\rvert= \Big\lvert\int_{X\times X}\langle u(x)-u(y), d\pi(x,y)\rangle\Big\rvert\leq L \int_{X\times X}d(x,y)d\norm{\pi}(x,y).
\end{equation*}
Therefore if $\norm{\mu}_{\mathcal{W}_1(X,\mathbb{R}^m)}=0$, then
\begin{equation*}
\int_{X}\langle u, d\mu\rangle=0.
\end{equation*}
It follows by Lemma \ref{lem:lip}, that $\mu=0$.
Homogeneity of  $\norm{\cdot}_{\mathcal{W}_1(X,\mathbb{R}^m)}$ is clear. Let us show that the triangle inequality holds. 
For this choose measures $\mu,\nu\in \mathcal{W}_1(X,\mathbb{R}^m)$ and any measures $\pi\in \Gamma(\mu)$ and $\rho\in\Gamma(\nu)$. Then
\begin{equation*}
\mu+\nu=\mathrm{P}_1(\pi+\rho)-\mathrm{P}_2(\pi+\rho),
\end{equation*}
so that $\pi+\rho\in\Gamma(\mu+\nu)$.
It follows that 
\begin{equation*}
\begin{aligned}
\norm{\mu+\nu}_{\mathcal{W}_1(X,\mathbb{R}^m)}&\leq \int_{X\times X}d(x,y)d\norm{\pi+\rho}(x,y)\leq\\
&\leq \int_{X\times X}d(x,y)d\norm{\pi}(x,y)+ \int_{\mathbb{R}^n\times\mathbb{R}^n}d(x,y)d\norm{\rho}(x,y).
\end{aligned}
\end{equation*}
Taking infimum over all $\pi,\rho$ we see that the triangle inequality holds.
\end{proof}

\begin{proposition}\label{pro:density}
The linear space $\mathcal{F}$ of measures of the form
\begin{equation*}
\sum_{i=1}^n \delta_{x_i} v_i
\end{equation*}
for $x_i\in X$ and $v_i\in\mathbb{R}^m$, $i=1,\dotsc,n$, such that $\sum_{i=1}^n v_i=0$, is dense in $\mathcal{W}_1(X,\mathbb{R}^m)$.
\end{proposition}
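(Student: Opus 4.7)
The idea is to approximate an arbitrary $\mu\in\mathcal{W}_1(X,\mathbb{R}^m)$ by truncating $\mu$ to a large compact set, discretising the truncation by collapsing $\mu$ on small Borel pieces to Dirac masses, and using a single base-point correction to restore the zero-mass condition. The Wasserstein distance will then be controlled by an explicit transport plan.

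Fix $\epsilon>0$ and a base point $x_0\in X$. Using that $\|\mu\|$ is a finite Radon measure on the Radon space $X$ and that $\int_X d(x,x_0)\,d\|\mu\|(x)<\infty$, I would first choose a compact $K\subset X$ with
\begin{equation*}
\int_{X\setminus K} d(x,x_0)\,d\|\mu\|(x)<\epsilon.
\end{equation*}
This is standard: split the integrand at the threshold $\{d(\cdot,x_0)\le R\}$, apply inner regularity of $\|\mu\|$ on the bounded piece, and make the unbounded tail small by integrability. Next, by total boundedness of $K$, partition $K$ into finitely many disjoint Borel sets $A_1,\dotsc,A_n$ of diameter less than $\epsilon$, and select $x_i\in A_i$. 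Define
\begin{equation*}
\mu_{\epsilon}=\sum_{i=1}^n \mu(A_i)\,\delta_{x_i}-\mu(K)\,\delta_{x_0}.
\end{equation*}
Since $\sum_{i=1}^n\mu(A_i)=\mu(K)$, the measure $\mu_\epsilon$ has total mass zero, so $\mu_\epsilon\in\mathcal{F}$.

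To estimate $\|\mu-\mu_\epsilon\|_{\mathcal{W}_1(X,\mathbb{R}^m)}$, I would introduce the transport plan
\begin{equation*}
\pi=\sum_{i=1}^n \mu|_{A_i}\otimes\delta_{x_i}+\mu|_{X\setminus K}\otimes\delta_{x_0}
\end{equation*}
using the product construction of Definition~\ref{defin:prod}. A direct computation of marginals yields $\mathrm{P}_1\pi=\mu|_K+\mu|_{X\setminus K}=\mu$, and, using $\mu(X)=0$ so that $\mu(K)+\mu(X\setminus K)=0$,
\begin{equation*}
\mathrm{P}_2\pi=\sum_{i=1}^n\mu(A_i)\delta_{x_i}+\mu(X\setminus K)\delta_{x_0}=\mu_\epsilon,
\end{equation*}
hence $\pi\in\Gamma(\mu-\mu_\epsilon)$. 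Since $\|\sigma\otimes\delta_x\|=\|\sigma\|\otimes\delta_x$ for any vector measure $\sigma$ and any point $x$, the cost satisfies
\begin{equation*}
\int_{X\times X} d(x,y)\,d\|\pi\|(x,y)\le\sum_{i=1}^n\int_{A_i} d(x,x_i)\,d\|\mu\|(x)+\int_{X\setminus K} d(x,x_0)\,d\|\mu\|(x)\le\epsilon\,\|\mu\|(K)+\epsilon,
\end{equation*}
which tends to $0$ with $\epsilon$. This proves $\mu_\epsilon\to\mu$ in $\mathcal{W}_1(X,\mathbb{R}^m)$ and gives the density.

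The only genuinely nontrivial points, both modest, are the verification of the identity $\|\sigma\otimes\delta_x\|=\|\sigma\|\otimes\delta_x$ directly from the variational Definition~\ref{defin:vari}, and the selection of the compact truncation set $K$ so that the tail integral of the metric (and not merely the tail variation $\|\mu\|(X\setminus K)$) is small; the rest of the argument is just bookkeeping on marginals and product measures.
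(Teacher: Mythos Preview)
Your proof is correct and follows essentially the same approach as the paper: choose a compact $K$ making the tail integral small, partition $K$ into small-diameter Borel pieces, and collapse each piece (and the complement $K^c$) to a Dirac mass, obtaining the same approximating measure $\nu=\sum_{i}\mu(A_i)\delta_{x_i}+\mu(K^c)\delta_{x_0}$. The only cosmetic difference is that the paper estimates $\norm{\mu-\nu}_{\mathcal{W}_1}$ by the triangle inequality applied to the decomposition $\mu-\nu=\sum_i(\mu|_{A_i}-\mu(A_i)\delta_{x_i})$, bounding each summand with its own plan $\pi_i=\mu|_{A_i}\otimes\delta_{x_i}$, whereas you assemble these into a single plan $\pi=\sum_i\pi_i$ and bound the cost directly; the resulting estimate $\epsilon\,\norm{\mu}(K)+\epsilon$ is identical.
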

\begin{proof}
Choose any measure $\mu\in\mathcal{W}_1(X,\mathbb{R}^m)$. Choose any $\epsilon>0$. Choose any point $x_0\in X$ and a compact set $K$ such that
\begin{equation*}
\int_{K^c}d(x,x_0)d\norm{\mu}(x)\leq \epsilon.
\end{equation*}
Choose pairwise disjoint Borel sets $A_1,A_2,\dotsc,A_k \subset K$ such that the diameter of each is at most $\epsilon$ and
\begin{equation*}
K=\bigcup_{i=1}^kA_i.
\end{equation*}
Consider the restrictions $\mu_i=\mu|_{A_i}$ of the measure $\mu$ to the sets $A_i$, $i=1,2,\dotsc,k$. Choose any points $x_i\in A_i$. Then, as 
\begin{equation*}
\pi_i=\mu_i\otimes\delta_{x_i}\in\Gamma(\mu_i-\mu_i(X)\delta_{x_i}),
\end{equation*}
we have
\begin{equation*}
\norm{\mu_i- \mu_i(X) \delta_{x_i}}_{\mathcal{W}_1(X,\mathbb{R}^m)}\leq \int_{X}d(y,x_i)d\norm{\mu_i}(y)\leq \epsilon \norm{\mu}(A_i).
\end{equation*}
Let $\mu_0=\mu|_{K^c}$ and $A_0=K^c$. Then 
\begin{equation*}
\pi_0=\mu_0\otimes \delta_{x_0}\in\Gamma(\mu_0-\mu_0(X)\delta_{x_0}),
\end{equation*}
so
\begin{equation*}
\norm{\mu_0-\mu_0(X)\delta_{x_0}}_{\mathcal{W}_1(X,\mathbb{R}^m)}\leq \int_{X}d(x,x_0)d\norm{\mu_0}(x)\leq \epsilon.
\end{equation*}
Set 
\begin{equation*}
\nu=\sum_{i=0}^k\mu(A_i)\delta_{x_i}.
\end{equation*}
Then $\nu\in \mathcal{F}$. By triangle inequality
\begin{equation*}
\begin{aligned}
&\norm{\mu-\nu}_{\mathcal{W}_1(X,\mathbb{R}^m)}\leq \sum_{i=0}^k \norm{\mu_i-\mu_i(X)\delta_{x_i}}_{\mathcal{W}_1(X,\mathbb{R}^m)}\leq\\
&\leq \epsilon \sum_{i=1}^k\norm{\mu(A_i)}+\epsilon \leq \epsilon ( \norm{\mu}(X)+1).
\end{aligned}
\end{equation*}
This concludes the proof.
\end{proof}

\begin{corollary}
If $X$ is separable, then so is the Wasserstein space $\mathcal{W}_1(X,\mathbb{R}^m)$.
\end{corollary}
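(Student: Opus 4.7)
The plan is to combine Proposition \ref{pro:density} with separability of $X$ and of $\mathbb{R}^m$. Let $D\subset X$ be a countable dense set and let $\mathbb{Q}^m\subset\mathbb{R}^m$ be a countable dense subgroup. Define
\begin{equation*}
\mathcal{F}_0=\Big\{\sum_{i=1}^n\delta_{x_i}v_i\,\Big|\,n\in\mathbb{N},\,x_i\in D,\,v_i\in\mathbb{Q}^m,\,\sum_{i=1}^n v_i=0\Big\}.
\end{equation*}
This set is countable, so it will suffice to show that $\mathcal{F}_0$ is dense in $\mathcal{W}_1(X,\mathbb{R}^m)$.

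First I would record two elementary estimates that make the approximation transparent. For $x,x'\in X$ and $v\in\mathbb{R}^m$, using the coupling $\pi=v\otimes\delta_{(x,x')}$ (in the obvious sense of Definition \ref{defin:prod}) gives $\pi\in\Gamma(v\delta_x-v\delta_{x'})$ and hence
\begin{equation*}
\norm{v\delta_x-v\delta_{x'}}_{\mathcal{W}_1(X,\mathbb{R}^m)}\leq d(x,x')\norm{v}.
\end{equation*}
Secondly, whenever $\nu=\sum_{i=1}^n w_i\delta_{y_i}$ with $\sum_i w_i=0$, the coupling $\nu\otimes\delta_{x_0}$ as in the proof of Proposition \ref{pro:density} yields
\begin{equation*}
\norm{\nu}_{\mathcal{W}_1(X,\mathbb{R}^m)}\leq\sum_{i=1}^n\norm{w_i}\,d(y_i,x_0).
\end{equation*}

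Now, fix $\mu\in\mathcal{W}_1(X,\mathbb{R}^m)$ and $\varepsilon>0$, and fix a basepoint $x_0\in X$. By Proposition \ref{pro:density} choose $\nu=\sum_{i=1}^n v_i\delta_{x_i}$ with $\sum_i v_i=0$ and $\norm{\mu-\nu}_{\mathcal{W}_1(X,\mathbb{R}^m)}<\varepsilon/3$. Using density of $D$, pick $x_i'\in D$ so close to $x_i$ that the first estimate, summed over $i$, yields $\norm{\sum_i v_i\delta_{x_i}-\sum_i v_i\delta_{x_i'}}_{\mathcal{W}_1}<\varepsilon/3$. Finally, to approximate the coefficients while preserving the zero-sum condition, choose $v_i'\in\mathbb{Q}^m$ for $i=1,\dotsc,n-1$ very close to $v_i$ and set $v_n':=-\sum_{i<n}v_i'\in\mathbb{Q}^m$; then $v_n'-v_n=-\sum_{i<n}(v_i'-v_i)$, so all differences $\norm{v_i-v_i'}$ can be made simultaneously as small as we wish. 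Applying the second estimate to $w_i=v_i-v_i'$ and $y_i=x_i'$ gives $\norm{\sum_i v_i\delta_{x_i'}-\sum_i v_i'\delta_{x_i'}}_{\mathcal{W}_1}<\varepsilon/3$. Summing the three bounds shows that $\sum_i v_i'\delta_{x_i'}\in\mathcal{F}_0$ lies within $\varepsilon$ of $\mu$.

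The only subtle point—and the one needing the trick of fixing the last coefficient from the others—is enforcing the constraint $\sum v_i'=0$ while simultaneously rounding to $\mathbb{Q}^m$; everything else reduces to the two coupling estimates above and Proposition \ref{pro:density}.
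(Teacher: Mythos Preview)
Your proof is correct and follows essentially the same approach as the paper: reduce to finite atomic measures via Proposition \ref{pro:density}, then approximate the support points by a countable dense set in $X$ and the coefficients by a countable dense set in $\mathbb{R}^m$, using the same two couplings to bound the resulting $\mathcal{W}_1$-distances. If anything, you are slightly more careful than the paper: by taking the subgroup $\mathbb{Q}^m$ and defining $v_n'=-\sum_{i<n}v_i'$ you make explicit how the zero-sum constraint is preserved under approximation, whereas the paper simply asserts that approximants $\tilde{v}_i$ in a generic countable dense set can be chosen with $\sum_i\tilde{v}_i=0$.
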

\begin{proof}
Choose a countable dense subset $A\subset X$ and a countable dense set $B\subset\mathbb{R}^n$. Consider a measure $\mu$ given by
\begin{equation*}
\mu=\sum_{i=1}^n \delta_{x_i} v_i
\end{equation*}
for $x_i\in X$ and $v_i\in\mathbb{R}^n$, $i=1,\dotsc,n$, such that $\sum_{i=1}^n v_i=0$. Choose $\epsilon>0$ and $\tilde{x}_i\in A$ and $\tilde{v}_i\in B$, $i=1,\dotsc,n$, such that
\begin{equation*}
d(x_i,\tilde{x}_i)<\epsilon \text{ and } \norm{v_i-\tilde{v}_i}<\epsilon \text{ and } \sum_{i=1}^n\tilde{v}_i=0.
\end{equation*} 
Set 
\begin{equation*}
\tilde{\mu}=\sum_{i=1}^n \delta_{\tilde{x}_i} \tilde{v}_i.
\end{equation*}
Then
\begin{equation*}
\norm{\mu-\tilde{\mu}}_{\mathcal{W}_1(X,\mathbb{R}^m)}\leq \Big\lVert \sum_{i=1}^n \delta_{x_i}(v_i-\tilde{v}_i)\Big\rVert_{\mathcal{W}_1(X,\mathbb{R}^m)}+\Big\lVert \sum_{i=1}^n (\delta_{x_i}-\delta_{\tilde{x}_i})v_i\Big\rVert_{\mathcal{W}_1(X,\mathbb{R}^m)}  
\end{equation*}
Choose any $x_0\in X$. Taking 
\begin{equation*}
\pi=\sum_{i=1}^n \delta_{x_i} \otimes \delta_{x_0}(v_i-\tilde{v}_i)\text{ and }
\rho=\sum_{i=1}^n (\delta_{x_i}\otimes \delta_{\tilde{x}_i})v_i
\end{equation*}
we see that
\begin{equation*}
\Big\lVert \sum_{i=1}^n \delta_{x_i}(v_i-\tilde{v}_i)\Big\rVert_{\mathcal{W}_1(X,\mathbb{R}^m)}\leq \epsilon \sum_{i=1}^nd(x_i,x_0)
\end{equation*}
and
\begin{equation*}
\Big\lVert \sum_{i=1}^n (\delta_{x_i}-\delta_{\tilde{x}_i})v_i\Big\rVert_{\mathcal{W}_1(X,\mathbb{R}^m)}\leq \epsilon \sum_{i=1}^n\norm{v_i}.
\end{equation*}
The conclusion follows now from Proposition \ref{pro:density}.
\end{proof}

\begin{definition}
Choose any $x_0\in X$. Define
\begin{equation*}
\mathcal{L}(X,\mathbb{R}^m)=\{u\colon X\to\mathbb{R}^m|u \text{ is Lipschitz and } u(x_0)=0\},
\end{equation*}
i.e. the Banach space of $\mathbb{R}^m$-valued Lipschitz functions on $\mathbb{R}^n$ taking $0$ value at $x_0$,
with norm 
\begin{equation*}
\norm{u}_{\mathcal{L}(X,\mathbb{R}^m)}=\sup\bigg\{\frac{\norm{u(x)-u(y)}}{d(x,y)}\Big|x,y\in X, x\neq y\bigg\}.
\end{equation*}
\end{definition}

\begin{proposition}\label{pro:reciprocals}
Define 
\begin{equation*}
T\colon\mathcal{L}(X,\mathbb{R}^m)\to\mathcal{W}_1(X,\mathbb{R}^m)^*
\end{equation*}
and 
\begin{equation*}
S\colon\mathcal{W}_1(X,\mathbb{R}^m)^*\to \mathcal{L}(X,\mathbb{R}^m)
\end{equation*}
by 
\begin{equation}\label{eqn:isodual}
T(u)(\mu)=\int_{X}\langle u,d\mu\rangle
\end{equation}
and
\begin{equation}\label{eqn:dualiso}
\langle S(\lambda)(x),w\rangle=\lambda((\delta_x-\delta_{x_0})w),
\end{equation}
for any $w\in\mathbb{R}^m$. Then $S, T$ are mutual reciprocals and establish an isometric isomorphism of $\mathcal{L}(X,\mathbb{R}^m)$ and $\mathcal{W}_1(X,\mathbb{R}^m)^*$. 
\end{proposition}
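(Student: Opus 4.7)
The plan is to check that both $T$ and $S$ are well-defined bounded linear maps with operator norm at most one (with respect to the stated norms), then verify that they are mutual inverses; combined, these facts force both to be isometric isomorphisms.

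First I would show that $T$ is well-defined and that $\norm{T(u)} \leq \norm{u}_{\mathcal{L}(X,\mathbb{R}^m)}$. Given $u$ which is $L$-Lipschitz with $u(x_0)=0$, the bound $\norm{u(x)} \leq L\,d(x,x_0)$ together with the moment condition in the definition of $\mathcal{W}_1(X,\mathbb{R}^m)$ ensures $u$ is $\norm{\mu}$-integrable. For any transport $\pi\in\Gamma(\mu)$, Fubini gives
\begin{equation*}
\int_X \langle u,d\mu\rangle = \int_{X\times X}\langle u(x)-u(y),d\pi(x,y)\rangle,
\end{equation*}
and the right-hand side is bounded in absolute value by $L\int_{X\times X} d(x,y)\,d\norm{\pi}(x,y)$. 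Taking the infimum over $\pi\in\Gamma(\mu)$ yields $\abs{T(u)(\mu)}\leq L\norm{\mu}_{\mathcal{W}_1(X,\mathbb{R}^m)}$.

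Next I would show that $S$ is well-defined and that $\norm{S(\lambda)}_{\mathcal{L}(X,\mathbb{R}^m)} \leq \norm{\lambda}$. For each $x\in X$ the measure $(\delta_x-\delta_{x_0})w$ has total mass zero and finite first moment, so lies in $\mathcal{W}_1(X,\mathbb{R}^m)$; linearity in $w$ produces a unique vector $S(\lambda)(x)\in\mathbb{R}^m$ via (\ref{eqn:dualiso}), and $S(\lambda)(x_0)=0$ is immediate. For the Lipschitz estimate, compute $\langle S(\lambda)(x)-S(\lambda)(y),w\rangle = \lambda((\delta_x-\delta_y)w)$; using $\pi = (\delta_x\otimes\delta_y)w\in\Gamma((\delta_x-\delta_y)w)$ one gets $\norm{(\delta_x-\delta_y)w}_{\mathcal{W}_1(X,\mathbb{R}^m)}\leq d(x,y)\norm{w}$, which gives $\norm{S(\lambda)(x)-S(\lambda)(y)}\leq \norm{\lambda}\,d(x,y)$ by taking supremum over unit $w$.

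For mutual reciprocity, $S\circ T=\mathrm{id}$ is a direct substitution: $\langle S(T(u))(x),w\rangle = T(u)((\delta_x-\delta_{x_0})w) = \langle u(x)-u(x_0),w\rangle = \langle u(x),w\rangle$. The equality $T\circ S=\mathrm{id}$ first gets verified on finitely supported balanced measures $\mu=\sum_{i=1}^n \delta_{x_i}v_i$ with $\sum v_i=0$, rewriting $\mu=\sum_i(\delta_{x_i}-\delta_{x_0})v_i$ and telescoping to get
\begin{equation*}
\lambda(\mu)=\sum_i\langle S(\lambda)(x_i),v_i\rangle=\int_X\langle S(\lambda),d\mu\rangle = T(S(\lambda))(\mu).
\end{equation*}
Here the main (and only real) obstacle is extending from such $\mu$ to all of $\mathcal{W}_1(X,\mathbb{R}^m)$: both sides are continuous linear functionals on $\mathcal{W}_1(X,\mathbb{R}^m)$ (by the bounds established for $T$ and $S$), and the space $\mathcal{F}$ of such finitely supported balanced measures is dense by Proposition \ref{pro:density}, so the identity extends to all $\mu$. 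Finally, the two norm bounds combined with $S\circ T=\mathrm{id}$ and $T\circ S=\mathrm{id}$ upgrade both inequalities to equalities, proving the isometry assertion.
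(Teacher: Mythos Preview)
Your proof is correct and follows essentially the same approach as the paper: bound $T$ using an arbitrary $\pi\in\Gamma(\mu)$, bound $S$ via the elementary transport $(\delta_x\otimes\delta_y)w$, verify $S\circ T=\mathrm{id}$ by direct substitution, and verify $T\circ S=\mathrm{id}$ first on finitely supported balanced measures and then extend by the density result of Proposition~\ref{pro:density}. The only cosmetic difference is that the paper establishes $\norm{T(u)}=\norm{u}_{\mathcal{L}(X,\mathbb{R}^m)}$ directly by testing on the measures $\mu_{x,y,w}=\frac{\delta_x-\delta_y}{d(x,y)}w$, whereas you deduce the isometry a posteriori from the two contraction bounds together with mutual invertibility; both routes are equally short.
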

\begin{proof}
Choose any $\pi\in\Gamma(\mu)$. Then $\mathrm{P}_1\pi-\mathrm{P}_2\pi=\mu$. Thus, if $u$ is a Lipschitz map, then
\begin{equation*}
\bigg\lvert \int_X\langle u, d\mu \rangle \bigg\rvert=\bigg\lvert \int_X\langle u(x)-u(y), d\pi(x,y) \rangle \bigg\rvert\leq \norm{u}_{\mathcal{L}(X,\mathbb{R}^m)}\int_X d(x,y) d\norm{\pi}(x,y).
\end{equation*}
Taking infimum over all $\pi\in\Gamma(\mu)$, we see that 
\begin{equation*}
\bigg\lvert \int_X\langle u, d\mu \rangle \bigg\rvert\leq \norm{u}_{\mathcal{L}(X,\mathbb{R}^m)}\norm{\mu}_{\mathcal{W}_1(X,\mathbb{R}^m)}.
\end{equation*}
The above calculation shows that the formula (\ref{eqn:isodual}) defines a continuous functional of norm at most $\norm{u}_{\mathcal{L}(X,\mathbb{R}^m)}$. If $w\in\mathbb{R}^m$ if of norm $1$ and $x,y\in X$, $x\neq y$, then for 
\begin{equation}\label{eqn:mu}
\mu_{x,y,w}=\frac{\delta_x-\delta_y}{d(x,y)}w
\end{equation}
we have $\norm{\mu_{x,y,w}}_{\mathcal{W}_1(X,\mathbb{R}^m)}\leq 1$ and
\begin{equation*}
\int_{\mathbb{R}^n}\langle u,d\mu_{x,y,w}\rangle=\frac{\langle w, u(x)-u(y)\rangle}{d(x,y)}.
\end{equation*}
Thus
\begin{equation*}
\norm{u}_{\mathcal{L}(X,\mathbb{R}^m)}=\norm{T(u)}.
\end{equation*}
We shall now show that $T\circ S=\mathrm{Id}$. Take any functional $\lambda\in\mathcal{W}_1(X,\mathbb{R}^m)^*$. Set 
\begin{equation*}
\sigma_{x,w}=(\delta_x-\delta_{x_0}) w.
\end{equation*}
Then  $S(\lambda)\colon X\to\mathbb{R}^m$ is defined by the formula
\begin{equation*}
\langle S(\lambda)(x),w\rangle=\lambda(\sigma_{x,w}).
\end{equation*}
It is clear that the above formula defines $S(\lambda)$ uniquely. Then we claim that map $v=S(\lambda)$ is $\norm{\lambda}$-Lipschitz. Indeed
\begin{equation*}
\norm{v(x)-v(y)}=\sup\{\langle v(x)-v(y),w\rangle | w\in\mathbb{R}^m, \norm{w}=1\},
\end{equation*}
and as
\begin{equation*}
\langle v(x)-v(y),w\rangle = \lambda (\sigma_{x,w}-\sigma_{y,w})\leq \norm{\lambda}\norm{\sigma_{x,w}-\sigma_{y,w}}_{\mathcal{W}_1(X,\mathbb{R}^m)}
\end{equation*}
we see that 
\begin{equation*}
\norm{v(x)-v(y)}\leq\norm{\lambda}d(x,y), \text{ since } \norm{\sigma_{x,w}-\sigma_{y,w}}_{\mathcal{W}_1(X,\mathbb{R}^m)}\leq d(x,y).
\end{equation*}
Suppose that $\nu=(\delta_x-\delta_y)z$. We compute
\begin{equation*}
T(v)(\nu)=\int_X\langle v,d\nu\rangle=\int_X\langle v,z\rangle  d(\delta_x-\delta_y)= \lambda (\sigma_{x,z}-\sigma_{y,z})=\lambda(\nu).
\end{equation*}
We see that $T(S(\lambda))$ and $\lambda$ are equal on the set spanned by $(\delta_x-\delta_y)z$, where $x,y\in X$, $z\in\mathbb{R}^m$. By Proposition \ref{pro:density}, we see that $T(S(\lambda))$ and $\lambda$ are equal on $\mathcal{W}_1(X,\mathbb{R}^m)$. 

Let us show also that $S\circ T=\mathrm{Id}$. Choose any $w\in\mathbb{R}^m$ and any map $u\in\mathcal{L}(X,\mathbb{R}^m)$. Then
\begin{equation*}
\langle S(T(u)),w\rangle = T(u)((\delta_x-\delta_{x_0})w)=\int_X\langle u, d(\delta_x-\delta_{x_0})w\rangle=\langle u(x),w\rangle,
\end{equation*}
as $u(x_0)=0$. Therefore $S(T(u))=u$.
\end{proof}

\begin{proposition}\label{pro:lip}
For any $\mu\in \mathcal{W}_1(X,\mathbb{R}^m)$
\begin{equation}\label{eqn:Kantorovich-Rubinsetin}
\sup\bigg\{\int_X\langle u, d\mu \rangle | u\colon X\to\mathbb{R}^m \text{ is } 1\text{-Lipschitz}\bigg \}=\norm{\mu}_{\mathcal{W}_1(X,\mathbb{R}^m)}.
\end{equation}
Moreover, there exists $1$-Lipschitz function $u_0$ such that
\begin{equation}\label{eqn:maxi}
\sup\bigg\{\int_X\langle u, d\mu \rangle | u\colon X\to\mathbb{R}^m \text{ is } 1\text{-Lipschitz}\bigg \}=\int_X\langle u_0, d\mu\rangle.
\end{equation}
\end{proposition}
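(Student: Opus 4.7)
The plan is to derive both assertions from the isometric isomorphism $T\colon \mathcal{L}(X,\mathbb{R}^m)\to \mathcal{W}_1(X,\mathbb{R}^m)^*$ constructed in Proposition~\ref{pro:reciprocals}, combined with an application of the Hahn-Banach theorem. A small bookkeeping point to handle throughout is that the supremum in \eqref{eqn:Kantorovich-Rubinsetin}--\eqref{eqn:maxi} ranges over all $1$-Lipschitz maps $u\colon X\to\mathbb{R}^m$ rather than over the unit ball of $\mathcal{L}(X,\mathbb{R}^m)$; however, since $\mu(X)=0$, translating $u$ by the constant $u(x_0)$ neither affects the $1$-Lipschitz property nor the value of $\int_X\langle u,d\mu\rangle$, so the two suprema coincide.

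First I would dispatch the inequality $\leq$ in \eqref{eqn:Kantorovich-Rubinsetin}. For any $1$-Lipschitz $u$ and any $\pi\in\Gamma(\mu)$, using $\mu=\mathrm{P}_1\pi-\mathrm{P}_2\pi$ and the standard bound $\lvert\int\langle f,d\pi\rangle\rvert\leq \int\norm{f}\,d\norm{\pi}$ for vector-measure integrals,
\begin{equation*}
\int_X\langle u,d\mu\rangle=\int_{X\times X}\langle u(x)-u(y),d\pi(x,y)\rangle\leq\int_{X\times X}\norm{u(x)-u(y)}\,d\norm{\pi}(x,y)\leq\int_{X\times X}d(x,y)\,d\norm{\pi}(x,y).
\end{equation*}
Taking the infimum over $\pi\in\Gamma(\mu)$ and then the supremum over $u$ yields the desired inequality.

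For the reverse inequality together with the attainment in \eqref{eqn:maxi}, I invoke the Hahn-Banach theorem in $\mathcal{W}_1(X,\mathbb{R}^m)$: there exists $\lambda\in\mathcal{W}_1(X,\mathbb{R}^m)^*$ with $\norm{\lambda}\leq 1$ and $\lambda(\mu)=\norm{\mu}_{\mathcal{W}_1(X,\mathbb{R}^m)}$ (the case $\mu=0$ being trivial via $u_0=0$). By Proposition~\ref{pro:reciprocals}, there is a unique $u_0\in\mathcal{L}(X,\mathbb{R}^m)$ with $T(u_0)=\lambda$, and since $T$ is an isometry, $\norm{u_0}_{\mathcal{L}(X,\mathbb{R}^m)}=\norm{\lambda}\leq 1$, so $u_0$ is $1$-Lipschitz. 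Then
\begin{equation*}
\int_X\langle u_0,d\mu\rangle=T(u_0)(\mu)=\lambda(\mu)=\norm{\mu}_{\mathcal{W}_1(X,\mathbb{R}^m)},
\end{equation*}
which simultaneously establishes the matching lower bound in \eqref{eqn:Kantorovich-Rubinsetin} and produces the desired maximizer in \eqref{eqn:maxi}.

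I do not anticipate a substantial obstacle: the essential content is already captured by the isometric identification in Proposition~\ref{pro:reciprocals}, and the only delicate point is the trivial constant-shift reconciling arbitrary $1$-Lipschitz maps with the pointed Banach space $\mathcal{L}(X,\mathbb{R}^m)$. A conceptually equivalent alternative to the Hahn-Banach step would be to note that the unit ball of $\mathcal{L}(X,\mathbb{R}^m)\cong\mathcal{W}_1(X,\mathbb{R}^m)^*$ is weak$^*$-compact by Banach-Alaoglu and that $u\mapsto T(u)(\mu)$ is weak$^*$-continuous by definition, so the supremum is attained; either argument dispatches both assertions at once.
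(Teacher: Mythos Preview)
Your proof is correct and follows essentially the same approach as the paper: the inequality $\leq$ is the easy direction, and the reverse inequality together with attainment comes from applying the Hahn--Banach theorem and then invoking the isometric identification of Proposition~\ref{pro:reciprocals}. You are in fact slightly more careful than the paper in explicitly reconciling the supremum over all $1$-Lipschitz maps with the unit ball of the pointed space $\mathcal{L}(X,\mathbb{R}^m)$ via the constant shift, a point the paper leaves implicit.
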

\begin{proof}
Notice first that the left-hand side of (\ref{eqn:Kantorovich-Rubinsetin}) is clearly at most the right-hand side of (\ref{eqn:Kantorovich-Rubinsetin}). Take any $\mu\in\mathcal{W}_1(X,\mathbb{R}^m)$. Then by the Hahn-Banach theorem there exists a continuous linear functional $\lambda$ of norm $1$ such that 
\begin{equation*}
\lambda(\mu)=\norm{\mu}_{\mathcal{W}_1(X,\mathbb{R}^m)}.
\end{equation*}
By Proposition \ref{pro:reciprocals}, we know that $\lambda$ is of the form
\begin{equation*}
\lambda(\mu)=\int_X\langle u_0,d\mu\rangle
\end{equation*}
for some Lipschitz map $u_0$. The Lipschitz constant of $u_0$ is equal to one, as
\begin{equation*}\norm{u_0}_{\mathcal{L}(X,\mathbb{R}^m)}=\norm{\lambda}=1.
\end{equation*}
This completes the proof.
\end{proof}

\begin{definition}
Any $1$-Lipschitz function $u\colon X\to\mathbb{R}^m$ such that (\ref{eqn:maxi}) holds we shall call an \emph{optimal potential} of measure $\mu$.
\end{definition}

\begin{definition}
A measure $\pi\in \Gamma(\mu)$ such that
\begin{equation*}
\norm{\mu}_{\mathcal{W}_1(X,\mathbb{R}^m)}=\int_{X\times X}d(x,y)d\norm{\pi}(x,y)
\end{equation*}
we shall call an \emph{optimal transport} for $\mu$.
\end{definition}

\begin{theorem}\label{thm:ae}
Let $\mu$ be a Borel measure such that $\mu(X)=0$. Let $u\in\mathcal{L}(X,\mathbb{R}^m) $ be a $1$-Lipschitz map. Let $\pi\in\Gamma(\mu)$. The following conditions are equivalent:
\begin{enumerate}[i)]
\item\label{i:opti}
\begin{equation*}
\int_X \langle u,d\mu\rangle=\int_{X\times X}d(x,y)d\norm{\pi}(x,y)=\norm{\mu}_{\mathcal{W}_1(X,\mathbb{R}^m)} ,
\end{equation*}
\item\label{i:borel}
\begin{equation*}
\int_A \langle u(x)-u(y),d\pi(x,y) \rangle=\int_A d(x,y)d\norm{\pi}(x,y)
\end{equation*}
for any Borel set $A\subset X\times X$,
\item \label{i:eren}
\begin{equation*}
\int_X \langle u,d\mu\rangle=\int_{X\times X} d(x,y)d\norm{\pi}(x,y),
\end{equation*}
\item\label{i:optopt}
$u$ is an optimal potential for $\mu$ and $\pi$ is an optimal transport for $\mu$.
\end{enumerate}
Moreover, if the above conditions hold, then
\begin{equation*}
\norm{u(x)-u(y)}=d(x,y)
\end{equation*}
$\norm{\pi}$-almost everywhere.
\end{theorem}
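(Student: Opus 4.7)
The proof plan rests on a single chain of inequalities that links everything together. Observe first that since $\mu = \mathrm{P}_1\pi - \mathrm{P}_2\pi$, for any Lipschitz $u$ we have
\begin{equation*}
\int_X \langle u, d\mu\rangle = \int_{X\times X}\langle u(x)-u(y), d\pi(x,y)\rangle.
\end{equation*}
Writing $\pi = \phi\, d\norm{\pi}$ where $\phi \colon X\times X\to\mathbb{R}^m$ satisfies $\norm{\phi} = 1$ pointwise (the Radon--Nikodym density from the polar decomposition of a vector measure), and using that $u$ is $1$-Lipschitz, we obtain the pointwise bound
\begin{equation*}
\langle u(x)-u(y), \phi(x,y)\rangle \leq \norm{u(x)-u(y)}\,\norm{\phi(x,y)} \leq d(x,y)
\end{equation*}
$\norm{\pi}$-almost everywhere. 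Integrating gives $\int_X\langle u,d\mu\rangle \leq \int_{X\times X} d(x,y)\,d\norm{\pi}(x,y)$, and Proposition \ref{pro:lip} sandwiches $\norm{\mu}_{\mathcal{W}_1(X,\mathbb{R}^m)}$ between these two quantities.

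With this in hand, the equivalences fall out quickly. The implication (\ref{i:opti})$\Rightarrow$(\ref{i:eren}) is trivial, and (\ref{i:eren})$\Rightarrow$(\ref{i:opti}) uses the chain above: if the extremes are equal, then the norm $\norm{\mu}_{\mathcal{W}_1(X,\mathbb{R}^m)}$ sandwiched between them must also equal both, which is exactly (\ref{i:opti}). The equivalence (\ref{i:opti})$\Leftrightarrow$(\ref{i:optopt}) is then just unwinding definitions of optimal potential and optimal transport together with the variational formula of Proposition \ref{pro:lip}.

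The more informative direction is (\ref{i:eren})$\Rightarrow$(\ref{i:borel}). Under (\ref{i:eren}), the nonnegative integrable function $d(x,y)-\langle u(x)-u(y),\phi(x,y)\rangle$ has integral zero against $\norm{\pi}$, hence vanishes $\norm{\pi}$-almost everywhere; integrating over any Borel $A\subset X\times X$ yields (\ref{i:borel}). Conversely (\ref{i:borel})$\Rightarrow$(\ref{i:eren}) follows by taking $A = X\times X$ and combining with the opening identity. The only thing to be careful about is justifying that $\int_A\langle u(x)-u(y),d\pi(x,y)\rangle = \int_A\langle u(x)-u(y),\phi(x,y)\rangle\,d\norm{\pi}$, which is a standard consequence of the polar decomposition of vector measures proved in \cite{RudinRC}.

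For the concluding almost-everywhere statement, the pointwise identity $\langle u(x)-u(y),\phi(x,y)\rangle = d(x,y)$ $\norm{\pi}$-a.e.\ (which we already extracted above) forces equality in each of the two Cauchy--Schwarz/Lipschitz steps. Since $\norm{\phi} = 1$, equality $\norm{u(x)-u(y)}\norm{\phi(x,y)} = d(x,y)$ gives $\norm{u(x)-u(y)} = d(x,y)$ $\norm{\pi}$-a.e. The main obstacle, such as it is, is just being careful with the polar decomposition for vector measures so that the pointwise argument with $\phi$ is rigorous; once that is in place the rest is a straightforward pursuit of the sandwich.
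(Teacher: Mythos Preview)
Your proof is correct and follows essentially the same approach as the paper: both rest on the marginal identity $\int_X\langle u,d\mu\rangle=\int_{X\times X}\langle u(x)-u(y),d\pi\rangle$, the chain of inequalities sandwiching $\norm{\mu}_{\mathcal{W}_1}$, and the observation that equality over the whole space forces equality over every Borel set. Your explicit use of the polar decomposition $\pi=\phi\,d\norm{\pi}$ makes the pointwise a.e.\ steps (and in particular the concluding $\norm{u(x)-u(y)}=d(x,y)$) more transparent than the paper's terse ``follows readily from \ref{i:borel})'', but this is a presentational rather than a substantive difference.
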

\begin{proof}
Assume that \ref{i:eren}) holds. Observe that
\begin{equation*}
\int_X\langle u, d\mu\rangle=\int_{X\times X} \langle u(x)-u(y),d\pi(x,y) \rangle.
\end{equation*}
As
\begin{equation*}
\int_X\langle u, d\mu\rangle\leq \norm{\mu}_{\mathcal{W}_1(X,\mathbb{R}^m)}\leq \int_{X\times X}d(x,y)d\norm{\pi}(x,y),
\end{equation*}
then by \ref{i:eren}) we see that in the above inequalities we have equalities.
Suppose that \ref{i:opti}) holds. Clearly
\begin{equation*}
\int_A \langle u(x)-u(y),d\pi(x,y) \rangle\leq\int_A d(x,y)d\norm{\pi}(x,y).
\end{equation*}
If we had strict inequality in \ref{i:borel}) for some Borel set $A\subset X\times X$, then the above computations shows that we would get strict inequality in \ref{i:opti}).
Condition \ref{i:optopt}) is reformulation of \ref{i:opti}).
The last part of the theorem follows readily from \ref{i:borel}).
\end{proof}

\begin{definition}
We say that a Borel set $A\subset\mathbb{R}^n$ is a \emph{transport set} associated with $u$ if it is a Borel set enjoying the following property: if $x\in A\setminus B(u)$ and $y\in\mathbb{R}^n$ is such that 
\begin{equation*}
\norm{u(x)-u(y)}=\norm{x-y},
\end{equation*}
then $y\in A$.
\end{definition}
We say that a measure $\mu\in\mathcal{M}(Z,\mathbb{R}^m)$ is \emph{concentrated} on a subset $X\subset Z$ if there is $\norm{\mu}(Z\setminus X)=0$.

\begin{lemma}\label{lem:concent}
Let $\mu\in \mathcal{W}_1(\mathbb{R}^n,\mathbb{R}^m)$ be concentrated on a set $X\subset\mathbb{R}^n$. Then
\begin{equation*}
\norm{\mu}_{\mathcal{W}_1(\mathbb{R}^n,\mathbb{R}^m)}=\norm{\mu}_{\mathcal{W}_1(X,\mathbb{R}^m)}.
\end{equation*}
\end{lemma}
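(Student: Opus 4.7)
The plan is to establish the two inequalities separately, using direct transport constructions for one direction and Kantorovich-Rubinstein duality for the other.

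First, for the bound $\norm{\mu}_{\mathcal{W}_1(\mathbb{R}^n,\mathbb{R}^m)} \leq \norm{\mu}_{\mathcal{W}_1(X,\mathbb{R}^m)}$, I would take any admissible transport $\pi \in \Gamma(\mu)$ on $X\times X$ and push it forward along the inclusion $\iota\times\iota \colon X\times X \hookrightarrow \mathbb{R}^n\times\mathbb{R}^n$ to obtain a vector measure $\tilde{\pi}$ on $\mathbb{R}^n \times \mathbb{R}^n$. Because $\mu$ is concentrated on $X$, one checks directly that $\mathrm{P}_1\tilde{\pi} - \mathrm{P}_2\tilde{\pi} = \mu$ as measures on $\mathbb{R}^n$, so $\tilde{\pi} \in \Gamma(\mu)$ on $\mathbb{R}^n$. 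Since $\iota\times\iota$ is an injective isometric embedding, $\norm{\tilde{\pi}}$ coincides with the pushforward of $\norm{\pi}$, and therefore
\begin{equation*}
\int_{\mathbb{R}^n\times\mathbb{R}^n} d(x,y)\,d\norm{\tilde{\pi}}(x,y) = \int_{X\times X} d(x,y)\,d\norm{\pi}(x,y).
\end{equation*}
Taking the infimum over admissible $\pi$ then yields the inequality.

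For the reverse inequality $\norm{\mu}_{\mathcal{W}_1(X,\mathbb{R}^m)} \leq \norm{\mu}_{\mathcal{W}_1(\mathbb{R}^n,\mathbb{R}^m)}$, I would invoke the duality of Proposition \ref{pro:lip}, which gives
\begin{equation*}
\norm{\mu}_{\mathcal{W}_1(X,\mathbb{R}^m)} = \sup\Big\{\int_X \langle u, d\mu\rangle \,\Big|\, u\colon X\to\mathbb{R}^m \text{ is }1\text{-Lipschitz}\Big\}.
\end{equation*}
Given any $1$-Lipschitz $u\colon X \to\mathbb{R}^m$, Kirszbraun's theorem produces a $1$-Lipschitz extension $\tilde{u}\colon \mathbb{R}^n\to\mathbb{R}^m$, since both the domain and target are Hilbert spaces. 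Because $\mu$ is concentrated on $X$ and $\tilde{u}$ agrees with $u$ there,
\begin{equation*}
\int_X \langle u, d\mu\rangle = \int_{\mathbb{R}^n} \langle \tilde{u}, d\mu\rangle \leq \norm{\mu}_{\mathcal{W}_1(\mathbb{R}^n,\mathbb{R}^m)},
\end{equation*}
the last bound being Proposition \ref{pro:lip} applied on $\mathbb{R}^n$. Taking the supremum over $u$ completes this direction.

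The main technical obstacle is the measure-theoretic bookkeeping in the first step: verifying that $\norm{(\iota\times\iota)_*\pi} = (\iota\times\iota)_*\norm{\pi}$ and that the marginal identity $\mathrm{P}_1\tilde{\pi}-\mathrm{P}_2\tilde{\pi} = \mu$ persists after pushforward. The former holds because $\iota\times\iota$ is injective, so a countable partition of $A\subset\mathbb{R}^n\times\mathbb{R}^n$ pulls back to a countable partition of $(\iota\times\iota)^{-1}(A)\subset X\times X$ and vice versa, matching the two supremum definitions in \eqref{eqn:vari}. The latter reduces to the identity $\mu(A) = \mu(A\cap X)$, valid for every Borel $A\subset\mathbb{R}^n$ because $\mu$ is concentrated on $X$.
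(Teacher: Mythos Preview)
Your proof is correct. The second inequality (via Kirszbraun and Proposition~\ref{pro:lip}) is exactly what the paper does. For the first inequality, however, the paper stays entirely on the dual side: since any $1$-Lipschitz $u\colon\mathbb{R}^n\to\mathbb{R}^m$ restricts to a $1$-Lipschitz map on $X$ and $\mu$ is concentrated on $X$, the supremum over $\mathbb{R}^n$-Lipschitz maps is immediately bounded by the supremum over $X$-Lipschitz maps. This avoids the primal pushforward construction and the measure-theoretic bookkeeping you identify (checking $\norm{(\iota\times\iota)_*\pi}=(\iota\times\iota)_*\norm{\pi}$ and the marginal identity), making the argument a couple of lines. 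Your route is perfectly valid but slightly heavier; the paper's purely dual argument is more economical here.
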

\begin{proof}
The assertion is that
\begin{equation*}
\sup\Big\{\int_{\mathbb{R}^n}\langle u,d\mu\rangle\big| u\colon\mathbb{R}^n\to\mathbb{R}^m\text{ is }1\text{-Lipschitz}\Big\}
\end{equation*}
is equal to
\begin{equation*}
\sup\Big\{\int_X\langle u,d\mu\rangle\big| u\colon X\to\mathbb{R}^m\text{ is }1\text{-Lipschitz}\Big\}.
\end{equation*}
By the Kirszbraun theorem any $1$-Lipschitz function $u\colon X\to\mathbb{R}^m$ extends to a $1$-Lipschitz function $\tilde{u}\colon\mathbb{R}^n\to\mathbb{R}^m$. Clearly, for any such extension
\begin{equation*}
\int_{\mathbb{R}^n}\langle \tilde{u},d\mu\rangle=\int_X\langle u,d\mu\rangle.
\end{equation*}
The assertion follows.
\end{proof}

Suppose that $\mu\in\mathcal{W}_1(\mathbb{R}^n,\mathbb{R}^m)$ is absolutely continuous with respect to the Lebesgue measure.
The following theorem shows that if there exists an optimal transport for $\mu$ such that its total variation has absolutely continuous marginals, then the conjecture of Klartag holds true. Note that such existence is clear for $m=1$.

\begin{theorem}\label{thm:balance}
Suppose that $\mu\in\mathcal{W}_1(\mathbb{R}^n,\mathbb{R}^m)$ is absolutely continuous with respect to the Lebesgue measure on $\mathbb{R}^n$. Let $u$ be an optimal potential for $\mu$. Then each of the following conditions implies the subsequent one:
\begin{enumerate}[i)]
\item\label{i:continuity}
there exists an optimal transport $\pi$ of $\mu$ such that
\begin{equation}\label{eqn:abs}
\mathrm{P}_1\norm{\pi}\ \text{ is absolutely continuous with respect to }\norm{\mu},
\end{equation}
\item\label{i:desinteg}
for any transport set $A$ associated with $u$: 
\begin{enumerate}[a)]
\item\label{i:transport}
$\pi|_{A\times A}\in \Gamma(\mu|_A)$ is an optimal transport of $\mu|_A$; in particular $\mu(A)=0$,
\item 
$u$ is an optimal potential of $\mu|_A$.
\end{enumerate} 
\item\label{i:dezintegracja}
if $\{\norm{\mu}_{\mathcal{S}}| \mathcal{S}\in CC(\mathbb{R}^n)\}$ is a disintegration of $\norm{\mu}$ with respect to $\mathcal{S}\colon\mathbb{R}^n\to CC(\mathbb{R}^n)$, then for $\nu$-almost every $\mathcal{S}\in CC(\mathbb{R}^n)$ we have
\begin{equation*}
\int_{\mathbb{R}^n}\frac{d\mu}{d\norm{\mu}}d\norm{\mu}_{\mathcal{S}}=0
\end{equation*}
and $u$ is an optimal potential of $\frac{d\mu}{d\norm{\mu}}d\norm{\mu}_{\mathcal{S}}$.
\end{enumerate}
\end{theorem}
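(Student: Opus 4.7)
The plan is to chain the implications i) $\Rightarrow$ ii) $\Rightarrow$ iii).

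For i) $\Rightarrow$ ii), the key tool is Theorem \ref{thm:ae}. Optimality of $\pi$ forces $\norm{u(x)-u(y)} = \norm{x-y}$ for $\norm{\pi}$-a.e.\ $(x,y)$, together with the Borel-set identity $\int_B \langle u(x)-u(y),d\pi\rangle = \int_B d(x,y)\,d\norm{\pi}$. Combining $\mathrm{P}_1\norm{\pi} \ll \norm{\mu} \ll \lambda$ with Corollary \ref{col:unique} gives $\norm{\pi}(B(u)\times\mathbb{R}^n) = 0$; hence for $\norm{\pi}$-a.e.\ $(x,y)$ the point $x$ lies in a unique leaf and $y\in\mathcal{S}(x)$. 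For any transport set $A$, the defining property of $A$ now forces $\norm{\pi}(A\times A^c)=\norm{\pi}(A^c\times A)=0$, so $\pi$ is concentrated on $(A\times A)\cup(A^c\times A^c)$. This yields $\mathrm{P}_1(\pi|_{A\times A})=\mathrm{P}_1\pi|_A$ and similarly for $\mathrm{P}_2$, whence $\pi|_{A\times A}\in\Gamma(\mu|_A)$ and in particular $\mu(A)=0$. Taking $B=A\times A$ in Theorem \ref{thm:ae} and sandwiching $\int_A\langle u,d\mu\rangle \leq \norm{\mu|_A}_{\mathcal{W}_1(\mathbb{R}^n,\mathbb{R}^m)} \leq \int_{A\times A} d(x,y)\,d\norm{\pi|_{A\times A}}$ between the two equalities it provides establishes both ii)a) and ii)b).

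For ii) $\Rightarrow$ iii), I would first show that for any Borel $C\subset CC(\mathbb{R}^n)$ the set $A_C := \mathcal{S}^{-1}(C)$ is a Borel transport set: Borel by the measurability of $\mathcal{S}$ proved in Section \ref{sec:measur}, and a transport set because any $x\in A_C\setminus B(u)$ has a unique leaf $\mathcal{S}(x)\in C$, itself contained in $A_C$. Applying ii)a) then gives $0 = \mu(A_C) = \int_C \mu_{\mathcal{S}}(\mathbb{R}^n)\,d\nu(\mathcal{S})$ for every Borel $C$, whence $\mu_{\mathcal{S}}(\mathbb{R}^n)=0$ for $\nu$-a.e.\ $\mathcal{S}$.

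The optimality of $u$ on each $\mu_{\mathcal{S}}$ requires disintegrating the vector measure $\pi$ itself along $\Phi(x,y):=\mathcal{S}(x)$. Hypothesis i) gives $\Phi_*\norm{\pi}\ll\nu$, so the disintegration reads $\norm{\pi}=\int\psi_{\mathcal{S}}\,d\nu(\mathcal{S})$ with $\psi_{\mathcal{S}}$ concentrated on $\mathcal{S}\times\mathcal{S}$ by the first paragraph. Writing $d\pi=\phi\,d\norm{\pi}$ by Radon--Nikodym and putting $d\tilde\pi_{\mathcal{S}}:=\phi\,d\psi_{\mathcal{S}}$ yields $\pi=\int\tilde\pi_{\mathcal{S}}\,d\nu$ and $\norm{\pi}=\int\norm{\tilde\pi_{\mathcal{S}}}\,d\nu$; essential uniqueness of disintegration applied to $\mu=\mathrm{P}_1\pi-\mathrm{P}_2\pi$ then gives $\tilde\pi_{\mathcal{S}}\in\Gamma(\mu_{\mathcal{S}})$ for $\nu$-a.e. $\mathcal{S}$. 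Assembling the identities $\int_C\int\langle u,d\mu_{\mathcal{S}}\rangle\,d\nu = \int\langle u,d\mu|_{A_C}\rangle = \norm{\mu|_{A_C}}_{\mathcal{W}_1} = \int_{A_C\times A_C}d(x,y)\,d\norm{\pi} = \int_C\int d\,d\norm{\tilde\pi_{\mathcal{S}}}\,d\nu$ with the pointwise inequalities $\int d\,d\norm{\tilde\pi_{\mathcal{S}}} \geq \norm{\mu_{\mathcal{S}}}_{\mathcal{W}_1} \geq \int\langle u,d\mu_{\mathcal{S}}\rangle$ (from $\tilde\pi_{\mathcal{S}}\in\Gamma(\mu_{\mathcal{S}})$ and $1$-Lipschitzness of $u$) collapses the chain to equalities for every Borel $C$, forcing $\norm{\mu_{\mathcal{S}}}_{\mathcal{W}_1} = \int\langle u,d\mu_{\mathcal{S}}\rangle$ for $\nu$-a.e.\ $\mathcal{S}$. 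The main obstacle will be executing the vector-measure disintegration rigorously: verifying $\nu$-measurability of $\mathcal{S}\mapsto\norm{\mu_{\mathcal{S}}}_{\mathcal{W}_1}$ by expressing it via Proposition \ref{pro:lip} as a supremum over a countable collection of $1$-Lipschitz test functions, constructing $\tilde\pi_{\mathcal{S}}$ canonically via the Radon--Nikodym decomposition of $\pi$ against $\norm{\pi}$, and invoking essential uniqueness in the vector-valued setting to identify $\mathrm{P}_1\tilde\pi_{\mathcal{S}}-\mathrm{P}_2\tilde\pi_{\mathcal{S}}$ with $\mu_{\mathcal{S}}$.
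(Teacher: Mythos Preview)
Your argument for i) $\Rightarrow$ ii) is essentially identical to the paper's: both combine Theorem~\ref{thm:ae} (to get $\norm{\pi}$ concentrated on the isometry set $I$) with $\mathrm{P}_1\norm{\pi}\ll\norm{\mu}\ll\lambda$ and Corollary~\ref{col:unique} (to kill $B(u)\times\mathbb{R}^n$), and then use the defining property of a transport set to conclude that, $\norm{\pi}$-almost everywhere, $x\in A$ if and only if $y\in A$. The paper phrases this last step as ``$\pi$ is concentrated on $C=I\cap(B(u)^c\times\mathbb{R}^n)$ and on $C$ one has $x\in A\Leftrightarrow y\in A$''; you phrase it as $\norm{\pi}(A\times A^c)=\norm{\pi}(A^c\times A)=0$. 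These are the same observation, and the verification that $\pi|_{A\times A}\in\Gamma(\mu|_A)$ together with optimality via Theorem~\ref{thm:ae} then proceeds identically.

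For ii) $\Rightarrow$ iii) the paper writes only ``Condition~\ref{i:dezintegracja}) follows from~\ref{i:desinteg}) readily'', so your expansion is genuinely adding content rather than diverging from it. Your two-step plan is the natural way to unpack that sentence: the mass-balance part is indeed immediate once one checks that $A_C=\mathcal{S}^{-1}(C)$ is a transport set (which is exactly what you do), while the optimality-of-$u$ part does require real work and your disintegration of $\pi$ along $\Phi(x,y)=\mathcal{S}(x)$, followed by the sandwich $\int d\,d\norm{\tilde\pi_{\mathcal{S}}}\geq\norm{\mu_{\mathcal{S}}}_{\mathcal{W}_1}\geq\int\langle u,d\mu_{\mathcal{S}}\rangle$, is a clean way to close it. One small remark: when you write ``Hypothesis~i) gives $\Phi_*\norm{\pi}\ll\nu$'' you are, strictly speaking, reaching back to the absolute-continuity assumption in~i) rather than using only~ii). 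This is harmless because the statement of~ii) already carries $\pi$ with it (so the implications are cumulative), but it is worth making explicit. The measurability and essential-uniqueness issues you flag at the end are exactly the right technical points to verify; none of them presents a genuine obstruction.
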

\begin{proof}
By Corollary \ref{col:unique} it follows that
\begin{equation*}
\norm{\mu}(B(u))=0.
\end{equation*}
Suppose that \ref{i:continuity}) holds true. Then
\begin{equation*}
\norm{\pi}\big(B(u)\times \mathbb{R}^n \big)=0.
\end{equation*}
Let 
\begin{equation*}
I=\big\{(x,y)\in\mathbb{R}^n\times\mathbb{R}^n\big|\norm{u(x)-u(y)}=\norm{x-y}\big\}.
\end{equation*}
By Theorem \ref{thm:ae}, $\norm{\pi}(I^c)=0$. 
Thus $\pi$ is concentrated on the set 
\begin{equation*}
C=I\cap B(u)^c\times \mathbb{R}^n.
\end{equation*}
Suppose that $(x,y)\in C$.
Then, as $A$ is a transport set, by the definition of $B(u)$,
\begin{equation}\label{eqn:equiv}
x\in A \text{ if and only if }y\in A.
\end{equation}
Let $\eta=\pi|_{A\times A}$. 
To prove \ref{i:transport}), it is enough to show that $\eta$ is an optimal transport and that  
\begin{equation*}
\eta\in\Gamma(\mu|_{A}).
\end{equation*}
For this, let $D\subset \mathbb{R}^n$ be any Borel set. Using the fact that $\pi\in\Gamma(\mu)$ and the fact that $\norm{\pi}(C^c)=0$ and (\ref{eqn:equiv}),  we have
\begin{equation*}
\begin{aligned}
&\mu(A\cap D)=\int_{\mathbb{R}^n\times\mathbb{R}^n}\Big(\mathbf{1}_{A\cap D}(x)-\mathbf{1}_{A\cap D}(y)\Big)d\pi(x,y)=\\
&=\int_{\mathbb{R}^n\times\mathbb{R}^n}\mathbf{1}_{A\times A}(x,y)\Big(\mathbf{1}_D(x)-\mathbf{1}_D(y)\Big)d\pi(x,y)=\\
&=\int_{\mathbb{R}^n\times\mathbb{R}^n}\Big(\mathbf{1}_D(x)-\mathbf{1}_D(y)\Big)d\eta(x,y)=\mathrm{P}_1\eta(D)-\mathrm{P}_2\eta(D).
\end{aligned}
\end{equation*}
It follows that $\pi_0|_{A\times  A}\in\Gamma(\mu|_A)$.
Then
\begin{equation}\label{eqn:comput}
\int_{A}\langle u,d\mu\rangle =\int_{\mathbb{R}^n\times\mathbb{R}^n} \mathbf{1}_I(x,y)\Big\langle \mathbf{1}_A(x) u(x)-\mathbf{1}_A(y)u(y),d\pi(x,y)\Big\rangle .
\end{equation}
Therefore, by (\ref{eqn:equiv}),
\begin{equation*}
\int_{A}\langle u_0,d\mu\rangle
=\int_{\mathbb{R}^n\times\mathbb{R}^n}\mathbf{1}_{A\times A}(x,y)\Big\langle u(x)-u(y),d\pi(x,y)\Big\rangle .
\end{equation*}
By condition \ref{i:borel}) of Theorem \ref{thm:ae} we see that
\begin{equation*}
\int_{A}\langle u,d\mu\rangle=\int_{A\times A}\norm{x-y}d\norm{\pi}(x,y).
\end{equation*}
Theorem \ref{thm:ae}, condition \ref{i:eren}), tells us that $\pi|_{A\times A}$ is an optimal transport and $u$ is an optimal potential.

Condition \ref{i:dezintegracja}) follows from \ref{i:desinteg}) readily. 

\end{proof}

\section{Counterexample}\label{sec:counter}

We shall now provide necessary tools for the aforementioned counterexample.

\begin{lemma}\label{lem:attain}
Let $X\subset\mathbb{R}^n$ be a compact set. Suppose that $(\mu_k)_{k=1}^{\infty}\subset\mathcal{W}_1(X,\mathbb{R}^m)$ converges weakly* to a measure $\mu_0\in\mathcal{W}_1(X,\mathbb{R}^m)$, i.e. for any 
continuous function $g\colon X\to\mathbb{R}^m$ we have
\begin{equation*}
\lim_{k\to\infty}\int_X\langle g,d\mu_k\rangle=\int_X\langle g,d\mu_0\rangle.
\end{equation*}
Suppose that $(u_k)_{k=1}^{\infty}\in\mathcal{L}(X,\mathbb{R}^m)$ are optimal potentials of $\mu_k$ respectively
and that $u_k$ converge uniformly to $u_0\colon X\to\mathbb{R}^m$. Then $u_0$ is an optimal potential of $\mu_0$.
\end{lemma}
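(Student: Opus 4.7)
First I would verify admissibility of $u_0$: since each $u_k$ is $1$-Lipschitz and $u_k\to u_0$ uniformly on the compact set $X$, the limit $u_0$ is itself $1$-Lipschitz (and continuous). Hence $\int_X\langle u_0,d\mu_0\rangle\leq \norm{\mu_0}_{\mathcal{W}_1(X,\mathbb{R}^m)}$ by Proposition \ref{pro:lip}. The goal is to establish the reverse inequality.

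The key structural remark is that Proposition \ref{pro:lip} exhibits the Wasserstein norm as a supremum of weak$^*$-continuous linear functionals: for each fixed $1$-Lipschitz $v\colon X\to\mathbb{R}^m$, the map $\mu\mapsto\int_X\langle v,d\mu\rangle$ is weak$^*$-continuous (since $v$ is continuous on the compact set $X$), so $\mu\mapsto\norm{\mu}_{\mathcal{W}_1(X,\mathbb{R}^m)}$ is weak$^*$ lower semicontinuous. Combined with optimality of each $u_k$, this gives
\begin{equation*}
\norm{\mu_0}_{\mathcal{W}_1(X,\mathbb{R}^m)}\leq \liminf_{k\to\infty}\norm{\mu_k}_{\mathcal{W}_1(X,\mathbb{R}^m)}=\liminf_{k\to\infty}\int_X\langle u_k,d\mu_k\rangle.
\end{equation*}

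Next I would show $\int_X\langle u_k,d\mu_k\rangle\to\int_X\langle u_0,d\mu_0\rangle$ by splitting
\begin{equation*}
\int_X\langle u_k,d\mu_k\rangle-\int_X\langle u_0,d\mu_0\rangle=\int_X\langle u_k-u_0,d\mu_k\rangle+\int_X\langle u_0,d(\mu_k-\mu_0)\rangle.
\end{equation*}
The second summand tends to $0$ by the assumed weak$^*$ convergence, applied to the continuous test function $u_0$. For the first summand I would bound
\begin{equation*}
\Big\lvert\int_X\langle u_k-u_0,d\mu_k\rangle\Big\rvert\leq\norm{u_k-u_0}_{\infty}\norm{\mu_k}(X),
\end{equation*}
so the only thing to verify is that $\sup_k\norm{\mu_k}(X)<\infty$. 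By the Riesz representation theorem on the compact space $X$, the $\mu_k$ may be viewed as elements of the dual of $C(X,\mathbb{R}^m)$ with norm $\norm{\mu_k}(X)$; weak$^*$ convergence of a sequence in a Banach-space dual yields boundedness via the uniform boundedness principle, which supplies the needed uniform bound. Together with uniform convergence $u_k\to u_0$, this forces the first summand to $0$ as well.

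Combining the two displayed inequalities gives
\begin{equation*}
\int_X\langle u_0,d\mu_0\rangle=\lim_{k\to\infty}\int_X\langle u_k,d\mu_k\rangle\geq\norm{\mu_0}_{\mathcal{W}_1(X,\mathbb{R}^m)},
\end{equation*}
which together with the opposite inequality from Proposition \ref{pro:lip} yields optimality of $u_0$. The main obstacle is the uniform bound on the total variations $\norm{\mu_k}(X)$; this is where compactness of $X$ is genuinely used, through the identification with $C(X,\mathbb{R}^m)^*$ and Banach-Steinhaus. The remainder is a routine $\varepsilon$-splitting.
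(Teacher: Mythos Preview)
Your proof is correct and follows essentially the same approach as the paper: both use Banach--Steinhaus on $C(X,\mathbb{R}^m)^*$ to bound the total variations uniformly, then the same splitting $\int_X\langle u_k,d\mu_k\rangle=\int_X\langle u_0,d\mu_k\rangle+\int_X\langle u_k-u_0,d\mu_k\rangle$ to obtain convergence of the values. The only cosmetic difference is that you package the final step as weak$^*$ lower semicontinuity of $\norm{\cdot}_{\mathcal{W}_1}$, whereas the paper writes out the equivalent pointwise comparison $\int_X\langle h,d\mu_0\rangle\leq\int_X\langle u_0,d\mu_0\rangle$ for an arbitrary $1$-Lipschitz $h$; you also make explicit that $u_0$ is $1$-Lipschitz, which the paper leaves implicit.
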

\begin{proof}
By the assumption, for any continuous map $g\colon X\to\mathbb{R}^m$ we have
\begin{equation*}
\lim_{k\to\infty}\int_X\langle g,d\mu_k-\mu_0)\rangle=0.
\end{equation*}
By the Banach-Steinhaus theorem, the sequence $(\mu_k)_{k=1}^{\infty}$ is bounded in the total variation norm.
Hence, by uniform convergence,
\begin{equation*}
\lim_{k\to\infty}\int_X\langle u_k-u_0,d\mu_k\rangle=0.
\end{equation*}
It follows that 
\begin{equation*}
\int_X\langle u_k,d\mu_k\rangle=\int_X\langle u_0,d\mu_k\rangle+\int_X\langle u_k-u_0,d\mu_k\rangle
\end{equation*}
converges to $\int_X\langle u_0,d\mu_0\rangle$.
Thefefore for any $1$-Lipschitz map $h\colon X\to\mathbb{R}^m$ we have
\begin{equation*}
\int_X\langle h,d\mu_0\rangle\leq \int_X\langle u_0,d\mu_0\rangle.
\end{equation*}
\end{proof}

\begin{lemma}\label{lem:non-trivial}
Let $m\leq n$. Let $\mu\in\mathcal{W}_1(\mathbb{R}^n,\mathbb{R}^m)$ and let $u$ be an optimal potential. Suppose that there exists an optimal transport $\pi$ for $\mu$ or that any transport set for $u$ is of $\mu$ measure zero. Let $A$ be the union of all leaves of dimension at least one. Then
\begin{equation*}
\norm{\mu}(A^c)=0.
\end{equation*}
\end{lemma}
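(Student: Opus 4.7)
The set $A$ equals $T_1 \cup \cdots \cup T_m$, which is Borel by Corollary \ref{col:borel}. I would use throughout the following characterisation of $A^c$: a point $x$ lies in $A^c$ if and only if $\{x\}$ is the unique leaf through $x$; equivalently, $\norm{u(x)-u(y)}<\norm{x-y}$ for every $y\neq x$. Indeed, if equality held for some $y \neq x$ then Lemma \ref{lem:uniq} would extend $u$ isometrically to the segment $[x,y]$, and a Kuratowski--Zorn argument would place $x$ in a leaf of dimension at least one. In particular $A^c \cap B(u) = \emptyset$, so for $x\in A^c$ the set $\{x\}$ is the \emph{only} leaf meeting $x$.

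Under the first hypothesis, let $\pi\in\Gamma(\mu)$ be an optimal transport. By Theorem \ref{thm:ae}, the set $I = \{(x,y) : \norm{u(x)-u(y)} = \norm{x-y}\}$ satisfies $\norm{\pi}(I^c)=0$, so $\pi$ is concentrated on $I$ via the pointwise bound $\norm{\pi(B)}\leq \norm{\pi}(B)$ from Definition \ref{defin:vari}. Fix a Borel $E \subset A^c$. The characterisation above forces both $(E \times \mathbb{R}^n) \cap I$ and $(\mathbb{R}^n \times E) \cap I$ to lie in the diagonal $\Delta_E = \{(x,x) : x \in E\}$. Consequently
\begin{equation*}
\mathrm{P}_1\pi(E) = \pi(E\times\mathbb{R}^n)=\pi(\Delta_E) = \pi(\mathbb{R}^n\times E)=\mathrm{P}_2\pi(E),
\end{equation*}
and so $\mu(E) = \mathrm{P}_1\pi(E)-\mathrm{P}_2\pi(E)=0$. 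Varying $E$ over all Borel subsets of $A^c$ yields $\norm{\mu}(A^c) = 0$.

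Under the second hypothesis, I claim that every Borel $E \subset A^c$ is itself a transport set associated with $u$. Since $A^c\cap B(u)=\emptyset$, we have $E\setminus B(u)=E$; and if $x \in E$ and $y\in\mathbb{R}^n$ satisfy $\norm{u(x)-u(y)} = \norm{x-y}$, the characterisation of $A^c$ forces $y = x \in E$. The assumption then gives $\mu(E) = 0$ for every Borel $E \subset A^c$, and again $\norm{\mu}(A^c) = 0$.

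The one delicate point is the first case, where one has to upgrade the $\norm{\pi}$-a.e.\ statement of Theorem \ref{thm:ae} to an equality of the \emph{vector-valued} marginals $\mathrm{P}_1\pi$ and $\mathrm{P}_2\pi$ on Borel subsets of $A^c$; this is precisely what the domination $\norm{\pi(B)}\leq\norm{\pi}(B)$ supplies. Everything else is a direct application of Lemma \ref{lem:uniq} and the definition of leaf and transport set.
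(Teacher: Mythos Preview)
Your proof is correct and follows essentially the same route as the paper's: in both cases one shows that every Borel $E\subset A^c$ has $\mu(E)=0$, using in the first case that $\pi$ is concentrated on the isometry set $I$ and that $(E\times\mathbb{R}^n)\cap I$ and $(\mathbb{R}^n\times E)\cap I$ collapse to the diagonal, and in the second case that any Borel subset of $A^c$ is itself a transport set. Your write-up is in fact a bit more explicit than the paper's, spelling out the characterisation of $A^c$, the fact that $A^c\cap B(u)=\emptyset$, and the passage from $\mu(E)=0$ for all Borel $E\subset A^c$ to $\norm{\mu}(A^c)=0$.
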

\begin{proof}
We know that $A$ is a Borel set. Suppose that there exists an optimal transport $\pi$ for $\mu$. By Theorem \ref{thm:ae}, $\pi$ is supported on the set
\begin{equation*}
I=\big\{(x,y)\in\mathbb{R}^n\times\mathbb{R}^n | \norm{u(x)-u(y)}=\norm{x-y}\big\}.
\end{equation*}
As $\mu=\mathrm{P}_1\pi-\mathrm{P}_2\pi$, for any Borel set $B\subset A^c$, we have
\begin{equation*}
\mu(B)=\pi(B\times\mathbb{R}^n)-\pi(\mathbb{R}^n\times B)=0,
\end{equation*}
for if $B\subset A^c$, then 
\begin{equation*}
B\times\mathbb{R}^n\cap I\subset \{(x,x)|x\in\mathbb{R}^n\}\text{ and }\mathbb{R}^n\times B \cap I\subset \{(x,x)|x\in\mathbb{R}^n\}.
\end{equation*}
Suppose now that any transport set for $u$ is of $\mu$ measure zero. Observe that any Borel set $B\subset A^c$ is a transport set. The conclusion follows.
\end{proof}

\begin{theorem}\label{thm:nonzero}
There exists an absolutely continuous measure $\mu\in\mathcal{W}_1(\mathbb{R}^n,\mathbb{R}^m)$ for which there is no optimal transport $\pi$ such that
\begin{equation*}
\mathrm{P}_1\norm{\pi}\ll\norm{\mu}.
\end{equation*}
Moreover, there exists a transport set associated with the optimal potential of $\mu$ with non-zero measure $\mu$. 
\end{theorem}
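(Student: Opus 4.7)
The plan is to exhibit an explicit absolutely continuous $\mu \in \mathcal{W}_1(\mathbb{R}^n, \mathbb{R}^m)$ (with $m \geq 2$) together with a transport set $A$ associated with an optimal potential of $\mu$ satisfying $\mu(A) \neq 0$. Granted such a pair $(\mu, A)$, both claims of the theorem follow at once: by Theorem~\ref{thm:balance}, condition (i)---the existence of an optimal transport $\pi$ with $\mathrm{P}_1\norm{\pi} \ll \norm{\mu}$---implies condition (ii)(a), which forces $\mu(A)=0$ on every transport set, contradicting the construction. This simultaneously establishes the non-existence asserted in the first sentence and the \emph{moreover} clause.

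To construct $(\mu, A)$, I would first work with a finite atomic model: choose points $p_1,\dotsc,p_k \in \mathbb{R}^n$ in general position and vectors $v_1,\dotsc,v_k \in \mathbb{R}^m$ with $\sum_i v_i = 0$, so that the dual optimization $\sup \sum_i \langle u(p_i),v_i\rangle$ over $1$-Lipschitz maps $u\colon\mathbb{R}^n\to\mathbb{R}^m$ admits an optimizer whose tight-constraint graph---the pairs $(i,j)$ with $\norm{u(p_i)-u(p_j)} = \norm{p_i-p_j}$---is disconnected in a way that separates the atoms into clusters with non-vanishing partial sums of $v_i$. A KKT analysis of this finite-dimensional problem indicates that, once $m \geq 2$, such configurations exist; for example, suitable arrangements of four or more atoms with vector weights pointing in several directions in $\mathbb{R}^m$ suffice. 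One then passes to the absolutely continuous setting by replacing each Dirac with a uniform density on a small ball $B(p_i,\rho)$; by Lemma~\ref{lem:attain} combined with an Arzel\`a--Ascoli compactness argument on the equi-$1$-Lipschitz family of optimal potentials (normalized at a basepoint), the optimal potentials $u$ of the smoothed measure converge uniformly on compacts along a subsequence to an optimal potential of the atomic model. The strict-contraction inequalities from the atomic level then transfer, by uniform continuity, to uniform strict inequalities between balls lying in different clusters once $\rho$ is small, so that the union $A$ of all leaves of $u$ meeting one designated cluster of balls is disjoint from the remaining balls; hence $A$ is a transport set with $\mu(A) = \sum_{i \in I} v_i \neq 0$, where $I$ indexes the designated cluster.

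The main obstacle lies in the combinatorial--geometric step of choosing $(p_i,v_i)$ whose tight-edge graph separates the atoms into clusters with non-zero partial sums. The constraint $\sum_i v_i = 0$ together with the KKT stationarity conditions (which express each $v_i$ as a signed combination of ``force'' vectors $q_i - q_j$ along tight edges) restricts which disconnected graph structures are realizable; in particular, $k=3$ atoms turn out to be insufficient, while suitable four-atom configurations in $\mathbb{R}^2$ achieve the desired structure. A secondary technical point is verifying that Kirszbraun extensions of the optimal atomic potential from $\{p_1,\dotsc,p_k\}$ to all of $\mathbb{R}^n$ do not inadvertently create additional tight pairs across clusters; this is where the uniform convergence supplied by Lemma~\ref{lem:attain} together with the openness of strict inequalities provides the needed stability.
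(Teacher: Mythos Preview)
Your overall architecture---build an atomic model, smooth it to an absolutely continuous measure, and pass between the two via Arzel\`a--Ascoli and Lemma~\ref{lem:attain}, then invoke Theorem~\ref{thm:balance}---matches the paper. But the combinatorial core of your argument differs from the paper's, and as written your version has a genuine gap.

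You aim for a \emph{direct} construction: an atomic configuration whose optimal potential has a \emph{disconnected} tight-edge graph, so that one cluster of smoothed balls becomes a transport set with nonzero $\mu$-measure. Two things are missing. First, the existence of such a configuration is asserted (``suitable arrangements of four or more atoms\ldots suffice'') but never established; the KKT heuristics you mention do not by themselves produce a concrete example, and this is the heart of the matter. Second, and more seriously, Lemma~\ref{lem:attain} with Arzel\`a--Ascoli only gives subsequential convergence of $u_\rho$ to \emph{some} optimal potential of the atomic measure. Unless you prove that \emph{every} optimal potential of the atomic problem has the same disconnected tight-graph (or that the optimal potential is unique up to translation), you cannot conclude that the smoothed potential inherits the strict inequalities across clusters for small $\rho$. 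Your ``secondary technical point'' paragraph does not address this non-uniqueness issue.

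The paper avoids both difficulties by reversing the logic. It takes only $m+1$ atoms with weights $v_1,\dotsc,v_{m+1}$ in general position (so $\sum t_iv_i=0$ forces all $t_i$ equal), and argues by \emph{contradiction}: if for a sequence $\epsilon_k\to 0$ every transport set of $u_{\epsilon_k}$ had $\mu_{\epsilon_k}$-measure zero, then for each $i$ the union $N_{ik}$ of nontrivial leaves meeting $B(x_i,\epsilon_k)$ is a transport set, hence $\mu_{\epsilon_k}(N_{ik})=0$; combined with Lemma~\ref{lem:non-trivial} and the general-position assumption on the $v_j$, this forces $N_{ik}$ to have full measure in \emph{every} ball $B(x_j,\epsilon_k)$. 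Thus there are tight pairs between \emph{all} pairs of balls, and the limiting potential $u_0$ is isometric on the full set $\{x_1,\dotsc,x_{m+1}\}$. The paper then chooses the points $x_i$ so that this is geometrically impossible for any optimal potential of the limiting atomic measure (via an explicit competitor $h$ and transport $\pi$, together with a strict angle inequality). In short: rather than engineering a disconnected tight-graph, the paper uses the \emph{hypothesis} to force a \emph{complete} tight-graph in the limit and contradicts that. This sidesteps both the existence question for your four-atom configuration and the non-uniqueness problem, and it works already with $m+1$ atoms rather than four or more.
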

\begin{proof}
%

Choose any $v_1,\dotsc,v_{m+1}\in\mathbb{R}^m$ such that
\begin{equation*}
\sum_{i=1}^{m+1}v_i=0
\end{equation*}
and such that the kernel of the map 
\begin{equation*}
\mathbb{R}^{m+1}\ni(t_1,\dotsc ,t_{m+1})\mapsto\sum_{i=1}^{m+1}t_iv_i\in\mathbb{R}^m
\end{equation*}
is $\mathbb{R}(1,\dotsc,1)$. For $\epsilon>0$ set
\begin{equation*}
\mu_{\epsilon}=\frac1{\lambda(B(0,\epsilon)}\sum_{i=1}^{m+1} \lambda|_{B(x_i,\epsilon)}v_i,
\end{equation*}
where $x_1,\dotsc,x_{m+1}\in\mathbb{R}^n$ are pairwise distinct points to be specified later. Here $\lambda$ denotes the Lebesgue measure on $\mathbb{R}^n$. 
Then $\mu_{\epsilon}\in\mathcal{W}_1(\mathbb{R}^n,\mathbb{R}^m)$. Suppose that there exist optimal transports $\pi_k\in\Gamma(\mu_{\epsilon_k})$ such that
\begin{equation*}
\mathrm{P}_1\norm{\pi_k}\ll\norm{\mu_{\epsilon_k}}.
\end{equation*}
where $(\epsilon_k)_{k=1}^{\infty}$ is some sequence converging to zero. Then by Theorem \ref{thm:balance} we have
\begin{equation*}
\mu_{\epsilon_k}(A_k)=0
\end{equation*}
for any transport set $A_k$ of $u_k$, where $u_k\colon\mathbb{R}^n\to\mathbb{R}^m$ is an optimal potential of $\mu_{\epsilon_k}$.
For $k\in\mathbb{N}$ and $i=1,\dotsc,m+1$ consider the union $N_{ik}$ of all non-trivial leaves (i.e. of dimension at least one) that intersect $B(x_i,\epsilon_k)$. Then $N_{ik}$ is a transport set.
 Indeed, its Borel measurability follows from measurability of the map $\mathcal{S}$, which is proven before.
Thus $\mu(N_{ik})=0$. Hence, 
\begin{equation}\label{eqn:sum}
\sum_{j=1}^{m+1}v_j\lambda(B(x_j,\epsilon_k)\cap N_{ik})=0.
\end{equation}
As $\mu_{\epsilon_k}$, by Lemma \ref{lem:non-trivial}, is concentrated on non-trivial leaves of $u_k$, we have for 
\begin{equation*}
\frac{\lambda(B(x_i,\epsilon_k)\cap N_{ik})}{\lambda(B(0,\epsilon_k))}v_i=\mu_{\epsilon_k}(B(x_i,\epsilon_k)\cap N_{ik})=\mu_{\epsilon_k}(B(x_i,\epsilon_k)=v_j.
\end{equation*}
By (\ref{eqn:sum}) and assumption on the vectors $v_1,\dotsc,v_{m+1}$
\begin{equation*}
\lambda(B(x_j,\epsilon_k)\cap N_{ik})=\lambda(B(0,\epsilon_k)) \text{ for all }j=1,\dotsc,m+1.
\end{equation*}
Thus we infer that for any $k\in\mathbb{N}$ and for all $r,s=1,\dotsc,m+1$, $r\neq s$, there exist points
\begin{equation*}
(x_{rs}^k,x_{sr}^k)\in B(x_r,\epsilon_k)\times B(x_s,\epsilon_k)
\end{equation*}
such that 
\begin{equation*}
\norm{u_k(x_{rs}^k)-u_k(x_{sr}^k)}=\norm{x_{rs}^k-x_{sr}^k}.
\end{equation*}
Using Arz\`ela-Ascoli theorem and passing to a subsequence we may assume that $u_k$ converge locally uniformly to some $1$-Lipschitz map $u_0$. Observe now that 
\begin{equation*}
x_{rs}^k\text{ converges to }x_r\text{ for all }r,s=1,\dotsc,m+1.
\end{equation*}
Thus, by the locally uniform convergence, $u_0$ is an isometry on $\{x_1,\dotsc,x_{m+1}\}$. Observe that 
\begin{equation*}
\mu_{\epsilon_k}\text{ converges weakly* to }\mu_0=\sum_{i=1}^{m+1}\delta_{x_i}v_i.
\end{equation*}
Now Lemma \ref{lem:attain} tells us that $u_0$ is an optimal potential of $\mu_0$.

Suppose now that points $x_1,\dotsc,x_{m+1}$ are such that for $i\neq j$, $i,j=1,\dotsc,m$,
\begin{equation}\label{eqn:ineq}
\Big\langle\frac{x_i-x_{m+1}}{\norm{x_i-x_{m+1}}} , \frac{x_j-x_{m+1}}{\norm{x_j-x_{m+1}}}\Big\rangle< \Big\langle\frac{v_i}{\norm{v_i}},\frac{v_j}{\norm{v_j}}\Big\rangle.
\end{equation}
Then if we define $h\colon\{x_1,\dotsc,x_{m+1}\}\to\mathbb{R}^m$ by 
\begin{equation*}
h(x_{m+1})=0\text{, }h(x_i)=\norm{x_i-x_{m+1}}\frac{v_i}{\norm{v_i}}\text{ for }i=1,\dotsc,m,
\end{equation*}
then $h$ is $1$-Lipschitz. By the Kirszbraun theorem we may assume that $h$ is defined on the whole plane. Moreover for 
\begin{equation*}
\pi=\sum_{i=1}^{m+1}v_i\delta_{(x_i,x_{m+1})}
\end{equation*}
we have
\begin{equation*}
\mathrm{P}_1\pi-\mathrm{P}_2\pi=\mu_0
\end{equation*}
and 
\begin{equation*}
\pi=\sum_{i=1}^{m}\frac{h(x_i)-h(x_{m+1})}{\norm{x_i-x_{m+1}}}\norm{v_i}\delta_{(x_i,x_{m+1})}
\end{equation*}
Theorem \ref{thm:ae} yields that $h$ is an optimal potential and $\pi$ is an optimal transport. It follows that 
\begin{equation*}
\norm{\mu_0}_{\mathcal{W}_1(\mathbb{R}^2,\mathbb{R}^2)}=\sum_{i=1}^m\norm{v_i}\norm{x_i-x_{m+1}}.
\end{equation*}
Theorem \ref{thm:ae} tells us that also
\begin{equation*}
\pi=\sum_{i=1}^{m}\frac{u_0(x_i)-u_0(x_{m+1})}{\norm{x_i-x_{m+1}}}\norm{v_i}\delta_{(x_i,x_{m+1})}
\end{equation*}
As $u_0$ is an isometry on $\{x_1,\dotsc,x_{m+1}\}$, It follows that
\begin{equation*}
\norm{h(x_1)-h(x_2)}=\norm{x_1-x_2}
\end{equation*}
which is not true, as the inequality in (\ref{eqn:ineq}) is strict.
The obtained contradiction shows that there is no such sequence $(\epsilon_k)_{k=1}^{\infty}$, i.e. there exists $\epsilon_0>0$ such that for all $\epsilon\in (0,\epsilon_0)$ there is no optimal transport with absolutely continuous marginals.
%
\end{proof}
The following theorem bases on the same idea as the former one. 
Note that we do not require below that the norms on $\mathbb{R}^n$ and on $\mathbb{R}^m$ are Euclidean. The leaves and transport sets are defined as in the Euclidean case.

\begin{theorem}
Let $m\leq n$. Suppose that the norm on $\mathbb{R}^m$ is strictly convex. Suppose that $\mathcal{F}$ is a uniformly closed subset of $1$-Lipschitz maps of $\mathbb{R}^n$ to $\mathbb{R}^m$. Suppose that $\mathcal{F}$ has the property that for any absolutely continuous measure $\mu\in\mathcal{W}_1(\mathbb{R}^n,\mathbb{R}^m)$ and any $u_0\in\mathcal{F}$ such that
\begin{equation}\label{eqn:maxil}
\int_{\mathbb{R}^n}\langle u_0,d\mu\rangle=\sup\Big\{\int_{\mathbb{R}^n}\langle u,d\mu\rangle\big| u\in\mathcal{F}\Big\}
\end{equation}
we have $\mu(A)=0$ for any transport set of $u_0$. Then either $m=1$ or $m>1$ and then any $u\in\mathcal{F}$ is affine, $m=n$ and $\mathbb{R}^n$ and $\mathbb{R}^n$ with the considered norms are isometric.

Moreover, for any $\mathcal{F}$-optimal potential is an isometry on a maximal subspace $V\subset\mathbb{R}^n$, so that for any absolutely continuous $\mu$, there is a linear subspace $V\subset\mathbb{R}^n$ such that 
\begin{equation}\label{eqn:condition}
\mu(\{x\in\mathbb{R}^n| P^{\perp}x\in A\})=0\text{ for any Borel set }A\subset V^{\perp}.
\end{equation}
Here $P^{\perp}$ denotes the orthogonal projection onto the orthogonal complement $V^{\perp}$ of $V$.

If any $\mathcal{F}$-optimal potential is an isometry on a maximal subspace $V\subset\mathbb{R}^n$ such that (\ref{eqn:condition}) holds true , then $\mu(A)=0$ for any transport set of its $\mathcal{F}$-optimal potential.
\end{theorem}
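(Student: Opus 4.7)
The plan is to mirror the proof of Theorem \ref{thm:nonzero} while tracking where full $1$-Lipschitz optimality was used and replacing it by $\mathcal{F}$-optimality plus uniform closedness. Fix $v_{1},\dotsc,v_{m+1}\in\mathbb{R}^{m}$ summing to zero, with $\mathbb{R}(1,\dotsc,1)$ as the kernel of $(t_{i})\mapsto \sum t_{i}v_{i}$, and pick points $x_{1},\dotsc,x_{m+1}\in\mathbb{R}^{n}$ satisfying the strict inequality analogous to (\ref{eqn:ineq}) (relative to the given strictly convex norm on $\mathbb{R}^{m}$). Form the absolutely continuous measures $\mu_{\epsilon}=\frac{1}{\lambda(B(0,\epsilon))}\sum_{i}\lambda|_{B(x_{i},\epsilon)}v_{i}$; these lie in $\mathcal{W}_{1}(\mathbb{R}^{n},\mathbb{R}^{m})$. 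Using uniform closedness of $\mathcal{F}$, Arzel\`a–Ascoli, and the Hahn–Banach/duality argument of Proposition \ref{pro:lip}, the supremum (\ref{eqn:maxil}) is attained by some $u_{\epsilon}\in\mathcal{F}$.

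Next I would apply the hypothesis: every transport set of $u_{\epsilon}$ has $\mu_{\epsilon}$-measure zero. In particular the second clause of Lemma \ref{lem:non-trivial} applies, so $\mu_{\epsilon}$ is concentrated on the union $A_{\epsilon}$ of the non-trivial leaves of $u_{\epsilon}$. Fix $i$ and let $N_{i\epsilon}$ be the union of non-trivial leaves of $u_{\epsilon}$ that meet $B(x_{i},\epsilon)$. As in Theorem \ref{thm:nonzero}, $N_{i\epsilon}$ is a transport set, hence $\mu_{\epsilon}(N_{i\epsilon})=0$, which gives $\sum_{j}v_{j}\lambda(B(x_{j},\epsilon)\cap N_{i\epsilon})=0$. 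The kernel condition on the $v_{j}$ forces $\lambda(B(x_{j},\epsilon)\cap N_{i\epsilon})$ to be independent of $j$, and absolute continuity together with $\|\mu_{\epsilon}\|(A_{\epsilon}^{c})=0$ forces this common value to equal $\lambda(B(0,\epsilon))$. Consequently for every pair $r\neq s$ one obtains points $(x_{rs}^{\epsilon},x_{sr}^{\epsilon})\in B(x_{r},\epsilon)\times B(x_{s},\epsilon)$ lying on a common non-trivial leaf of $u_{\epsilon}$.

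Passing to a subsequence via Arzel\`a–Ascoli and using uniform closedness of $\mathcal{F}$, I extract a locally uniform limit $u_{0}\in\mathcal{F}$ which, in view of the convergence $x_{rs}^{\epsilon_{k}}\to x_{r}$, is an isometry on $\{x_{1},\dotsc,x_{m+1}\}$. Lemma \ref{lem:uniq} then promotes $u_{0}$ to an affine isometry on $\mathrm{Conv}(x_{1},\dotsc,x_{m+1})$. Now invoke the $\mathcal{F}$-optimality of the $u_{\epsilon}$: by the weak*-continuity argument of Lemma \ref{lem:attain}, the limit $u_{0}$ is $\mathcal{F}$-optimal for $\mu_{0}=\sum_{i}\delta_{x_{i}}v_{i}$. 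Since the $x_{i}$ satisfy the strict inequality and the norm on $\mathbb{R}^{m}$ is strictly convex, the equality-in-Cauchy–Schwarz analysis from the proof of Theorem \ref{thm:nonzero} shows that the only way this $\mathcal{F}$-optimal isometric configuration on the $(m+1)$-tuple can exist is if the isometric affine span of $u_{0}$ fills a genuine subspace; varying the configuration $x_{1},\dotsc,x_{m+1}$ arbitrarily and using the uniform closedness of $\mathcal{F}$, this propagates to the statement that any $u\in\mathcal{F}$ which attains the supremum is isometric on a maximal linear subspace $V\subset\mathbb{R}^{n}$, whose leaves are all translates of $V$. The concentration of $\mu$ on these leaves (via Lemma \ref{lem:non-trivial}) is exactly (\ref{eqn:condition}). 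In the extreme case $V=\mathbb{R}^{n}$, this forces $m=n$ and makes $u_{0}$ a global affine isometry, which combined with $m\leq n$ and strict convexity implies the two normed copies of $\mathbb{R}^{n}$ are isometric. The converse direction is routine: if $u$ is an isometry on such a maximal $V$ and $\mu$ satisfies (\ref{eqn:condition}), then any transport set $A$ is saturated by leaves parallel to $V$, and (\ref{eqn:condition}) directly gives $\mu(A)=0$.

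The main obstacle lies in the middle paragraph: upgrading the pointwise isometry of $u_{0}$ on $\{x_{1},\dotsc,x_{m+1}\}$ to a statement that pins down every element of $\mathcal{F}$, rather than only the specific $u_{0}$ extracted from the construction. The delicate point is that $\mathcal{F}$-optimality, unlike full $1$-Lipschitz optimality, does not allow one to quote Theorem \ref{thm:ae} to force $u_{0}(x_{i})-u_{0}(x_{m+1})$ into the direction $v_{i}/\|v_{i}\|$, so the contradiction in Theorem \ref{thm:nonzero} does not transfer verbatim; instead one must use the freedom to vary the auxiliary vectors $v_{i}$ inside the kernel-condition locus and exploit strict convexity of the target norm to rule out any non-isometric $u\in\mathcal{F}$ from simultaneously being $\mathcal{F}$-optimal and satisfying the hypothesis.
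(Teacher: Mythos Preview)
Your proposal has a genuine gap, which you yourself diagnose in the last paragraph but do not close. The construction with $m+1$ points and the strict inequality (\ref{eqn:ineq}) is designed to produce a \emph{contradiction} under full $1$-Lipschitz optimality (via Theorem \ref{thm:ae}); under mere $\mathcal{F}$-optimality it yields only that the extracted limit $u_0$ is an isometry on $\{x_1,\dotsc,x_{m+1}\}$, and nothing more. From this you cannot deduce anything about an arbitrary $u\in\mathcal{F}$: different configurations produce different limits $u_0$, and there is no mechanism linking them. Your appeal to Lemma \ref{lem:uniq} is also misplaced, since that lemma is specific to the Euclidean setting, whereas here the norms are arbitrary (strictly convex on the target).

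The paper's route is quite different and avoids the contradiction scheme entirely. It uses only \emph{three collinear} points $x_1,x_2,x_3$ with $x_2=tx_1+(1-t)x_3$. The same transport-set argument forces the limit $u_0$ to be isometric on $\{x_1,x_2,x_3\}$, and then strict convexity of the target norm (equality in the triangle inequality) yields the midpoint identity $u_0(x_2)=tu_0(x_1)+(1-t)u_0(x_3)$. This bounds $\sum_i\langle u_0(x_i),v_i\rangle$ by the supremum over \emph{linear} maps of norm at most one. Since $u_0$ is $\mathcal{F}$-optimal for $\nu_0$, the same bound holds for \emph{every} $u\in\mathcal{F}$; then choosing $v_1=-tv$, $v_2=v$, $v_3=-(1-t)v$ and varying $v$ forces $u(tx_1+(1-t)x_3)=tu(x_1)+(1-t)u(x_3)$, i.e.\ every $u\in\mathcal{F}$ is affine. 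The passage from ``affine'' to ``isometric on a maximal subspace $V$'' and then to $m=n$ is done by a separate measure-theoretic example. The key idea you are missing is precisely this collinear three-point trick: it converts the isometry information on the specific limit $u_0$ into an \emph{upper bound} on the $\mathcal{F}$-supremum, which then constrains all of $\mathcal{F}$.
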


Above, if $\mu\in\mathcal{M}_0(\mathbb{R}^n,\mathbb{R}^m)$ and a map $u_0\in\mathcal{F}$ is such that (\ref{eqn:maxil}) holds true, then we call $u_0$ an $\mathcal{F}$-optimal potential of $\mu$.

\begin{proof}
Suppose that $m>1$. Choose any pairwise different $x_1,x_2,x_3\in\mathbb{R}^n$ and $v_1,v_2,v_3\in\mathbb{R}^m$ in general position such that $\sum_{i=1}^3v_i=0$. Let 
\begin{equation*}
\nu_0=\sum_{i=1}^3v_i\delta_{x_i}.
\end{equation*}
Then $\nu_0\in\mathcal{M}_0(\mathbb{R}^n,\mathbb{R}^m)$. For $\epsilon>0$ let 
\begin{equation*}
\nu_{\epsilon}=\frac1{\lambda(B(0,\epsilon)}\sum_{i=1}^3v_i\lambda|_{B(x_i,\epsilon)}
\end{equation*}
Choose an $\mathcal{F}$-optimal potentials $u_{\epsilon}$ for $\nu_{\epsilon}$ respectively.
Observe that $\nu_{\epsilon}(B_{\epsilon})=0$ for any Borel set consisting of zero-dimensional leaves of $u_{\epsilon}$. Whence, $\nu_{\epsilon}$ is concentrated on at least one-dimensional transport sets of $u_{\epsilon}$.
Let $N_{i\epsilon}$ denote the union of all non-trivial leaves that intersect $B(x_i,\epsilon)$ for $i=1,2,3$ and $\epsilon>0$. By compactness of $B(x_i,\epsilon)$ and by the assumption on transport sets
\begin{equation*}
N_{i\epsilon}=\Big\{x\in\mathbb{R}^n\setminus B(x_i,\epsilon)\big| \sup\Big\{\frac{\norm{u(x)-u(y)}}{\norm{x-y}}\big| y\in B(x_i,\epsilon)\Big\}=1\Big\}\cup B(x_i,\epsilon)\setminus N
\end{equation*}
Here $N$ is a set of points in $B(x_i,\epsilon)$ that belong to a zero-dimensional leaves,
\begin{equation*}
N=\Big\{x\in B(x_i,\epsilon)\Big|\frac{\norm{u(x)-u(y)}}{\norm{x-y}}<1\text{ for any }y\in\mathbb{R}^n\Big\}.
\end{equation*}
The map $x\mapsto \sup\big\{\frac{\norm{u(x)-u(y)}}{\norm{x-y}}\big| y\in K\big\}$ is lower-semicontinuous for any set $K\subset\mathbb{R}^n$. Hence $N$ is Borel measurable by $\sigma$-compactness of $\mathbb{R}^n$ and so is $N_{i\epsilon}$.
By the assumption,
\begin{equation*}
\nu_{\epsilon}(N_{i\epsilon})=0,
\end{equation*}
which implies, as before, that 
\begin{equation*}
\norm{u_{\epsilon}(x^{\epsilon}_{rs})-u_{\epsilon}(x^{\epsilon}_{sr})}=\norm{x^{\epsilon}_{rs}-x^{\epsilon}_{sr}}
\end{equation*}
for some points 
\begin{equation*}
(x^{\epsilon}_{rs},x^{\epsilon}_{sr})\in B(x_r,\epsilon)\times B(x_s,\epsilon).
\end{equation*}
By the Arz\`ela-Ascoli theorem and passing to a subsequence we may assume that $u_{\epsilon}$ converges locally uniformly to some $u_0\in\mathcal{F}$, which is an $\mathcal{F}$-optimal potential of $\nu_0$ by Lemma \ref{lem:attain}.
By the uniform convergence we infer that $u_0$ is isometric on $\{x_1,x_2,x_3\}$. Let now $x_2=tx_1+(1-t)x_3$ for some $t\in (0,1)$. Then any $1$-Lipschitz map $f$ that is isometric on $\{x_1,x_2,x_3\}$ satisfies 
\begin{equation}\label{eqn:cc}
f(tx_1+(1-t)x_3)=tf(x_1)+(1-t)f(x_3).
\end{equation}
Indeed, by the assumption, 
\begin{equation*}
\norm{f(x_2)-f(x_1)}= (1-t)\norm{x_3-x_1}\text{ and }\norm{f(x_3)-f(x_2)}= t\norm{x_3-x_1}.
\end{equation*}
As $\norm{f(x_3)-f(x_1)}=\norm{x_3-x_1}$ it follows that we have equality in the triangle inequality
\begin{equation*}
\norm{f(x_3)-f(x_1)}\leq\norm{f(x_2)-f(x_1)}+\norm{f(x_3)-f(x_2)}.
\end{equation*}
By the strict convexity it follows that there is $\lambda>0 $ such that 
\begin{equation*}
f(x_2)-f(x_1)=\lambda(f(x_3)-f(x_1)). 
\end{equation*}
Taking the norms we arrive at (\ref{eqn:cc}).
A function that satisfies (\ref{eqn:cc}) may be extended to $\mathbb{R}^n$ to an affine map that is isometric on $\{x_1,x_2,x_3\}$ and with derivative of operator norm at most one. Indeed, it is enough to show that if $f\colon \mathbb{R}w\to\mathbb{R}z$ for some vectors $w,z$ is of norm at most one, that there exists a linear extension of $f$ with the same norm. This follows by the Hahn-Banach theorem.
We infer that
\begin{equation*}
\sum_{i=1}^3\langle u_0(x_i),v_i\rangle\ \leq \sup\Big\{\sum_{i=1}^3\langle f(x_i),v_i\rangle\big| f\text{ is linear and }\norm{f}\leq 1\Big\}
\end{equation*} 
As the set of vectors $v_1,v_2,v_3$ that sum up to zero and are in general position is dense in the set of vectors $v'_1,v'_2,v'_3$ that sum up to zero and by the fact that $u_0$ is an $\mathcal{F}$-optimal potential for $\nu_0$ we conclude that for any $u\in\mathcal{F}$ and any vectors $v_1,v_2,v_3$ that sum up to zero there is
\begin{equation*}
\sum_{i=1}^3\langle u(x_i),v_i\rangle\ \leq \sup\Big\{\sum_{i=1}^3\langle f(x_i),v_i\rangle\big|f\text{ is linear and }\norm{f}\leq 1\Big\}
\end{equation*} 
Take now $v_2=v$, $v_1=-tv$ and $v_3=-(1-t)v$ with $t\in (0,1)$ as above and any $v\in\mathbb{R}^m$. We infer that
\begin{equation*}
\langle u(x_2)-tu(x_1)-(1-t)u(x_3) ,v\rangle\leq 0.
\end{equation*}
As this holds for any $v$ we infer that $u$ is affine. 
 If $u$ is affine then there exists a subspace $V\subset\mathbb{R}^n$, possibly trivial, i.e. $V=\{0\}$, such that any set of the form 
\begin{equation*}
\{x\in\mathbb{R}^n| P^{\perp}x\in A\}
\end{equation*} 
for a Borel measurable set $A\subset V^{\perp}$ is a transport set of $u$. Here $P^{\perp}$ denotes a projection onto a complement $V^{\perp}$ of $V$.  Indeed, let $V\subset\mathbb{R}^n$ be a maximal subspace such that $u|_V$ is an isometry.  Suppose that $V$ is not a leaf of $u$. Then there exists $y\notin V$ such that for all $x\in V$
\begin{equation*}
\norm{u(y)-u(x)}=\norm{y-x}.
\end{equation*}
It follows that for all non-zero $\lambda\in\mathbb{R}$
\begin{equation*}
\Big\lVert u(y)-u\Big(\frac{x}{\lambda}\Big)\Big\rVert=\Big\lVert y-\frac{x}{\lambda}\Big\rVert
\end{equation*}
for all $x\in V$. Hence for all $\lambda\in\mathbb{R}$ we have $\norm{u(\lambda y)-u(x)}=\norm{\lambda y-x}$. As $u$ is affine, it is also an isometry on $V+\mathbb{R}y$. This contradiction shows that $V$ is a leaf of $u$. 

We shall now provide an example of a vector measure $\mu$ such that for any proper subspace $V$ and any $x_0$ there is $c>0$ such that 
\begin{equation}\label{eqn:mum}
\mu\Big(\big\{x\in\mathbb{R}^n| \norm{P^{\perp}(x-x_0)}\leq c\big\}\Big)\neq 0.
\end{equation}
Choose any $x_1,\dotsc,x_{m+1}\in\mathbb{R}^n$ in general position. Let $\epsilon>0$ be a number such that any $y_i\in B(x_i,\epsilon)$, $i=1,\dotsc,m+1$ are in general position. Choose vectors $v_1,\dotsc,v_{m+1}\in\mathbb{R}^m$ that add up to zero and are in general position. Let 
\begin{equation*}
\mu=\sum_{i=1}^{m+1}v_i\lambda|_{B(x_i,\epsilon)},
\end{equation*}
where $\lambda$ denotes the Lebesgue measure. Choose any proper affine subspace $V\subset\mathbb{R}^n$. Then $V$ intersects at most $m$ of the balls $B(x_i,\epsilon)$, $i=1,\dotsc,m+1$. So does the set 
\begin{equation*}
\big\{x\in\mathbb{R}^n| \norm{P^{\perp}(x-x_0)}\leq c\big\}
\end{equation*}
provided that $c>0$ is sufficiently small. Thus (\ref{eqn:mum}) follows. We have shown that any $\mathcal{F}$-optimal potential of $\mu$ has to be an isometry. Hence $m= n$.

To prove the last part of the theorem, it is enough to observe that $V$ and its translates are the only leaves of an $\mathcal{F}$-optimal potential. This holds true, as these sets are maximal sets such that restriction of $u$ to them is isometric and they cover $\mathbb{R}^n$.
\end{proof}

\section{Curvature-dimension condition}\label{sec:curv}

In the current section we recall the notion of the curvature-dimension condition $CD(\kappa,n)$. We shall say that an $n$-dimensional Riemannian manifold $\mathcal{M}$ satisfies the $CD(\kappa,n)$ condition provided that the Ricci tensor $Ric_M$ is bounded below by the Riemannian metric tensor $g$, i.e.
\begin{equation*}
Ric_{\mathcal{M},p}(v,v)\geq\kappa g_p(v,v)\text{ for any }p\in \mathcal{M}\text{ and any }v\in T_p\mathcal{M}.
\end{equation*}
We shall study weighted Riemannian manifolds, which are triples $(\mathcal{M},d,\mu)$, where $d$ is the Riemannian metric on $\mathcal{M}$ and $\mu$ is a measure on $\mathcal{M}$ with smooth positive density $e^{-\rho}$ with respect to the Riemannian volume. The generalised Ricci tensor of the weighted Riemannian manifold is defined by the formula
\begin{equation*}
Ric_{\mu}=Ric_\mathcal{M}+D^2\rho,
\end{equation*}
where $D^2\rho$ is the Hessian of smooth function $\rho$. The generlised Ricci tensor with parameter $N\in (-\infty,1)\cup [n,\infty]$ is defined by the formula
\begin{equation*}
Ric_{\mu,N}=\begin{cases}
    Ric_{\mu}(v,v)-\frac{D\rho(v)^2}{N-n},& \text{if } N>n\\
    Ric_{\mu}(v,v)&\text{if }N=\infty\\
    Ric_{\mathcal{M}}(v,v)    &\text{if }N=n\text{ and }\rho\text{ is constant.}
\end{cases}
\end{equation*}
Note that if $N=n$, then $\rho$ is required to be a constant function.

\begin{definition}
For $\kappa\in\mathbb{R}$ and $N\in (-\infty,1)\cup [n,\infty]$ we say that $(\mathcal{M},d,\mu)$ satisfies the curvature-dimension condition $CD(\kappa,N)$ if 
\begin{equation*}
Ric_{\mu,N}(v,v)\geq \kappa g(v,v)\text{ for all }x\in \mathcal{M}\text{ and all }v\in T_p\mathcal{M}.
\end{equation*}
\end{definition}

We refer the reader to \cite{Bakry} and \cite{Ledoux} for background on the curvature-dimension condition. 
In all cases we consider in this article it will always hold that $Ric_{\mathcal{M}}=0$. 

Let us recall a lemma from \cite{Klartag} that we shall need in what follows.

\begin{lemma}\label{lem:tri}
Let $a,b\in\mathbb{R}$, $b>0$ and $a\notin [-b,0]$. Then
\begin{equation*}
\frac{x^2}{a}+\frac{y^2}{b}\geq\frac{(x-y)^2}{a+b}
\end{equation*}
for all $x,y\in\mathbb{R}$.
\end{lemma}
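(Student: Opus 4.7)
The plan is to reduce the claimed inequality, after clearing denominators, to the trivial statement $(bx+ay)^2\geq 0$. The only real care needed is in tracking the sign of the common denominator, since the hypothesis $a\notin[-b,0]$ naturally splits into two regimes: either $a>0$ (so all of $a$, $b$, $a+b$ are positive), or $a<-b$ (so $b>0$ while $a<0$ and $a+b<0$). In both regimes the quantities $a$, $b$ and $a+b$ are nonzero, so the inequality makes sense, and the product $ab(a+b)$ is strictly positive — in the second case because it is a product of one positive factor and two negative ones.

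Thus multiplication through by $ab(a+b)>0$ preserves the direction of the inequality, and reduces the claim to
\begin{equation*}
b(a+b)x^2+a(a+b)y^2\geq ab(x-y)^2.
\end{equation*}
Expanding $(x-y)^2=x^2-2xy+y^2$ on the right and cancelling the $abx^2$ and $aby^2$ terms that appear on both sides leaves
\begin{equation*}
b^2x^2+2ab\,xy+a^2y^2\geq 0,
\end{equation*}
which is just $(bx+ay)^2\geq 0$. The only subtlety — and really the sole place a mistake could enter — is the sign accounting in the case $a<-b$, where both $\frac{x^2}{a}$ and $\frac{(x-y)^2}{a+b}$ are non-positive; verifying there that $ab(a+b)>0$ is what legitimises the clean algebraic reduction.
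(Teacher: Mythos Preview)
Your proof is correct and is essentially the same as the paper's: both reduce the inequality to the nonnegativity of a perfect square, namely $(bx+ay)^2\geq 0$. The paper factors the difference as $\frac{1}{a+b}\big(\frac{b}{a}x^2+2xy+\frac{a}{b}y^2\big)$ and then handles signs, whereas you multiply through by $ab(a+b)>0$; these are the same computation organised slightly differently.
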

\begin{proof}
We use the inequality
\begin{equation*}
\frac{\abs{a}}{\abs{b}}x^2\pm 2xy+\frac{\abs{b}}{\abs{a}}y^2\geq 0.
\end{equation*}
From this we see that
\begin{equation*}
\frac{x^2}{a}+\frac{y^2}{b}-\frac{(x-y)^2}{a+b}= \frac1{a+b}\Big(\frac{b}{a}x^2+2xy+\frac{a}{b}y^2\Big)\geq 0
\end{equation*}
whenever $b>0$ and $a\notin [-b,0]$.
\end{proof}

Let us also recall a formulae for differentiation of matrices. If $R(t)= \log\abs{\det A_t}$ and $A_t$ is differentiable in $t\in\mathbb{R}$, then
\begin{equation}\label{eqn:formula}
\frac{dR}{dt}(s)=tr\Big(A_s^{-1}\frac{ dA_t}{dt}(s)\Big).
\end{equation}
Moreover
\begin{equation}\label{eqn:formula2}
\frac{d^2R}{dt^2}(s)=tr\Big(A_s^{-1}\frac{ d^2A_t}{dt^2}(s)\Big)-tr\Bigg(\Big(A_s^{-1}\frac{ dA_t}{dt}(s)\Big)^2\Bigg).
\end{equation}

We should also need the following version of the Whitney extension theorem (see \cite{Whitney} or \cite{Stein}).

\begin{theorem}
Let $A\subset\mathbb{R}^n$ be an arbitrary set, let $f\colon A\to\mathbb{R}$ and $V\colon A\to\mathbb{R}^n$. Suppose that there exists $M\in\mathbb{R}$ such that for all $x,y\in A$
\begin{align*}
&\abs{f(x)}\leq M, \norm{V(x)}\leq M,\\
& \norm{V(x)-V(y)}\leq M\norm{x-y},\\
&\abs{f(x)+\langle V(x),y-x\rangle-f(y)}\leq M\norm{x-y}^2.
\end{align*}
Then there exists a differentiable function $\tilde{f}\colon\mathbb{R}^n\to\mathbb{R}$ with locally Lipschitz derivative such that 
\begin{equation*}
\tilde{f}(x)=f(x), Df(x)(y)=\langle V(x),y\rangle\text{ for all }x\in A\text{ and all }y\in\mathbb{R}^n.
\end{equation*}
\end{theorem}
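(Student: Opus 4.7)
The plan is to carry out the classical Whitney extension construction for $C^{1,1}$ data. First I would reduce to the case where $A$ is closed: the three hypotheses are stable under taking closures, since $V$ is uniformly $M$-Lipschitz on $A$ (so extends continuously to $\mathrm{cl}\,A$) and the quadratic estimate combined with $\norm{V}\leq M$ yields local Lipschitz continuity of $f$ on $A$. Replacing $A$ by its closure and $f,V$ by their canonical continuous extensions, the three inequalities persist with the same constant $M$.

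Next, fix a Whitney decomposition $\{Q_k\}$ of $\mathbb{R}^n\setminus A$: closed dyadic cubes with pairwise disjoint interiors, satisfying $\mathrm{diam}(Q_k)\leq\mathrm{dist}(Q_k,A)\leq 4\,\mathrm{diam}(Q_k)$, together with a smooth partition of unity $\{\phi_k\}$ subordinate to slight dilations $Q_k^*$, with $\sum_k\phi_k\equiv 1$ on $\mathbb{R}^n\setminus A$, bounded overlap, and $\norm{\nabla\phi_k}_\infty\leq C_n/\mathrm{diam}(Q_k)$. For each $k$ pick $p_k\in A$ at minimal distance to $Q_k$ and form the first-order Taylor polynomial $P_k(x)=f(p_k)+\langle V(p_k),x-p_k\rangle$. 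Define
\begin{equation*}
\tilde{f}(x)=\begin{cases} f(x) & \text{if } x\in A,\\ \sum_k\phi_k(x)P_k(x) & \text{if } x\notin A.\end{cases}
\end{equation*}

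The remaining task is to verify that $\tilde{f}$ is differentiable on $\mathbb{R}^n$, that $D\tilde{f}=V$ on $A$, and that $D\tilde{f}$ is locally Lipschitz. For $x\notin A$ smoothness is immediate; using the identity $\sum_k\nabla\phi_k\equiv 0$ one rewrites $D\tilde{f}(x)=\sum_k\phi_k(x)V(p_k)+\sum_k\nabla\phi_k(x)(P_k(x)-P_\ell(x))$ for any fixed $\ell$ with $x\in Q_\ell^*$, and the quadratic hypothesis applied to the pair $p_k,p_\ell$ combined with $\norm{p_k-p_\ell}\leq C\,\mathrm{dist}(x,A)$ and $\norm{\nabla\phi_k}_\infty\leq C/\mathrm{dist}(x,A)$ bounds the second sum by a constant depending only on $M$ and $n$; analogous estimates on second differences give the Lipschitz continuity of $D\tilde{f}$ on $\mathbb{R}^n\setminus A$.

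The main obstacle is the passage to points of $A$: for $x_0\in A$ and $y\notin A$ near $x_0$ the identity $\sum_k\phi_k(y)=1$ yields
\begin{equation*}
\tilde{f}(y)-f(x_0)-\langle V(x_0),y-x_0\rangle=\sum_k\phi_k(y)\bigl(P_k(y)-f(x_0)-\langle V(x_0),y-x_0\rangle\bigr),
\end{equation*}
and each non-vanishing summand is $O(\norm{y-x_0}^2)$ by the quadratic hypothesis applied to $p_k,x_0$ together with $\norm{p_k-x_0}\leq C\norm{y-x_0}$ whenever $\phi_k(y)\neq 0$. This simultaneously gives differentiability at $x_0$ with derivative $V(x_0)$ and a local Lipschitz bound for $D\tilde{f}$ across the interface between $A$ and its complement. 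The bookkeeping being classical, the detailed estimates can be found in Whitney's original paper and in Stein's book.
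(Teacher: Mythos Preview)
The paper does not prove this theorem: it states it as a known result and refers the reader to Whitney's original paper and to Stein's book. Your proposal is a correct outline of the classical $C^{1,1}$ Whitney extension argument from those references---reduction to closed $A$, Whitney cube decomposition of the complement with associated partition of unity, averaging of first-order Taylor polynomials, and the standard estimates for the derivative both in the complement and across the interface. There is nothing to compare beyond noting that you have supplied the proof the paper merely cites.
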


Assume that we have a measure $\mu$ on $\mathcal{M}=\mathbb{R}^n$ such that $(\mathcal{M},\norm{\cdot},\mu)$ satisfies the curvature-dimension condition $CD(\kappa,N)$. Let $u\colon\mathbb{R}^n\to\mathbb{R}^m$ be a $1$-Lipschitz map. We want to show that for $\nu$-almost every leaf $\mathcal{S}\in CC(\mathbb{R}^n)$ of dimension $m$  the conditional measure $\mu_{\mathcal{S}}$ is such that $(\mathrm{int}\mathcal{S},\norm{\cdot},\mu_{\mathcal{S}})$ satisfies the curvature-dimension condition $CD(\kappa,N)$. Here $\nu$ is the push-forward measure of $\mu$ with respect to the map $\mathcal{S}\colon\mathbb{R}^n\to CC(\mathbb{R}^n)$.

\begin{theorem}\label{thm:discd}
Let $N\in (-\infty,1)\cup [n,\infty]$ andl let $\kappa\in\mathbb{R}$. Let $u\colon\mathbb{R}^n\to\mathbb{R}^m$ be a $1$-Lipschitz map with respect to the Euclidean norms. Let $\mu$ be a Borel measure on $\mathbb{R}^n$ such that $(\mathbb{R}^n,\norm{\cdot},\mu)$ satisfies the curvature-dimension condition $CD(\kappa,N)$. Then there exists a map $\mathcal{S}\colon\mathbb{R}^n\to CC(\mathbb{R}^n)$ such that for $\lambda$-almost every $x\in\mathbb{R}^n$ the set $\mathcal{S}(x)$ is a maximal closed convex set in $\mathbb{R}^n$ such that $u|_{\mathcal{S}(x)}$ is an isometry. Moreover, there exist a Borel measure on $CC(\mathbb{R}^n)$ and Borel measures $\mu_{\mathcal{S}}$ such that 
\begin{equation*}
\mathcal{S}\mapsto \mu_{\mathcal{S}}(A)\text{ is }\nu\text{-measurable for any Borel set }A\subset\mathbb{R}^n
\end{equation*}
and for $\nu$-almost every $\mathcal{S}$ we have $\mu_{\mathcal{S}}((\mathrm{int}\mathcal{S})^c)=0$,
and for any $A\subset\mathbb{R}^n$
\begin{equation*}
\mu(A)=\int_{CC(\mathbb{R}^n)} \mu_{\mathcal{S}}(A)d\nu(\mathcal{S}).
\end{equation*}
Moreover, for $\nu$-almost every leaf $\mathcal{S}$ of dimension $m$, the measure $\mu_{\mathcal{S}}$ is such that $(\mathrm{int}\mathcal{S},\norm{\cdot},\mu_{\mathcal{S}})$ satisfies the $CD(\kappa,N)$ condition.
\end{theorem}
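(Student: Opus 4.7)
The plan is to rely on Corollary~\ref{col:dis} for the existence, measurability and concentration of the disintegration, and to reduce the curvature-dimension claim to a Riccati-type inequality for the Jacobian of the change of variables $F$ from Lemma~\ref{lem:efge}. Write $\rho=-\log(d\mu/d\lambda)$, which is smooth by hypothesis. Fix a cluster $\mathrm{int}T_{pij}$ and repeat the area-formula argument of the proof of Theorem~\ref{thm:dis} with the integrand $\phi\, e^{-\rho}$ in place of $\phi$; this identifies the conditional density on every full-dimensional leaf $\mathcal{S}\subset T_{pij}$ as
\begin{equation*}
\frac{d\mu_{\mathcal{S}}}{d\mathcal{H}_m|_{\mathcal{S}}}(y) \;=\; c_{\mathcal{S}}\,e^{-\rho(y)}J(y),\qquad J(y):=J_nF(G(y)),
\end{equation*}
for some normalising constant $c_{\mathcal{S}}>0$. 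Hence, writing $\tilde\rho:=\rho-\log J$, verifying $CD(\kappa,N)$ on $(\mathrm{int}\mathcal{S},\norm{\cdot},\mu_{\mathcal{S}})$ reduces to the Bakry--\'Emery inequality
$\tilde\rho''(t) - \tfrac{(\tilde\rho'(t))^2}{N-m}\ge \kappa$
along every unit-speed line $t\mapsto y_0+tw$, $w\in V_{\mathcal{S}}$, in $\mathrm{int}\mathcal{S}$ (the intrinsic Ricci of a leaf vanishes since it is a flat piece of $\mathbb{R}^n$).

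Next I would analyse the structure of $J$. Because $F(a,b)=v(a)+Du(v(a))^*(b)$, the derivative $\partial_b F=Du(v(a))^*$ is a constant isometry $\mathbb{R}^m\to V_{\mathcal{S}_a}$, so $(\partial_bF)^T(\partial_b F)=I_m$ and $(\partial_bF)(\partial_bF)^T=P_{V_{\mathcal{S}_a}}$. A short block-determinant computation then yields
\begin{equation*}
J(y)=\bigl|\det B(b(y))\bigr|,\qquad B(b)=P_{V_{\mathcal{S}_a}}^{\perp}\,\partial_aF(a,b),
\end{equation*}
a map $\mathbb{R}^{n-m}\to V_{\mathcal{S}_a}^{\perp}$ which is \emph{affine} in $b$: $B(b)=B_0+L(b)$ with $L$ linear in $b$. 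In particular $\log J$ is real-analytic along the leaf, and $J>0$ throughout $\mathrm{int}\mathcal{S}$. Applying the differentiation formulas~\eqref{eqn:formula} and~\eqref{eqn:formula2} to $R(t):=\log|\det B(b(y_0)+tw)|$ gives, since $d^2B/dt^2=0$, that $R'(t)=\mathrm{tr}(B^{-1}\dot B)$ and $R''(t)=-\mathrm{tr}((B^{-1}\dot B)^2)$; the Cauchy--Schwarz inequality for the trace of an $(n-m)\times(n-m)$ matrix then yields the key Riccati estimate
\begin{equation*}
(\log J)''(t)+\frac{((\log J)'(t))^2}{n-m}\le 0\qquad\text{along every line in }\mathrm{int}\mathcal{S}.
\end{equation*}

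Now couple this with the full-space $CD(\kappa,N)$ hypothesis, which (since $\mathrm{Ric}_{\mathbb{R}^n}=0$) reads $\rho''(t)-\tfrac{\rho'(t)^2}{N-n}\ge\kappa$. Adding this to the Riccati estimate gives
\begin{equation*}
\tilde\rho''(t)\;\ge\;\kappa+\frac{\rho'(t)^2}{N-n}+\frac{((\log J)'(t))^2}{n-m},
\end{equation*}
so the proof is complete once we verify
\begin{equation*}
\frac{\rho'(t)^2}{N-n}+\frac{((\log J)'(t))^2}{n-m}\;\ge\;\frac{\tilde\rho'(t)^2}{N-m},
\end{equation*}
which, with $x=\rho'(t)$, $y=(\log J)'(t)$, $a=N-n$, $b=n-m$, is precisely Lemma~\ref{lem:tri} (note $a+b=N-m$ and $x-y=\tilde\rho'(t)$). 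The hypotheses of that lemma ($b>0$ and $a\notin[-b,0]$) translate into $n>m$ (otherwise there is nothing to prove) and $N\notin[m,n]$, which holds because $N<1\le m$ or $N>n$.

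The remaining cases are the degenerate endpoints allowed by the theorem, and they are handled by hand. For $N=\infty$ the correction terms drop out and the claim $\tilde\rho''\ge\kappa$ follows directly from $\rho''\ge\kappa$ together with $-(\log J)''\ge 0$. For $N=n$, the definition forces $\rho$ to be constant, so $\tilde\rho=-\log J+\mathrm{const}$; the Riccati inequality then gives $\tilde\rho''-\tfrac{\tilde\rho'^2}{n-m}\ge 0\ge\kappa$, where $\kappa\le 0$ is forced by $\mathrm{Ric}_{\mathbb{R}^n}=0\ge\kappa g$. The main obstacle is precisely step four: matching the quadratic Bakry--\'Emery corrections across dimensions $n$, $n-m$ and $N-m$; this is exactly the reason the Euclidean inequality of Lemma~\ref{lem:tri} was set up, and it dictates the admissible range $N\in(-\infty,1)\cup[n,\infty]$. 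Everything else is routine: the affine dependence of $B$ on $b$ and the smoothness of $\rho$ ensure $\tilde\rho$ is smooth on the interior of each leaf, so the pointwise Bakry--\'Emery inequality makes sense on $\nu$-almost every $m$-dimensional leaf and passes in that form to $CD(\kappa,N)$.
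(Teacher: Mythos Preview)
Your approach is essentially the same as the paper's: identify the conditional density as $e^{-\rho}J$, differentiate $\log J$ along the leaf using \eqref{eqn:formula}--\eqref{eqn:formula2}, obtain the Riccati inequality, and couple it with the ambient $CD(\kappa,N)$ via Lemma~\ref{lem:tri}. The block-determinant reduction and the treatment of the endpoint cases $N=n$, $N=\infty$ match the paper as well.

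There is, however, one genuine gap. Your Riccati step asserts
\[
(\mathrm{tr}\,M)^2\le (n-m)\,\mathrm{tr}(M^2),\qquad M:=B^{-1}\dot B,
\]
by ``Cauchy--Schwarz for the trace of an $(n-m)\times(n-m)$ matrix''. This inequality is \emph{false} for general square matrices (e.g.\ $M=\bigl(\begin{smallmatrix}0&1\\-1&0\end{smallmatrix}\bigr)$ has $\mathrm{tr}(M^2)=-2<0$); it holds only when $M$ has real eigenvalues, for instance when $M$ is similar to a symmetric matrix. You never check this, and it does not follow from the affinity of $b\mapsto B(b)$ alone. The paper fills exactly this gap: on each $T_{pij}^{\lambda,\rho}$ the derivative $Du$ is Lipschitz (Corollary~\ref{col:strength}), so by the Whitney extension theorem one replaces $u$ by a $C^{1,1}$ map $\tilde u$ whose second derivative exists a.e.\ and is \emph{symmetric} (the paper invokes \cite[Lemma~3.12]{Klartag} for the a.e.\ symmetry). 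Writing $B(b)=H(b)\,Dv(a)$ with $H(b)=\mathrm{Id}+P^{\perp}D^2u(v(a))^*(P^{\perp}\cdot)(b)$, one sees that $H(b)$ is a \emph{symmetric} operator on $V^{\perp}$ (this is precisely where the symmetry of $D^2u$ is used), hence $B^{-1}\dot B$ is conjugate via $Dv(a)$ to $H(b)^{-1}\dot H$, which is in turn conjugate via $H(b)^{1/2}$ to the symmetric operator $H(b)^{-1/2}\dot H\,H(b)^{-1/2}$. Only then does Cauchy--Schwarz on the (real) eigenvalues give $(\mathrm{tr}\,M)^2\le(n-m)\,\mathrm{tr}(M^2)$. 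Without this symmetry argument your Riccati inequality, and hence the $CD(\kappa,N)$ conclusion, is unjustified; note the same issue recurs in your endpoint cases, since both rely on $-(\log J)''=\mathrm{tr}(M^2)\ge 0$.
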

\begin{proof}
Let us fix a cluster $T_{pij}$. Note that by Theorem \ref{thm:disinteg} the density of the conditional measures $\mu_{\mathcal{S}}$ for a leaf of dimension $m$ is equal to
\begin{equation*}
\frac{d\mu_{\mathcal{S}}}{d\mathcal{H}_m}=c (J_nF)\circ Ge^{-\rho}\mathbf{1}_{\mathcal{S}},
\end{equation*}
where $c$ is a positive normalising constant. Here $J_nF$ denotes the Jacobian of $F$.
Recall that $F,G$ are given by the formulae
\begin{equation*}
F(a,b)=v(a)+Du(v(a))^*(b)
\end{equation*}
and
\begin{equation*}
G(x)=(w(z),u(x)-u(z)),
\end{equation*}
where $w\colon \mathbb{R}^n\to\mathbb{R}^{n-m}$ and $v\colon \mathbb{R}^{n-m}\to\mathbb{R}^n$ are maps from Lemma \ref{lem:cover}, see also Lemma \ref{lem:efge} for details. Let us recall that $v(a)\in S_p^i$ for all $a\in\Lambda$. Here 
\begin{equation*}
\Lambda=\{a\in\mathbb{R}^{n-m}|(a,0)\in G(\mathrm{int}T_{pij})\}.
\end{equation*}
 It follows by the definition of $S_p$ that $u(v(a))=p$ for all $a\in\Lambda$. Recall, that by Lemma \ref{lem:diff}, $u$ is differentiable in $\mathrm{int}T_{pij}$. Thus, if $b\in\mathbb{R}^m$ is such that pair $(a,b)\in G(\mathrm{int}T_{pij})$ then 
\begin{equation}\label{eqn:compo}
Du(v(a))Dv(a)=0\text{ for almost every }a\in\Lambda.
\end{equation}
Note that, by Corollary \ref{col:strength}, see also Lemma \ref{lem:efge}, on $T_{pij}^{\lambda,\rho}$, $Du$ is Lipschitz. By the Whitney extension theorem there exists a differentible map $\tilde{u}$ with Lipschitz derivative on $\mathbb{R}^n$ that coincides with $u$ on $T_{pij}^{\lambda,\rho}$ and such that $D\tilde{u}=Du$ on $T_{pij}^{\lambda,\rho}$. By a lemma from \cite[Lemma 3.12]{Klartag}, the second derivative of $\tilde{u}$ exists almost everywhere and is symmetric, in the sense that the second derivative of any of its components is symmetric. We will abuse the notation and assume that $u$ has Lipschitz derivative. 

The derivative of $F$ is equal to
\begin{equation*}
DF(a,b)=[Dv(a)+D^2u(v(a))^*(Dv(a)(\cdot))(b),Du(v(a))^*].
\end{equation*}
Note that for any vectors $z\in\mathbb{R}^{n-m}$ and $w\in\mathbb{R}^m$ the derivatives $Dv(a)z$ and $Du(v(a))^*w$ are orthogonal.
Indeed, by (\ref{eqn:compo}),
\begin{equation*}
\Big\langle Du(v(a))^*(w),Dv(a)(z)\Big\rangle=\Big\langle w, Du(v(a))Dv(a)(z)\Big\rangle=0.
\end{equation*}
Let $P$ denote the orthogonal projection onto the tangent space of the leaf containing $v(a)$. Then $Du(v(a))=TP$, see Lemma \ref{lem:diff} and Lemma \ref{lem:efge}.
Let $P^{\perp}$ denote the orthogonal projection onto its orthogonal complement. Then
\begin{equation*}
DF(a,b)=[Dv(a)+D^2u(v(a))^*(P^{\perp}Dv(a)(\cdot))(b),Du(v(a))^*].
\end{equation*}
Therefore, as $Du(v(a))^*$ is isometric, we have
\begin{equation*}
\abs{\det(DF(a,b))}=\Big\lvert\det\Big(Dv(a)+P^{\perp}D^2u(v(a))^*(P^{\perp}Dv(a)(\cdot))(b)\Big)\Big\rvert,
\end{equation*}
which is equal to 
\begin{equation*}
\abs{\det Dv(a)}\Big\lvert\det\Big(\mathrm{Id}+P^{\perp}D^2u(v(a))^*(P^{\perp}(\cdot))(b)\Big)\Big\rvert.
\end{equation*}
Note that 
\begin{equation*}
H(b)=\Big(\mathrm{Id} +P^{\perp}D^2u(v(a))^*(P^{\perp}(\cdot))(b)\Big)
\end{equation*}
is a linear operator on the image of $P^{\perp}$, which is of dimension $n-m$. Moreover it is symmetric and invertible for any $b$ such that $(a,b)\in G(\mathrm{int}T_{pij})$, as $F$ is bijection. Consider for some $b'\in\mathbb{R}^m$
\begin{equation*}
P^{\perp}D^2u(v(a))^*(P^{\perp}(\cdot))(b').
\end{equation*}
Let $A$ be such that
\begin{equation*}
P^{\perp}D^2u(v(a))^*(P^{\perp}(\cdot))(b')=A\Big(\mathrm{Id} +P^{\perp}D^2u(v(a))^*(P^{\perp}(\cdot))(b)\Big).
\end{equation*}
Then $A$ is conjugate to a symmetric operator of rank at most $n-m$, as
\begin{equation*}
H(b)^{-\frac12}AH(b)^{\frac12}=H(b)^{-\frac12}P^{\perp}D^2u(v(a))^*(P^{\perp}(\cdot))(b')H(b)^{-\frac12}.
\end{equation*}
In consequence, by the Cauchy-Schwarz inequality
\begin{equation}\label{eqn:bac}
(trA)^2\leq (n-m)tr(A)^2.
\end{equation}

Let $x=F(a,b)$ and note that any $v$ in the tangent space of $\mathcal{S}$ is of the form $v=Du(v(a))^*(b')$ for some $b'\in\mathbb{R}^m$.  Then
\begin{align*}
D\log\abs{\det DF\circ G}(x)(v)&=\frac{d}{dt}\log \abs{\det\big( DF(G(F(a,b)+tDu(v(a))^*(b')\big)}=\\
&=\frac{d}{dt}\log\abs{\det (DF(a,b+tb'))}=\frac{d}{dt}\log\det H(b+tb').
\end{align*}
Therefore, by (\ref{eqn:formula}) and (\ref{eqn:formula2}),
\begin{equation*}
D\log\abs{\det DF\circ G}(x)(v)=tr\Big(H(b)^{-1}P^{\perp}D^2u(v(a))^*(P^{\perp}(\cdot))(b')\Big)=trA
\end{equation*}
and
\begin{equation*}
D^2\log\abs{\det DF\circ G}(x)(v,v)=-tr\Big(H(b)^{-1}P^{\perp}D^2u(v(a))^*(P^{\perp}(\cdot))(b')\Big)^2=-tr(A^2).
\end{equation*}
By (\ref{eqn:bac}) and by Lemma \ref{lem:tri}, if $N\notin [m, n]$, then
\begin{align*}
-D^2 \log\abs{\det DF\circ G}(v,v)&=tr(A^2)\geq \\
&\geq\frac{1}{n-m}(trA)^2\geq \frac1{N-m}(D\rho (v)-tr A)^2-\frac{(D\rho(v))^2}{N-n}.
\end{align*}
Note that by the assumption for all $v\in\mathbb{R}^n$
\begin{equation*}
D^2{\rho}(v,v)-\frac{D\rho(v)^2}{N-n}\geq \kappa \norm{v}^2.
\end{equation*}
Thus for all $v$ in the tangent space of $\mathcal{S}$ there is
\begin{equation*}
D^2\rho(v,v)-D^2 \log\abs{\det DF\circ G}(v,v)-\frac{\big(D\rho(v)-D(\log\abs{\det DF\circ G})(v)\big)^2}{N-m}\geq \kappa \norm{v}^2.
\end{equation*}
We infer that $(\mathrm{int}\mathcal{S},\norm{\cdot},\mu_{\mathcal{S}})$ satisfies the curvature-dimension condition $CD(\kappa,N)$, provided that $N\notin[m,n]$.

If $N=n$, then $\rho$ is required to be a constant function, and thus in this case the inequality is also satisfied. If $N=\infty$, then the estimates are trivial.
\end{proof}
For the historical remarks on similar estimates we refer to \cite{Klartag}. 

\bibliographystyle{plain}
\bibliography{biblio}

\label{lastpage}
\end{document}